  \newtheorem{theorem}{Theorem}[chapter]
  \newtheorem{lemma}[theorem]{Lemma}
    \newtheorem{problem}{Problem}
  \newtheorem{conj}{Conjecture}
  \theoremstyle{definition}
  \newcommand{\blanknonumber}{\newpage\thispagestyle{empty}}
\begin{document}

  \frontmatter



\begin{center}
\thispagestyle{empty}
\vspace*{\fill} \Huge
                        \textsc{Explicit Estimates \\ in the Theory of \\ Prime Numbers}
\\
\vfill\vfill\Large
                          Adrian William Dudek
\\
\vfill\vfill
                          September 2016
\\
\vfill\vfill \normalsize
         A thesis submitted for the degree of Doctor of Philosophy\\
         of the Australian National University
\vfill
         \includegraphics[scale=0.24]{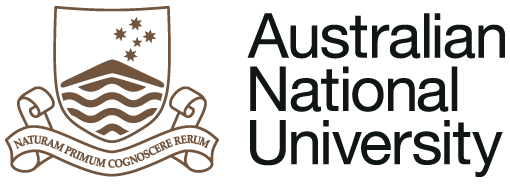}
         
         \vfill
         
      \copyright \ Copyright by Adrian William Dudek 2016 \\ All Rights Reserved

\end{center}

\blanknonumber\ \blanknonumber

\vspace*{\fill}

\begin{center}\emph{
For Sarah and Theodor
}
\end{center}

\vfill\vfill\vfill

\newpage 

\hspace{1in} \thispagestyle{empty}

\newpage

\hspace{1in} \vspace{2in}
\thispagestyle{empty}
\begin{center} \emph{
If you peek and pry where the prime numbers lie,\\
you had better prepare for confusion, \\
for of those who have tried, just a few are inside \\
of a world so unfit for intrusion.\\
\hspace{1in} \\
You see, prime numbers play in a curious way, \\
with a fashion set far from your fielding,\\
but you're welcome to learn through the trials of your turn, \\
any tool that you wield is unyielding.\\
\hspace{1in} \\
Thus with wonder we tread, through conjectures unfed, \\
of a market whose merchants seem yonder,\\
but each rule I have found thus enforces the bound\\
that we only see what we ponder.\\
}
\end{center}

\thispagestyle{empty}

  \setlength{\parskip}{16pt}
    \setlength{\parindent}{0em}

\chapter*{Declaration}\label{declaration}
\thispagestyle{empty}
I hereby declare that the material in this thesis is my own original work except where otherwise stated.

The material in Chapter 2 and in Section 3.1 has been accepted for publication~\cite{dudekcubes} in \textit{Functiones et Approximatio, Commentarii Mathematici}.

The material in Section 3.2 has been published \cite{dudekcramer} in the \textit{International Journal of Number Theory}.

The material in Section 4.1 has been published \cite{dudekestermann} in the \textit{Ramanujan Journal}. 

The material in Section 4.2 is joint work with Dave Platt of the University of Bristol, and has been published \cite{dudekplatt2} in the \textit{LMS Journal of Computation and Mathematics}.

The material in Chapter 5 is again joint work with Dave Platt and has been published \cite{dudekplattem} in \textit{Experimental Mathematics}.

\vspace{1in}

\hfill\hfill\hfill
Adrian W. Dudek
\hspace*{\fill}

  \setlength{\parskip}{16pt}

\chapter*{Acknowledgements}\label{acknowledgements}
\addcontentsline{toc}{chapter}{Acknowledgements}
  \setlength{\parindent}{0em}

It is no small thing, the gratitude I have for my supervisor Dr. Tim Trudgian. In fact, a significant effort was required to keep this thesis from becoming a treatise on his brilliance as a mentor. Before I had even started my PhD, Tim had quite the hold on my email inbox, providing me with judicious nuggets of advice and banter of the highest quality. When I became his student, he offered me the opportunity of weekly one hour meetings, and I can not stress the central part that this played in my candidature. I would always leave our meetings feeling grounded, determined and with plenty to chew on in the lead-up to the next bout of Titchmarsh-and-tea.

Importantly, under Tim's invariably-tweed wing, I was able to learn all about analytic number theory, and whilst this is seemingly displaced from the aspirations of most, this was all I wanted to do when I was twenty one. Tim also talked about cricket quite a lot, though none of this seems to have stuck.

A warm and very gracious thanks must go to my collaborators Lo\"{\i}c Greni\'{e}, Giuseppe Molteni, \L ukasz Pa\'nkowski, Dave Platt and Victor Scharaschkin. It was always a treat to wake up to an inbox full of clever ideas and well wishes, and it was a welcome challenge for me to try and match each of you in our joint work. May our collaborations continue well into the future.

I would also like to acknowledge the Australian Government for the financial support provided by the Australian Postgraduate Award, Dr. Tim Trudgian (again) for affording to me an ANU Supplementary Scholarship, the Science and Industry Endowment Fund for sponsoring my flight to Heidelberg, the International Alliance of Research Universities for funding my trip to the UK, the Australian Mathematical Society for financial support on several occasions to attend their yearly meeting, the Australian Mathematical Sciences Institute for funding my participation in both their Winter School and the Number Theory Down Under Meeting, and the University of Milan for supporting me as a visiting researcher. All of these contributions have been significant in making my candidature both productive and enjoyable.

The Mathematical Sciences Institute at the Australian National University has been a terrific fort, providing me with a stimulating atmosphere, a peaceful office, a wealth of knowledgeable colleagues and a travel fund. Moreover, its location allowed for many leafy ambles and rounds of lunchtime soccer.

On a heartfelt note, my wife-to-be Sarah allowed herself to be neatly packed up and brought from Perth to Canberra for three years just so I could learn about prime numbers. Surpassing greatly, however, her innate ability to be well-bundled and transported, has been her love and support, challenging but considerate advice, happiness during hard times and, quite importantly, her grounded non-mathematician perspective.

Finally, in advance, I would like to acknowledge and thank the examiners of this thesis. I hope that you find some interesting results within.


\chapter*{Abstract}\label{abstract}

It is the purpose of this thesis to enunciate and prove a collection of explicit results in the theory of prime numbers.

First, the problem of primes in short intervals is considered. We furnish an explicit result on the existence of primes between consecutive cubes. To prove this, we first derive an explicit version of the Riemann--von Mangoldt explicit formula. We then assume the Riemann hypothesis and improve on the known conditional explicit estimates for primes in short intervals. 

Using recent results on primes in arithmetic progressions, we prove two new results in additive number theory. First, we prove that every integer greater than two can be written as the sum of a prime and a square-free number. We then work similarly to prove that every integer greater than ten and not congruent to one modulo four can be written as the sum of the square of a prime and a square-free number. 

Finally, we provide new explicit results on an arcane inequality of Ramanujan. We solve the inequality completely on the assumption of the Riemann hypothesis.

\

  \setlength{\parskip}{0cm}

  \tableofcontents\blanknonumber

  \mainmatter

  

  \setlength{\parskip}{16pt}
  
\chapter{Introduction}
\label{chapter1}

\begin{quote}
\textit{The White Rabbit put on his spectacles. ``Where shall I begin, please your Majesty?'' he
asked.\\
``Begin at the beginning,'' the King said, very gravely, ``and go on till you come to the end: then stop.''} 
\flushright -- Lewis Carroll, \textit{Alice's Adventures in Wonderland}
\end{quote}

It is the author's hope that this introduction will be readable to any person acquainted with a good level of mathematics. As such, one should find that some effort has been put in to banish the technical particulars to later chapters so as not to obscure the basic nature of the work. 

This thesis details the recent period of research undertaken by the author; essentially, we present a collection of new results in the theory of prime numbers. Most of these results have already found their place in the literature through their publication in scholarly journals. Therefore, the primary purpose of this thesis is to present this suite of results cohesively in the context of modern number theory. One cannot do this, of course, until the setting has been drawn, and so the execution of this first chapter is as follows. We will provide an abridged history of the theory of prime numbers before describing the author's contributions.

\section{An abridged history of the prime numbers}

Number theory is an ancient branch of mathematics centred around the positive integers:
$$1,2,3,4,5,6,7,8,9,10,11,12,13,\ldots$$
The allure, perhaps, that this field of study holds for mathematicians and amateurs alike, is that it allows one to formulate and test a hypothesis with apparent ease. That is, one possessing little formal background may still conduct seemingly effective investigations into the properties of numbers. However, much difficulty often ensues when one resolves to prove one's conjecture in a rigorous way. Such a claim can be supported by first introducing the sequence of prime numbers:
$$2,3,5,7,11,13,17,19,23,29,31,37,41,43, \ldots$$
These are the numbers that are greater than one and are not perfectly divisible by any other positive integer except for one and themselves. And so, the prime numbers have now been defined precisely, but if one were to, say, ask for a mathematical formula which identifies the prime numbers in an efficient way, one would be disappointed by an unimpeachable lack of returns. Put simply, the primes seem to avoid the good behaviour that we have come to expect of straightforwardly constructed sequences of numbers. 

One might also suppose it reasonable that the number one should appear in the above list. The short response to this is that it used to be included, but was at some point cleaved from the list in Pluto-esque fashion for the sake of convenience\footnote{It is not entirely clear when this happened, though we note that the number one is listed as a prime in the tables of Lehmer \cite{lehmerlist} and in Northrop's book of paradoxes \cite{northrop}. The article by Caldwell and Xiong \cite{caldwellxiong} discusses this point further in some great detail.}. The typical argument that one puts forward in order to justify such a divorce is to mention the \textit{fundamental theorem of arithmetic}, which states that every positive integer greater than one is either a prime or a \textit{unique} product of primes. For example, one may decompose 60 into a unique product of primes \textit{viz.}
$$60 = 2 \times 2 \times 3 \times 5$$
and this is the only way which one may do so, ignoring swapping the order of multiplication. Now, if the number one were to find itself in the list of primes, then we could write
$$60 = 2 \times 2 \times 3 \times 5 \times 1 \times \cdots \times 1$$
with any count of ones on the tail, and this would disrupt any sort of uniqueness. Therefore, by omitting the number one from the list, the positive integers are uniquely composed of primes through multiplication. 

The next property that can be made of the primes is that there are infinitely many of them. This statement was first proven by Euclid of Alexandria \cite{euclid} around 300BC, and took form as Proposition 20 in the ninth book of Euclid's \textit{Elementa}. 

\begin{theorem}
There are infinitely many prime numbers.
\end{theorem}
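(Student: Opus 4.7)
The plan is to follow the classical argument attributed to Euclid, proceeding by contradiction and using the fundamental theorem of arithmetic that was just discussed in the surrounding text. First I would suppose, for the sake of contradiction, that there are only finitely many prime numbers, and list them as $p_1, p_2, \ldots, p_n$. The key construction is then to form the integer
\[
N = p_1 p_2 \cdots p_n + 1,
\]
which is strictly greater than $1$ and therefore, by the fundamental theorem of arithmetic, must either be prime itself or factor into a product of primes.

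Next I would argue that $N$ cannot be divisible by any of the primes $p_i$ in our finite list: dividing $N$ by $p_i$ leaves a remainder of $1$, since $p_i$ divides $p_1 p_2 \cdots p_n$ but does not divide $1$. Consequently, any prime factor of $N$ (and at least one exists, since $N > 1$) must lie outside the list $\{p_1, \ldots, p_n\}$. This directly contradicts the assumption that the list enumerates all primes, so the supposition of finiteness must be rejected.

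The argument is essentially routine and there is no serious obstacle; the only delicate point is making sure that one is entitled to invoke the fundamental theorem of arithmetic, and to conclude that some prime must divide $N$. Since the theorem has already been discussed in the text immediately above, this is entirely justified. I would keep the proof short and close to Euclid's original formulation, noting at the end that the same construction in fact yields a constructive procedure: from any finite collection of primes one can always exhibit a prime not in that collection, so no finite list can exhaust them.
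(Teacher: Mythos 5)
Your proof is correct and takes essentially the same approach as the paper's: both are the classical Euclidean contradiction argument, forming the product of the assumed-finite list of primes plus one and observing that this number must have a prime factor outside the list. The only cosmetic difference is that the paper phrases the key step as "any two consecutive integers can have no prime factors in common," whereas you argue directly via the remainder upon division by each $p_i$; these are the same observation.
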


\begin{proof}
Assume that there are only finitely many prime numbers that we can list as $p_1, p_2, \ldots, p_k$. If we let $N=p_1 \cdots p_k$ be the product of all the primes, then it is clear that $N+1$ must be divisible by a prime that we have not listed, for any two consecutive integers can have no prime factors in common. This contradicts our assumption that there are finitely many primes.
\end{proof}

It should be noted that this is not Euclid's original proof; actually, Euclid simply delivers a rigorous proof for the case $k=3$, and so kindly introduces the well-adopted practice of leaving the remaining cases as a homework exercise.

It is the author's intention at this early time to give a new proof of Euclid's theorem. This exercise has been done in abundance (see the book of Narkiewicz \cite[Ch.\ 1]{narkiewicz} for a collection of such proofs), the point frequently being some contradiction arising from the assumption of having only a finite set of primes. Here, we reproduce Euler's contradiction that the harmonic series (incorrectly) converges, though we do so through humbler means. 

The reader should know two things in order to follow the proof. First, we say that a number is square-free if it is not divisible by the square of any prime number. As such, one can collect all square-free numbers simply by taking products of distinct primes. One also needs the familiar fact that the harmonic series
$$\sum_{n=1}^{\infty} \frac{1}{n} = 1 + \frac{1}{2} + \frac{1}{3} + \frac{1}{4} + \cdots$$
diverges to infinity. 

\begin{proof}
We begin by assuming that there are only finitely many prime numbers, from which it follows immediately that there are only finitely many square-free numbers. From here, we shall use $A$ to denote the set of square-free numbers.

Thus, it is clear that every sufficiently large integer is divisible by the square of some integer. This means that for any positive integer $N$, the set $\{1,2,3,\ldots,N\}$ can be covered by taking the union of $A$ with all multiples of squares that do not exceed $N$. From this, we obtain an upper bound for the $N$-th harmonic number:
$$\sum_{n \leq N} \frac{1}{n} \leq \sum_{a \in A} \frac{1}{a} + \bigg(\frac{1}{2^2} + \frac{1}{3^2} + \frac{1}{4^2} + \cdots\bigg) \bigg(1 + \frac{1}{2} + \frac{1}{3} + \cdots + \frac{1}{N}\bigg).$$ 
Importantly, term-by-term expansion on the two brackets will recover a series that includes the reciprocals of numbers that are not square-free and do not exceed $N$. The inequality is somewhat weak, in the sense that we will recover many numbers that are greater than $N$. We can bound the sum of the reciprocals of the nontrivial squares as follows:
$$\sum_{m=2}^{\infty} \frac{1}{m^2} < \frac{1}{2^2} + \sum_{m=3}^{\infty} \frac{1}{m(m-1)} = \frac{1}{4} + \sum_{m=3}^{\infty} \bigg( \frac{1}{m-1} - \frac{1}{m}\bigg) = \frac{3}{4}.$$
Therefore, we have upon rearranging that
$$\frac{1}{4} \sum_{n \leq N} \frac{1}{n} < \sum_{a \in A} \frac{1}{a}$$ for all integers $N$. This contradicts the divergence of the harmonic series for some sufficiently large integer $N$ and thus completes the proof.
\end{proof}

In 1737, Euler reproved Euclid's theorem through the remarkable identity
\begin{equation} \label{eulerproduct}
\sum_{n=1}^{\infty} \frac{1}{n^{\sigma}} = \prod_{p} (1-p^{-\sigma})^{-1},
\end{equation}
where $\sigma$ is a real number satisfying $\sigma> 1$ and the product is over all prime numbers. To verify this informally, one can subject each member of the product to the Taylor expansion
$$\frac{1}{1-x} = 1 + x + x^2 + x^3 + \cdots$$
which holds for $|x| < 1$, and then expand the result; equality will immediately follow from the fundamental theorem of arithmetic. This argument is not airtight (see Titchmarsh \cite[Ch.\ 1]{titchmarsh1986theory} for a rigorous proof), but it will suffice to impart the main idea, namely that (\ref{eulerproduct}) can be thought of as an analytic version of the fundamental theorem of arithmetic, for we have encapsulated this property of the natural numbers into an identity which holds for all $\sigma >1$. The infinitude of primes follows from taking the limit as $\sigma \rightarrow 1^+$; the left hand side of (\ref{eulerproduct}) tends to infinity, and so we deduce that the product on the right must also be infinite, and thus composed of infinitely many parts.

In fact, Euler went somewhat further than this. If we let $\sigma>1$, then taking the logarithm of both sides in (\ref{eulerproduct}) gives us that
\begin{eqnarray*}
\log \bigg(\sum_n \frac{1}{n^{\sigma}}\bigg) & = &  - \sum_p \log (1-p^{-\sigma}). 
\end{eqnarray*}
From an application of the Taylor series
$$-\log(1-x) = \sum_{n=1}^{\infty} \frac{x^n}{n}$$
which holds for all $|x| < 1$, it follows that
\begin{eqnarray*}
\log \bigg(\sum_n \frac{1}{n^{\sigma}}\bigg) & = &  \sum_p \bigg( \frac{1}{p^{\sigma}} + \frac{1}{2p^{2{\sigma}}} + \frac{1}{3p^{3{\sigma}}}+ \cdots \bigg) \\
& = & \sum_p  \frac{1}{p^{\sigma}}  +  \sum_p \frac{1}{p^{2{\sigma}}} \bigg(\frac{1}{2} + \frac{1}{3p^{\sigma}} + \frac{1}{4p^{2{\sigma}}}+\cdots \bigg) \\
& < & \sum_p  \frac{1}{p^{\sigma}}  +  \sum_p \frac{1}{p^{2{\sigma}}} \bigg(1 + \frac{1}{p^{{\sigma}}} +\frac{1}{p^{2{\sigma}}} + \cdots \bigg).
\end{eqnarray*}
It is straightforward to prove that the rightmost sum is bounded above by some large positive number $C$. It follows that
\begin{eqnarray*}
 \sum_p  \frac{1}{p^{\sigma}}  >  \log \bigg(\sum_n \frac{1}{n^{\sigma}}\bigg) - C, 
\end{eqnarray*}
and so as $\sigma \rightarrow 1$, the right hand side tends to infinity, impressing the sum over the primes towards infinity as well. This is a stronger result than the infinitude of primes.

It is convenient at this point to introduce a modern notation. We will let $\pi(x)$ denote the number of primes that do not exceed $x$, and we will henceforth refer to this as the \textit{prime counting function}. This allows us to recognise Euclid's theorem in the form
$$\lim_{x \rightarrow \infty} \pi(x) = \infty.$$
It is a central problem in number theory to resolve the function $\pi(x)$ to a greater detail. In 1808, Legendre \cite{legendre} asserted that the function $\pi(x)$ is approximately equal in value to
$$\frac{x}{\log x - A}$$
for large values of $x$ and $A = 1.08$. Gauss proposed a similiar though different formula, specifically that $\pi(x)$ should be approximated by the so-called offset logarithmic integral
$$\text{Li}(x) : = \int_2^x \frac{du}{\log u}.$$
Such an observation was communicated from Gauss to Encke in 1849, but Gauss' work towards this statement seemed to have commenced as early as 1791, when he was a boy at the age of fourteen. In fact, it is said that around this early time, the Duke of Brunswick gave Gauss a collection of mathematics books; these included tabulations of logarithmic values. Gauss worked to improve these tables, in fact, a computation of $\log(10037)$ can be found in his papers from the era. It has been said (see, for example, Tschinkel \cite{tschinkel}) that his fervent work on these tables probably led him to the statement that $\pi(x)$ and $\text{Li}(x)$ are approximately equal in value.

One can expand Legendre's estimate via a Taylor series to get
$$\frac{x}{\log x - A} = \frac{x}{\log x} + \frac{A x}{\log^2 x} + \frac{A^2 x}{\log^3 x} + \frac{A^3 x}{\log^4 x} + \cdots$$
whereas integration by parts gives
$$\int_2^x \frac{du}{\log u} = \frac{x}{\log x} + \frac{x}{\log^2 x} + \frac{2 x}{\log^3 x} + \frac{6 x}{\log^4 x} + \cdots$$
for the estimate of Gauss. Therefore, both estimates are of the form
$$\frac{x}{\log x} + O \bigg(\frac{x}{\log^2 x}\bigg)$$
and so it at least seemed plausible that the function $x/ \log x$ was a good first approximation for the function $\pi(x)$. Chebyshev \cite{chebyshev} made the first serious move towards a proof of this. In 1848, he proved the result that if the limit
\begin{equation} \label{chebyshevintegral}
\lim_{x \rightarrow \infty} \frac{\pi(x)}{x/\log x}
\end{equation}
exists then it must be equal to one. Famously, he was also able to prove the inequality
$$\frac{a}{\log x} < \pi(x) < \frac{Ax}{\log x}$$
for certain positive numbers $a$ and $A$ and all sufficiently large values of $x$. Specifically, he showed that one could take $a=0.921$ and $A=1.106$. Historically, there have been some attempts to refine these values using variations of Chebyshev's method, though it seems clear that this line of attack falls short of showing that the limit in (\ref{chebyshevintegral}) tends to one.

It should be said that Chebyshev's work allowed him to prove a theorem that is well-known as Bertrand's postulate. This states that for any $x > 1$ there exists a prime number $p$ that satisfies $x < p < 2x$. We will return to this result soon, for it is a good starting point with respect to some of the work of this thesis.

In 1859, Riemann published his only paper \cite{riemann} in number theory, enunciating a new way to investigate the prime numbers. He defined, for all complex numbers $s$ in the half-plane $\text{Re}(s)>1$, the complex-valued function 
$$\zeta(s) =\sum_{n=1}^{\infty} \frac{1}{n^s},$$
which we hereby refer to as the Riemann zeta-function. It should be noted that we employed this function earlier to prove that the series of reciprocal primes diverges, though we considered only the case where $s$ was real. 

Now treating $s$ as a complex variable, Riemann indicated how $\zeta(s)$ could be extended to a regular (single-valued and analytic) function everywhere except for a simple pole at $s=1$. Moreover, he stated that the zeroes of $\zeta(s)$ are inextricably connected to the distribution of the prime numbers, and sketched a proof that 
\begin{equation} \label{PNT}
\lim_{x \rightarrow \infty} \frac{\pi(x)}{x/ \log x}=1
\end{equation}
amongst other things. These blueprints were finally completed close to thirty years later by Hadamard \cite{hadamard} and by de la Vall\'{e}e Poussin \cite{poussin}, who finished their work independently at around the same time. It is to these two mathematicians, building on the foundation laid by Riemann and the others before him, that we attribute the proof of the Prime Number Theorem, that is, the confirmation of the limit in (\ref{PNT}).

As mentioned earlier, such a result is elicited from the behaviour of the zeroes of the Riemann zeta-function. Now, it can be shown with some work (see Titchmarsh \cite[Ch.\ 2]{titchmarsh1986theory}) that any zero $\rho$ of $\zeta(s)$ will either be a negative even integer or will be constrained to lie in the so-called critical-strip 
$$0\leq\text{Re}(\rho)\leq 1.$$ 
It is precisely these nontrivial zeroes which lie in the critical strip that form our focus, for they weave into the zeta-function all information regarding the distribution of the primes. 

Hadamard's proof of the Prime Number Theorem follows from showing that $0 < \text{Re}(\rho) < 1$, that is, by whittling down the width of the critical strip albeit infinitesimally. However, de la Vall\'{e}e Poussin was more precise with his workings. Specifically, he proved that there exists $c>0$ and $K >1$ such that
$$\frac{c}{\log \gamma} \leq \beta \leq 1 - \frac{c}{\log \gamma}$$
for all $\gamma > K$. It can, in fact, be deduced from this that
$$| \pi(x) - \text{Li}(x)| < \text{Li}(x) e^{- \sqrt{A \log x}}$$
for some $A>0$ and all sufficiently large $x$. This proves that the logarithmic integral proposed by Gauss is the correct model to hold against the prime counting function. 

At this point, we have given a brief overview of the development of the Prime Number Theorem, and we have hinted at some of the interplay between zeroes and primes. In the last century, number theory has burgeoned incredibly, and it would be an insurmountable task to provide a decent survey in this section. This is, however, done in an impressively thorough fashion by S\'{a}ndor, Mitrinovi\'{c} and Crstici \cite{sandor}. We will, instead, discuss the branches of the theory that have been the subject of the author's investigations. Before we do this, however, it is necessary to provide some material on explicit results.

\section{Motivation for explicit results}

It is clear that many results in number theory, such as de la Vall\'{e}e Poussin's proof of the Prime Number Theorem, are stated with the use of implied constants, where one encounters qualifiers such as ``sufficiently large''. There are, in fact, three types of results that we see in number theory:
\begin{enumerate}
\item An \textit{ineffective} result shows that a statement is true for an unspecified constant $C$, but one cannot actually determine the constant $C$ by reworking the proof and keeping track of the error terms. 
\item An \textit{effective but not explicit} result shows that a statement is true for an unspecified constant $C$ with the bonus that one could actually determine a suitable value for $C$ by reworking the proof explicitly. 
\item An \textit{explicit} result gives a numerical value for $C$. 
\end{enumerate}

This thesis places regard on establishing explicit results in the theory of prime numbers. It is said that in this area, one often works through the original proof of some result whilst keeping careful bounds on any error terms that arise. However, in some cases the original proof is ineffective and so one must come up with a new but effective proof first. In fact, one of the results enunciated in this thesis requires such a remodelling; this will be discussed in more detail in Chapter 4.

It should be clear that the type of some result will depend directly on the methods used to prove it. For example, consider Bertrand's postulate, the statement that there is a prime in the interval $(x,2x)$ for all $x>1$. This result is explicit and, as such, would require explicit results to prove it, namely some reasonable estimate on the prime counting function. 

To elaborate, we can consider the Prime Number Theorem in the ineffective form as proven by Hadamard, which states that the quotient of $\pi(x)$ over $x /\log x$ tends to one as $x$ tends to infinity. It follows \textit{a fortiori} that for every $\epsilon > 0$ there exists some $x_0(\epsilon) > 0$ such that
$$\bigg| \pi(x) - \frac{x}{\log x} \bigg| <  \frac{\epsilon x}{\log x}$$
for all $x > x_0(\epsilon)$. Thus, choosing $\epsilon =1/4$ and letting $x > x_0(1/4)$ we have that
\begin{eqnarray*}
\pi(2x) - \pi(x) & > & \frac{2x}{\log (2x)} - \frac{x}{\log x} - \frac{x}{2 \log (2x)} - \frac{x}{4 \log x} \\
& > & \frac{x}{5 \log x}
\end{eqnarray*}
provided also that $x > x_1$, where $x_1$ is sufficiently large so that the second inequality holds. Therefore, we have that there is a prime in the interval $(x,2x)$ provided that $x > \max(x_0(1/4), x_1)$. This form of Bertrand's postulate is ineffective, and this is simply because the ineffectiveness of Hadamard's result is pushed through into our proof. And whilst it is a straightforward problem to determine a suitable value for $x_1$, Hadamard's proof of the Prime Number Theorem does not deliver the details on how $x_0(\epsilon)$ behaves as a function of $\epsilon$.

It is at this point that we provide some comfort to the reader; the Prime Number Theorem has been realised explicitly since the time of Hadamard and de la Vall\'{e}e Poussin. A more detailed discussion of this will be found in Chapter 2. For now, it remains to motivate the reason for developing explicit results.

In a pure sense, some statements in number theory may seem more complete when stated explicitly. If one can prove, for example, that every sufficiently large integer is endowed with some property of interest, then surely the next step in strengthening such a result is to remove the qualification of being sufficiently large. An immediate barrier to this could be that the proof itself is ineffective, or that the proof is too difficult to be worked through carefully. One must always weigh the labour of furnishing an explicit result against the value of possessing it; personal taste is clearly a deciding factor here.

However, we now find ourselves in an age where such results are not only easier to obtain but also more valuable than before. The area of explicit methods in number theory is gathering momentum in the wake of high-speed computation and an enhanced interest in the prime numbers for application. To flesh out this claim with an example, it is known that certain properties of the prime numbers allow us to construct very efficient computer networks (see the survey on expander graphs \cite{expandersurvey} by Hoory, Linial and Wigderson). However, if one only knows that these properties hold for ``sufficiently large'' prime numbers, then one can only guarantee high connectivity in networks with a ``sufficiently large'' number of computers. This is a barrier to having any measure of practical assurance. An explicit result, on the other hand, will allow us to state exactly which primes hold the property of interest, and this in turn allows us to build good networks in practice. 

Other areas of mathematics will, at certain times, also draw on the properties of the integers, and thus the development of explicit results in number theory finds utility in research of a purely mathematical nature. For instance, a group theorist seeking explicit bounds on the maximum order of an element in the symmetric group on $n$ letters can achieve this via means of explicit estimates on the prime numbers. One can see the work of Massias, Nicolas and Robin \cite{massiasNicolasRobin} for the details of this profound relationship. 

\section{Summary of thesis}

This thesis describes a collection of new explicit results in the theory of prime numbers.  It follows from the work of Hadamard and de la Vall\'ee Poussin that if one wants to furnish such results on the primes, one first needs to establish explicit bounds on the zeroes of the Riemann zeta-function. The general framework for this approach is described in detail in Chapter 2, but we will outline the results here.

To connect the prime numbers to the zeroes of $\zeta(s)$, we require the use of a so-called explicit formula. It is important to note that here we do not mean explicit in the sense of determining explicit constants; the formula explicitly connects some sum over the prime numbers to another sum over the zeroes of $\zeta(s)$. There are various types of explicit formulas that one can use, however we will be considering principally the Riemann--von Mangoldt explicit formula, which states that
$$\psi(x)=\sum_{p^m \leq x} \log p = x - \sum_{\rho} \frac{x^{\rho}}{\rho} - \log 2 \pi - \frac{1}{2} \log(1-x^{-2}),$$
where the leftmost sum is over the prime powers that do not exceed some positive non-integer $x$, and the other sum is over the nontrivial zeroes $\rho = \beta+i \gamma$ of $\zeta(s)$. Clearly, knowledge regarding the distribution of zeroes can be injected into the rightmost sum in order to gain clarity regarding the prime numbers. More useful, however, is the truncated Riemann--von Mangoldt explicit formula
$$\psi(x)= x - \sum_{| \gamma | \leq T} \frac{x^{\rho}}{\rho} + O\bigg( \frac{x \log^2 x}{T} \bigg),$$
which allows its employer to insert information on the nontrivial zeroes with height at most $T$. In Chapter 2, we prove an \textit{actually} explicit version of this formula; this allows one to keep numerical bounds on the error term. We do this by working through the classical version of the proof whilst working carefully to control the error terms.

In Chapter 3, we show how our explicit formula can be used to furnish a result on primes in short intervals. Notably, at the International Congress of Mathematicians in 1912, Landau listed four basic problems about the prime numbers. The third of these was a conjecture of Legendre's that there exists a prime between any two consecutive squares. This is currently unresolved, however it was shown by Ingham \cite{ingham} in 1937 that there exists a prime between any two sufficiently large cubes. We make Ingham's result explicit; specifically, we show that there will be a prime in the interval $(n^3, (n+1)^3)$ provided that $n \geq \exp(\exp(33.3))$.

This result is achieved by constructing an explicit formula that detects primes in intervals, and then substituting into it explicit information regarding the real parts of the nontrivial zeroes of $\zeta(s)$. It should be stressed at this point, that most successes in bounding the real parts of these zeroes are weak in comparison to the famed hypothesis of Riemann, which asserts that every zero in the critical strip has a real part of~$1/2$. In fact, working under the Riemann hypothesis, it can be shown that there will be a prime between \textit{any} two consecutive positive cubes (see the work of Caldwell and Cheng \cite{caldwellcheng}). Our result, therefore, is a step towards proving the theorem of Caldwell and Cheng without the conditional sledgehammer of Riemann's hypothesis. 

On a related note, it is almost conventional for number theorists to examine a result unconditionally first, and then to see what can be done further on the assumption of the Riemann hypothesis. The author has found this convention to be a tempting one, and so some of the problems considered in this thesis are doubly assaulted. The problem of primes in short intervals is no exception to this, and so we take such an approach in the second part of Chapter 3.

It is already a celebrated result of Cram\'{e}r \cite{cramer} from 1921 that, assuming the Riemann hypothesis, there exist some $c>0$ and $x_0$ such that the interval $(x,x+c \sqrt{x} \log x]$ contains a prime number for all $x>x_0$. Using explicit formulas, we prove that there will be a prime in the interval $(x-\frac{4}{\pi} \sqrt{x} \log x,x]$ for all $x\geq 2$. Moreover, we show that the constant $4/\pi$ can be reduced to $1+\epsilon$, where $\epsilon>0$ can be taken to be arbitrarily close to zero. This result has been published by the author in the \textit{International Journal of Number Theory} \cite{dudekcramer}. Moreover, the author has refined this result in a paper coauthored with L. Greni\'{e} and G. Molteni \cite{dudekmoltenigrenie}. This paper is due to appear in the same journal. 

In Chapter 4, the author shifts his attention towards additive problems involving primes. Such problems are borne from the zeal of number theorists in supposing that, as every number may be composed multiplicatively by the primes, it might also be true that one can build numbers from primes using addition. The root of this can be traced to a letter of Goldbach's dating back to 1742, in which he writes to Euler and conjectures that every even number greater than 2 can be written as the sum of two primes, and every odd number greater than 5 can be written as the sum of three primes. The latter conjecture was proven by Helfgott \cite{helfgott2013} in 2013, and thus it is only the so-called binary Goldbach conjecture which remains. This problem also made it onto the famous problem list of Landau's which we referred to earlier.

The best known result towards the binary Goldbach conjecture is a theorem of Chen \cite{chen} which states that every sufficiently large even integer can be written as the sum of a prime and a product of at most two primes. It was the author's original intention to make Chen's theorem explicit, perhaps by proving that all even numbers greater than two could be written this way. A result towards this was recently established by Yamada \cite{yamada}, where he proves that every even integer greater than $\exp(\exp(36))$ can be written this way. An earlier but weaker result of Estermann \cite{estermann} seemed to possess the appropriate return-on-effort. As such, the author proves in the first part of Chapter 4 that every integer greater than two is the sum of a prime and a square-free number. This result has been accepted for publication and will appear in the \textit{Ramanujan Journal} \cite{dudekestermann}.

In the second part of Chapter 4, we consider an extension of Estermann's result which was first studied by Erd\H{o}s \cite{erdos}. Specifically, Erd\H{o}s shows that any sufficiently large positive integer $n \not\equiv 1 \text{ mod } 4$ can be written as the sum of the square of a prime and a square-free number. In the same nature as our continuation of Estermann's work, we prove that every integer $n\geq 10$ such that $n \not\equiv 1 \text{ mod } 4$ can be written as the sum of the square of a prime and a square-free number. This was collaborative work with Dave Platt at the University of Bristol and has been accepted for publication by the \textit{LMS Journal of Computation and Mathematics}~\cite{dudekplatt2}. 

Chapter 5 details another collaboration with Dave Platt, where we study a curious inequality first penned by the prodigious mathematician Ramanujan. In one of his famous notebooks (see Berndt \cite{berndt} for the details), Ramanujan writes that
$$\pi^2(x) <\frac{ex}{\log x} \pi \bigg(\frac{x}{e} \bigg)$$
holds for all sufficiently large values of $x$. We take it upon ourselves to make this explicit, proving that the inequality holds unconditionally for all $x \geq \exp(9394)$. We then place the inequality under the conditional scrutiny of the Riemann hypothesis, from which it follows that $x=38, 358, 837, 682$ is the largest integer counterexample to Ramanujan's inequality. This solves the inequality completely under the assumption of the Riemann hypothesis. This work has been published in \textit{Experimental Mathematics} \cite{dudekplattem}.

Finally, we use Chapter 6 to outline some projects which are similar to those in this thesis and could be undertaken by eager researchers. These are projects which once found themselves on the author's interminable to-do list, but would probably best serve the field by tempting in young researchers or being the talking point for new collaborations.


\chapter[The Explicit Formula]{The Riemann--von Mangoldt Explicit Formula}
\label{chapter2}

\begin{quote}
\textit{``Take some more tea," the March Hare said to Alice, very earnestly.} \\
\textit{``I've had nothing yet,'' Alice replied in an offended tone, ``so I can't take more.''}\\
\textit{``You mean you can't take less,'' said the Hatter: ``it's very easy to take more than nothing.''}

\end{quote}

\section{The Riemann zeta-function}

This chapter studies the relationship between the prime numbers and the zeroes of the Riemann zeta-function. In this first section, the focus is emphatically on the latter, and we deliver a terse course on the properties of the zeta-function, referring readers to the excellent text of Titchmarsh \cite{titchmarsh1986theory} for more details. 

Throughout this chapter, we will often refer to the complex variable $s = \sigma + i t$. As mentioned in the introduction, the Riemann zeta-function is defined for complex numbers satisfying $\text{Re}(s) > 1$ by the infinite series
$$\zeta(s) = \sum_{n=1}^{\infty} \frac{1}{n^s}.$$
One can continue this analytically to the entire complex plane with the exception of a simple pole at the point $s=1$. Moreover, it can be shown that the so-called functional equation
$$\pi^{-s/2} \Gamma\bigg( \frac{s}{2} \bigg) \zeta(s) = \pi^{-\frac{1-s}{2}} \Gamma\bigg( \frac{1-s}{2} \bigg) \zeta(1-s)$$
holds, where $\Gamma$ denotes the gamma function. Noting that both $\zeta(s)$ and $\Gamma(s)$ are without poles in the half-plane $\text{Re}(s)>1$, it follows that the simple poles of $\Gamma(s/2)$ at the negative even integers give rise to zeroes of $\zeta(s)$ at the same points. 

Moreover, it is known that all other zeroes $\rho = \beta+i \gamma$ of $\zeta(s)$ are constrained to lie in the aforementioned critical strip, that is, they satisfy the bound $0 < \beta < 1$. It is also known (from the functional equation) that these zeroes are distributed symmetrically about the line $\text{Re}(s) = 1/2$. The holy grail of analytic number theory would be the Riemann hypothesis, which asserts that $\beta=1/2$ for all zeroes in the strip. In lieu of such a grand theorem, there is an extensive line of work towards establishing measures on the distribution of the nontrivial zeroes.

One way to quantify this distribution is through the use of a \textit{zero-free region} in the critical strip. For example, it was first shown by de la Vall\'ee Poussin \cite{poussin}, in his proof of the Prime Number Theorem, that there exists $c>0$ such that every zero in the critical strip satisfies
$$\frac{c}{\log | \gamma |} \leq \beta \leq 1-  \frac{c}{\log | \gamma |}.$$
The best known zero-free region was given by Korobov \cite{korobov} and Vinogradov \cite{vinogradovzero}. Specifically, they show that there exists $c>0$ such that any zero in the strip satisfies
\begin{equation} \label{vinogradovkorobov}
\frac{c}{(\log | \gamma |)^{2/3} ( \log \log |\gamma|)^{1/3}} \leq \beta \leq 1-  \frac{c}{(\log | \gamma |)^{2/3} ( \log \log |\gamma|)^{1/3}}.
\end{equation}
We will need to make use of several explicit versions of these zero-free regions at certain places within this thesis. 

Another way to probe the distribution of zeroes is through the use of a \textit{zero-density estimate}. Specifically, we let $N(T)$ denote the number of zeroes in the critical strip such that $0 < \gamma < T$. Then we denote by $N(\sigma,T)$ the number of these zeroes with $\beta >\sigma$. A zero-density estimate is then some upper bound on the function $N(\sigma,T)$. A simple example can be found in Theorem 9.15(A) of Titchmarsh \cite{titchmarsh1986theory}, where it is shown that for any fixed $\sigma > 1/2$ we have that
$$N(\sigma,T) = O(T).$$
In comparison to the well known asymptotic formula
$$N(T) = \frac{T \log T}{2\pi}+O(T),$$
this shows us that all but an infinitesimal proportion of zeroes of $\zeta(s)$ satisfy $\frac{1}{2} - \epsilon < \beta < \frac{1}{2} + \epsilon$ for any $\epsilon > 0$.

In Chapter 3, we will use explicit versions of both zero-free regions and zero-density estimates to attack the problem of primes in short intervals. First, we need to prove the formula that permits such analytic results to push through into the prime numbers.

\section{Proof of the explicit formula}
In this section, we derive the Riemann--von Mangoldt explicit formula with an explicit bound on the error term. Our considerations begin with the von Mangoldt function:
\begin{displaymath}
   \Lambda(n) = \left\{
     \begin{array}{ll}
       \log p  & : \hspace{0.1in} n=p^m, \text{ $p$ is prime, $m \in \mathbb{N}$}\\
       0   & : \hspace{0.1in} \text{otherwise.}
     \end{array}
   \right.
\end{displaymath} 
As mentioned in Chapter 1, we introduce the sum $\psi(x) = \sum_{n \leq x} \Lambda(n)$ to study the distribution of the prime numbers; this is known as the Chebyshev $\psi$-function. The Riemann--von Mangoldt explicit formula can be stated thus
\begin{equation} \label{RVM}
\psi(x) = x - \sum_{\rho} \frac{x^\rho}{\rho}-\log 2\pi - \frac{1}{2} \log(1-x^{-2}),
\end{equation}
where $x$ is any positive non-integer and the sum is over all nontrivial zeroes $\rho$ of $\zeta(s)$ (see Davenport \cite[Ch.\ 17]{davenport} for details). One can see that feeding information regarding the zeroes of $\zeta(s)$ into the above formula yields estimates for the prime powers, allowing us then to obtain estimates regarding the primes. However, the explicit formula in (\ref{RVM}) relies on estimates over \textit{all} of the nontrivial zeroes of $\zeta(s)$ and so is impractical for certain applications.

We often find more use in a \textit{truncated} version of the explicit formula in which one inputs information regarding the zeroes $\rho$ up to some height $T$, that is, with $| \text{Im}(\rho) | < T$. It is our first intention to provide such a formula but with an explicit error term; this will allow us to make progress on the problem of primes between consecutive cubes. To state such a result, we will first define the notation $f(x) = g(x) + O^*(h(x))$ to mean $|f(x)-g(x)| < h(x)$ for some given range of $x$. Our result is as follows.

\begin{theorem} \label{explicitformula}
Let $x>e^{60}$ be half an odd integer and suppose that $50 < T < x$. Then
\begin{equation} \label{explicitequation}
\psi(x) = x - \sum_{|\text{Im}( \rho)| < T} \frac{x^{\rho}}{\rho} + O^*\bigg( \frac{ 2 x \log^2 x}{T}\bigg).
\end{equation}
\end{theorem}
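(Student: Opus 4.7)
The plan is to start from a truncated Perron formula, shift the contour to the left, collect residues at the pole $s=1$ and at the nontrivial zeros with $|\text{Im}(\rho)|<T$, and absorb the contributions from the trivial zeros, the residue at $s=0$, and every contour remainder into the error term $O^*(2x\log^2 x/T)$. Concretely, with $c=1+1/\log x$, Perron's formula (using that $x$ is a half-integer, so there is no issue with $x$ being a prime power and the ``boundary'' error term vanishes) gives
\begin{equation*}
\psi(x) \;=\; \frac{1}{2\pi i}\int_{c-iT}^{c+iT}\!\!\left(-\frac{\zeta'(s)}{\zeta(s)}\right)\frac{x^s}{s}\,ds \;+\; R_{\mathrm{Per}},
\end{equation*}
and the standard Dirichlet-series tail estimates, refined with explicit constants, yield an $R_{\mathrm{Per}}$ of order $x\log^2 x/T$ with a controlled multiplicative constant.

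Next, I would replace the vertical segment $[c-iT,c+iT]$ by the three sides of a rectangle with left edge $\mathrm{Re}(s)=-U$ for some large half-integer $U$, ensuring that no zero of $\zeta$ lies on the contour (a trivial adjustment of $T$ by at most $1$, justified since the number of zeros in any strip of height $1$ is $O(\log T)$). Inside this box the integrand has a simple pole at $s=1$ of residue $x$, simple poles at the nontrivial zeros $\rho$ with $|\text{Im}(\rho)|<T$ contributing $-x^\rho/\rho$, a pole at $s=0$ contributing $\zeta'(0)/\zeta(0)=-\log 2\pi$, and poles at the trivial zeros $s=-2k$ contributing $x^{-2k}/(2k)$, which telescope to $\tfrac12\log(1-x^{-2})$ as $U\to\infty$. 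The first two terms yield the main terms in the claimed formula; the remaining constants are bounded in absolute value by a quantity far smaller than $2x\log^2 x/T$ for $x>e^{60}$, so they can be swallowed into $O^*$.

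It remains to bound the two horizontal segments and the left vertical segment. For the horizontal sides, one uses the partial fraction expansion of $\zeta'/\zeta$, which expresses it modulo $O(\log|t|)$ as a sum over zeros $\rho$ with $|\gamma-t|\le 1$, a sum of length $O(\log T)$; choosing $T$ so that $|T-\gamma|\gg 1/\log T$ then gives an explicit bound $|\zeta'(s)/\zeta(s)|\ll \log^2 T$ on the line $\mathrm{Im}(s)=T$. Combined with $|x^s/s|\le x^\sigma/T$, the horizontal contribution over $-U\le\sigma\le c$ is bounded by a constant times $x\log^2 x/T$. For the left vertical segment, the functional equation transfers $|\zeta'(s)/\zeta(s)|$ to the right half-plane where it is trivially bounded, producing a contribution that decays like $x^{-U}$ and is negligible as $U\to\infty$.

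The main obstacle will be packaging all of this with genuinely \emph{explicit} numerical constants so that the combined error sits inside the single bound $2x\log^2 x/T$. This requires (i) an explicit form of Perron's truncated formula with sharp constants, (ii) an explicit version of the partial-fraction estimate for $\zeta'/\zeta$ on horizontal lines (so the ``$O(\log^2 T)$'' becomes a usable inequality), and (iii) a careful choice of the horizontal height that avoids clustering of zeros, using an explicit bound on $N(T+1)-N(T-1)$. The hypotheses $x>e^{60}$ and $T>50$ are exactly what is needed to make these three explicit estimates collapse into the clean constant $2$.
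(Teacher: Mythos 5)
Your proposal follows essentially the same route as the paper: truncated Perron with $c = 1 + 1/\log x$ and $x$ a half-integer, rectangular contour shift with residues at $s=1$, the nontrivial zeros, $s=0$, and the trivial zeros, partial-fraction bounds for $\zeta'/\zeta$ on the horizontal edges after nudging the height to keep a distance $\gg 1/\log T$ from nearby zeros, a functional-equation bound on the far left edge, and explicit counts via $N(t+1)-N(t-1) \ll \log t$. The only small imprecision is in your gloss on why $x$ is taken to be a half-integer: the critical point is not the $\delta(1)=1/2$ boundary case but rather that the Perron tail $\sum_n \Lambda(n)(x/n)^c|\log(x/n)|^{-1}$ has terms that blow up when $n$ is near $x$, and the half-integer condition bounds $|\log(x/n)|^{-1}$ there; this is precisely where the paper works hardest (and where the $T < x$ hypothesis also enters).
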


In the remainder of this chapter, unless otherwise mentioned, we shall assume that $x$ and $T$ are as specified in Theorem \ref{explicitformula}. Notably, the requirement that $x$ is half an odd integer minimises the error term in Theorem \ref{explicitformula}, as will become evident in the proof of Lemma $\ref{boundbigsum}$. The reason for the seemingly arbitrary bound of $x>e^{60}$ is as follows: we will want to prove that there is a prime in the interval $(x,x+3 x^{2/3})$, as this interval is related to the problem of primes between cubes. It turns out that we can do this if $x \leq e^{60}$ using the short-interval results provided by Ramar\'{e} and Saouter \cite{ramaresaouter}. 

In some of our theorems we drop these requirements. This is because we thought the result would be useful to other mathematicians if stated with fewer constraints, or that the error terms were reasonably small without needing $x$ to be quite so large. For example, Kerr \cite{kerr} has used Lemma \ref{bryce} to establish new lower bounds for the Riemann zeta-function on short intervals of the line $\text{Re}(s) = 1/2$. At other times, there is a clear way to improve a result that has not been pursued. In many of these cases, the work involved more than offsets the almost-negligible rewards, and so we eschew the messy in favour of the neat. We will, however, provide details in such cases for the interested reader.

It should be noted that before working on this problem, the author attempted to locate an effective explicit formula in the literature and found that which was given by Skewes \cite{skewes}, though the error term here is of better order. Liu and Wang \cite{liuwang} give a version of Theorem \ref{explicitformula} with an improved constant, but holding only for $T$ as a certain function of $x$, which is useful for explicit estimates on the ternary Goldbach conjecture but not for our application.

The method of proof for the explicit formula is well-known: we employ Perron's formula to express $\psi(x)$ as a contour integral over a vertical line. We then truncate this integral to one that is over a finite line segment. This is where we will pick up the bulk of the error term, and so more precision is needed here than anywhere else in our proof. Our line of integration is then shifted so as to acknowledge the residues and introduce the sum over the zeroes which accompanies $\psi(x)$ in Theorem \ref{explicitformula}. The crux of our working involves keeping track of the errors.

We proceed as laid out in Davenport's \cite{davenport} well known expository text, though by working carefully we will obtain an explicit result.  We should also note that this derivation can be applied to other arithmetic functions, though there are some differences to be noted. In our case, we have the identity
$$- \frac{\zeta'(s)}{\zeta(s)} = \sum_{n=1}^{\infty} \frac{\Lambda(n)}{n^s}$$
for $\text{Re}(s)>1$, which ties the von Mangoldt function to the Riemann zeta-function. It is precisely this relationship that permits techniques from complex analysis to probe the behaviour of $\psi(x)$. In this case, properties and bounds pertaining to $\zeta(s)$ have ramifications for the prime numbers. Note that in a more general context, one has an arithmetic function $a:\mathbb{N} \rightarrow \mathbb{C}$ and a \textit{Dirichlet series}
$$F(s) = \sum_{n=1}^{\infty} \frac{a(n)}{n^s},$$
convergent for $\sigma > \sigma_0$. We will now show where the complex analysis comes in. For $c>0$, we define the contour integral
$$\delta(x) = \frac{1}{2 \pi i} \int_{c-i\infty}^{c+i\infty} \frac{x^s}{s} ds.$$
A good exercise (see Murty's \cite{murty} problem book) for budding students of analytic number theory (or complex analysis) is to show that
\begin{equation*}
\delta(x) =
\left\{
	\begin{array}{ll}
		0  & \mbox{if } 0 < x < 1 \\  
		1/2 & \mbox{if } x=1 \\  
		1 & \mbox{if } x > 1.
	\end{array}
\right.
\end{equation*}
The importance of this integral becomes apparent when one wishes to study the sum of an arithmetic function up to some value $x$, particularly when that function is generated by a Dirichlet series. In our case, we consider the following for a positive non-integer $x$ and $c>1$:
\begin{eqnarray} \label{boogers}
\psi(x) = \sum_{n \leq x} \Lambda(n) & = &  \sum_{n = 1}^{\infty} \Lambda(n) \delta\Big(\frac{x}{n} \Big) \nonumber \\
& = & \sum_{n=1}^{\infty} \Lambda(n) \bigg[ \frac{1}{2 \pi i} \int_{c-i \infty}^{c+i \infty} \bigg( \frac{x}{n} \bigg)^s \frac{ds}{s} \bigg] \nonumber \\
& = & \frac{1}{2 \pi i} \int_{c-i \infty}^{c+i \infty} \bigg( \sum_{n=1}^{\infty} \frac{\Lambda(n)}{n^s} \bigg) \frac{x^s}{s} ds.
\end{eqnarray}
Notice that keeping $c>1$ gives absolute convergence to the series in the above equation, and thus justifies the interchange of integration and summation. From before, we know that the Dirichlet series in (\ref{boogers}) is equal to $-\zeta'(s)/\zeta(s)$, and so we have that
$$\sum_{n \leq x} \Lambda(n) = \frac{1}{2 \pi i}  \int_{c-i \infty}^{c+i \infty} \bigg( - \frac{\zeta'(s)}{\zeta(s)} \bigg) \frac{x^s}{s} ds.$$
In its more general form pertaining to an unspecified arithmetic function this is known as Perron's formula. We may thus estimate the sum of the von Mangoldt function through some knowledge of certain analytic properties of $\zeta'(s)/\zeta(s)$. Our first step is to truncate the path of the integral to a finite segment, namely $(c-iT, c+iT)$. We define for $T > 0$ the truncated integral
$$I(x,T) =  \frac{1}{2 \pi i} \int_{c-i T}^{c+i T} \frac{x^s}{s} ds.$$

The next lemma is a variant of the first lemma in Davenport \cite[Ch.\ 17]{davenport}, and will bound the induced error term upon estimating $\delta(x)$ by $I(x,T)$. The proof is omitted here, though one can see Theorem 15 of Estermann \cite{estermannbook} for a complete proof.

\begin{lemma}
For $x>0$ with $x \neq 1$, $c>0, T>0$ we have
\begin{equation*}
\delta(x) = I(x,T) + O^*\bigg( \frac{x^c}{\pi T | \log x |}  \bigg).
\end{equation*}

\end{lemma}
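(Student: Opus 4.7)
The plan is to compare the truncated integral $I(x,T)$ with $\delta(x)$ by closing the contour via a large rectangle and using the residue theorem, splitting into the cases $x>1$ and $0<x<1$.

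\textbf{Case $x>1$.} First I would consider the rectangle $R_U$ with vertices $c-iT,\ c+iT,\ -U+iT,\ -U-iT$, oriented counter-clockwise, where $U>0$ is large. The only pole of $x^s/s$ inside $R_U$ is the simple pole at $s=0$ with residue $1$, so by the residue theorem,
$$\frac{1}{2\pi i}\oint_{R_U}\frac{x^s}{s}\,ds=1.$$
Splitting the contour into its four sides, this becomes $I(x,T)$ plus the three remaining pieces. Next I would let $U\to\infty$: on the left vertical side, $|x^s/s|\le x^{-U}/U\to 0$ uniformly in $t\in[-T,T]$, so that piece vanishes. The two horizontal pieces can be estimated by noting that on them $|s|\ge T$, hence
$$\left|\frac{1}{2\pi i}\int_{-\infty\pm iT}^{c\pm iT}\frac{x^s}{s}\,ds\right|\le \frac{1}{2\pi T}\int_{-\infty}^{c}x^{\sigma}\,d\sigma=\frac{x^c}{2\pi T\log x}.$$
Adding the two horizontal contributions and invoking $\delta(x)=1$ yields the desired bound with constant $1/(\pi T\log x)$.

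\textbf{Case $0<x<1$.} Now I would use the rectangle $R_U'$ with vertices $c\pm iT$ and $U\pm iT$, oriented so that $s=0$ lies outside. There are no poles inside, so the integral around $R_U'$ vanishes. Again the right vertical side tends to $0$ as $U\to\infty$, since $|x^s/s|\le x^U/U$ and $x<1$. On the horizontal sides, the estimate becomes
$$\left|\frac{1}{2\pi i}\int_{c\pm iT}^{\infty\pm iT}\frac{x^s}{s}\,ds\right|\le \frac{1}{2\pi T}\int_{c}^{\infty}x^{\sigma}\,d\sigma=\frac{x^c}{2\pi T|\log x|}.$$
Since $\delta(x)=0$ here, summing the two horizontal estimates gives the same bound $x^c/(\pi T|\log x|)$.

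\textbf{Main obstacle.} The argument is essentially routine contour-shifting, so the only care required is in matching the constant $1/\pi$ exactly: each horizontal segment contributes $x^c/(2\pi T|\log x|)$, and it is crucial that the two contributions combine additively (rather than, say, being bounded separately) to give precisely the constant $1/\pi$ stated in the lemma. The bound $|s|\ge T$ on each horizontal line is what produces a factor $1/T$, and the integrals $\int_{-\infty}^{c}x^{\sigma}\,d\sigma$ and $\int_c^{\infty}x^{\sigma}\,d\sigma$ are exactly $x^c/|\log x|$ in the respective cases $x>1$ and $x<1$, which explains the uniform appearance of $|\log x|$ in the error term. No pointwise $O^*$-constant is lost in this argument because the horizontal estimates are sharp sign-by-sign.
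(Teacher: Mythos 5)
Your contour-shifting argument is correct and is exactly the standard proof of this truncated-Perron lemma; the paper itself omits the proof, pointing to Estermann's Theorem 15 (and to Davenport, Ch.\ 17), both of which close the contour with a rectangle to the left or right of the line $\mathrm{Re}(s)=c$ and bound the horizontal pieces by $x^{c}/(2\pi T|\log x|)$ each, just as you do. The only detail you leave implicit is that the paper's $O^{*}$ notation demands a strict inequality, which does hold here since $|s|=\sqrt{\sigma^{2}+T^{2}}>T$ on the horizontal segments except at the single point $\sigma=0$.
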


We now employ this bound to estimate $\psi(x)$ in the following way.
\begin{eqnarray*}
\psi(x) & = &  \sum_{n = 1}^{\infty} \Lambda(n) \delta\bigg(\frac{x}{n} \bigg) \\ 
& = &  \sum_{n = 1}^{\infty} \Lambda(n) \bigg[I\bigg(\frac{x}{n},T\bigg) + O^*\bigg(\frac{1}{\pi T} \bigg(\frac{x}{n}\bigg)^c  \bigg| \log \frac{x}{n} \bigg|^{-1} \bigg)\bigg] \\ 
& = &  \frac{1}{2 \pi i} \int_{c-i T}^{c+i T} - \frac{\zeta'(s)}{\zeta(s)} \frac{x^s}{s} ds + \frac{1}{\pi T} O^* \bigg( \sum_{n=1}^{\infty}  \Lambda(n) \bigg(\frac{x}{n}\bigg)^c \bigg| \log \frac{x}{n} \bigg|^{-1} \bigg).
\end{eqnarray*}
We proceed to bound the sum in the above formula, by splitting it up and estimating it in parts. 

\begin{lemma} \label{boundbigsum}
Let $x > e^{60}$ be half an odd integer and set $c = 1+1/\log x$. Then
\begin{equation}
\sum_{n=1}^{\infty} \Lambda(n) \bigg( \frac{x}{n} \bigg)^c \bigg| \log \frac{x}{n} \bigg|^{-1} < 3.1 x \log^2 x.
\end{equation}

\end{lemma}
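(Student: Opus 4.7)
The plan is to split the sum
$$S := \sum_{n=1}^{\infty} \Lambda(n) \Big(\frac{x}{n}\Big)^c \Big|\log \frac{x}{n}\Big|^{-1}$$
according to how close $n$ is to $x$, following the classical Davenport~\cite[Ch.~17]{davenport} treatment but tracking the constants. I will split at the thresholds $3x/4$ and $5x/4$, writing $S = S_{\mathrm{far}} + S_{\mathrm{near}}$, where $S_{\mathrm{far}}$ collects the terms with $n \leq 3x/4$ or $n \geq 5x/4$, and $S_{\mathrm{near}}$ collects the remainder.

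For $S_{\mathrm{far}}$ the factor $|\log(x/n)|^{-1}$ is bounded by $1/\log(4/3)$, so extending the sum to all $n$ gives
$$S_{\mathrm{far}} \leq \frac{x^c}{\log(4/3)} \sum_{n=1}^{\infty} \frac{\Lambda(n)}{n^c} = -\frac{x^c}{\log(4/3)} \cdot \frac{\zeta'(c)}{\zeta(c)}.$$
Since $c = 1 + 1/\log x$, we have $x^c = e x$, and the Laurent expansion of $\zeta$ at $s=1$ gives $-\zeta'(c)/\zeta(c) = (c-1)^{-1} + O(1) = \log x + O(1)$. Hence $S_{\mathrm{far}} = O(x \log x)$, which for $x > e^{60}$ is negligible against the target $3.1 x \log^2 x$.

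The main work lies in bounding $S_{\mathrm{near}}$. In that range we have $\Lambda(n) \leq \log(5x/4) = \log x + \log(5/4)$, and $(x/n)^c \leq (4/3)^c$ which is controlled explicitly once $x \geq e^{60}$ (so that $c \leq 61/60$). For the delicate factor $|\log(x/n)|^{-1}$, write $n = x \pm k$ and apply the elementary inequalities $-\log(1-u) \geq u$ for $n < x$ and $\log(1+u) \geq u(1 - u/2)$ for $n > x$ to obtain $|\log(x/n)|^{-1} \leq C x / |x - n|$ with an explicit absolute constant $C$. Crucially, since $x$ is half an odd integer, for every integer $n$ the distance $|x - n|$ lies in $\{1/2,\,3/2,\,5/2,\ldots\}$, and bounded above by roughly $x/4$. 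Hence the residual sum $\sum_n 1/|x-n|$ over integers $n$ with $3x/4 < n < 5x/4$ splits into two copies of $\sum_{m=0}^{M} 1/(m+1/2)$ with $M = \lfloor x/4 \rfloor$, each equal to $\log x + O(1)$. Combining, $S_{\mathrm{near}} \leq (c_1 + o(1)) x \log^2 x$ for an explicit constant $c_1$ whose size I expect to be roughly $2.5$.

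The principal obstacle is numerical tightness: because the leading constant is intrinsically close to $2.5$, every multiplicative factor must be tracked sharply. The hypothesis $x > e^{60}$ is precisely what forces $(4/3)^c$ and similar lower-order corrections sufficiently close to their limiting values, and the choice of $x$ as a half-odd-integer is essential because otherwise the term $n = x$ would make $|\log(x/n)|^{-1}$ infinite. If the resulting constant turns out too large, one could refine by splitting $S_{\mathrm{near}}$ further according to whether $|x-n|$ is of order $1$ or comparable to $x$ and applying sharper forms of the above inequalities in each sub-range; with the chosen thresholds $3x/4$ and $5x/4$, however, I expect the total to fall comfortably below $3.1 x \log^2 x$.
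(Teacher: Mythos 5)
Your approach is essentially the same as the paper's: both split the sum by proximity to $x$, handle the far pieces by bounding $|\log(x/n)|^{-1}$ below by a constant and recognising the Dirichlet series $-\zeta'(c)/\zeta(c) \leq 1/(c-1) = \log x$, and handle the near pieces by bounding $|\log(x/n)|^{-1} \lesssim x/|x-n|$ and summing a harmonic series over the gaps $|x-n| \in \{1/2, 3/2, \ldots\}$, exploiting the half-odd-integer hypothesis to keep $|x-n| \geq 1/2$. The only cosmetic differences are that the paper keeps a free parameter $\alpha$ (eventually set to $1.2$) in place of your fixed thresholds $3x/4,\,5x/4$, and it peels off the two terms $n=[x],[x]+1$ as a separate sum $S_3$ rather than folding them into the uniform $x/|x-n|$ bound as you do; both choices lead to a constant comfortably below $3.1$.
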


\begin{proof}
Some care needs to be taken here. When $x$ and $n$ are quite close, the $\log$ term in the sum will become large. Thus, we introduce the parameter $\alpha \in (1,2)$ and break up the infinite sum:
\begin{eqnarray*}
\sum_{n=1}^{\infty} & = & \sum_{n=1}^{[x/\alpha]} + \sum_{n=[x/\alpha] + 1}^{[x]-1} + \sum_{n = [x]}^{[x]+1} + \sum_{n = [x]+2}^{[\alpha x]}+\sum_{n=[\alpha x]+1}^{\infty} 
\end{eqnarray*}
On the right side of the above formula, denote the $i$th sum by $S_i$. The reader should be convinced by this division; $S_3$ deals with the most inflated terms, namely when $n$ is either side of $x$. Then $S_2$ and $S_4$ measure the remainder of the region which is \textit{close} to $x$. We also note that $S_1$ and $S_5$ contribute little and can be estimated almost trivially.

Considering the range of $n$ in $S_1$ and $S_5$, we have
$$\bigg| \log \frac{x}{n} \bigg| > \log \alpha.$$
Inserting this into these sums, pulling out terms which are independent of $n$, and extending the range of summation to $\mathbb{N}$ we arrive at
\begin{eqnarray} \label{nooborz}
S_1 + S_5 < \frac{x^c}{\log \alpha} \sum_{n=1}^{\infty}\frac{\Lambda(n)}{n^c} & = & \frac{x^c}{\log \alpha} \bigg( -\frac{\zeta'(c)}{\zeta(c)} \bigg) \nonumber \\
& = & \frac{ex}{\log x} \bigg( - \frac{\zeta'}{\zeta} (1+1/ \log x ) \bigg).
\end{eqnarray}
The main theorem from Delange \cite{delange} states that
\begin{equation} \label{fordlemma}
\bigg| - \frac{\zeta'}{\zeta} (\sigma + i t) \bigg| < \frac{1}{\sigma-1}
\end{equation}
for all $\sigma \in (1,1.06]$ and $t \in \mathbb{R}$. We apply this to (\ref{nooborz}) to get
\begin{equation} \label{s1s5}
S_1+S_5 < \frac{ex \log x}{\log \alpha}.
\end{equation} 

We now turn our attention to $S_3$, which is the sum of only two things. It follows, using the fact that $[x]=x-1/2$ and the trivial bound $\Lambda(n) \leq \log n$, that
\begin{eqnarray*}
S_3 & = & \bigg( \frac{x}{x-\frac{1}{2}} \bigg)^c \Lambda(x-1/2) \bigg| \log \frac{x}{x-\frac{1}{2}} \bigg|^{-1} +  \bigg( \frac{x}{x+\frac{1}{2}} \bigg)^c \Lambda(x+1/2) \bigg| \log \frac{x}{x+\frac{1}{2}} \bigg|^{-1} \\
& < & 2 \bigg( \frac{x}{x-\frac{1}{2}} \bigg)^c \log(x+1/2) \Big( \log x - \log (x-1/2) \Big)^{-1} 
\end{eqnarray*}
We can estimate trivially with
$$\log x- \log (x-1/2) = \int_{x-1/2}^{x} \frac{dt}{t} > \frac{1}{2(x-1/2)}$$
and the bound
$$\bigg( \frac{x}{x-\frac{1}{2}} \bigg)^c < 1.1$$
for $x>\exp(60)$ to get that
$$S_3 < 4.4 \log (x+1/2).$$
This will actually be of little consequence to the final sum (as we will soon see), and so we feel no remorse in collecting here the weaker but tidier bound
\begin{equation} \label{s3}
S_3 < 5 x \log x.
\end{equation}

For $S_2$, we estimate $x/n < \alpha$ and $\Lambda(n) \leq \log x$ to get
$$S_2 < \alpha^c \log x \sum_{n=[x/\alpha]+1}^{[x]-1} \bigg| \log \frac{x}{n} \bigg|^{-1}.$$
Bounding $\Lambda(n)$ in this way is quite slack. It was pointed out to the author in private correspondence with Olivier Ramar\'{e} that one could apply the Brun--Titchmarsh theorem and save a factor of $\log x$. As we will see in Chapter 3, this would not make much of a difference in the problem of primes between consecutive cubes, and so we do not pursue this avenue.

Now, if we let $n = [x] - v$, then the problem becomes that of summing over $v=1,2,\ldots, [x]-[x/\alpha]-1$. We have
$$\bigg| \log \frac{x}{n} \bigg| = \log \frac{x}{n} > \log \frac{[x]}{n} = - \log \bigg(1- \frac{v}{[x]} \bigg) > \frac{v}{[x]}$$
and thus
$$S_2 < \alpha^c x \log x \sum_{v=1}^{[x]-[x/\alpha]-1} \frac{1}{v}.$$
One can estimate this by the known bound 
$$\sum_{n \leq x} \frac{1}{n} \leq \log x + \gamma + \frac{1}{x}$$
 where $\gamma = 0.5772\ldots$ is Euler's constant to get
\begin{equation} \label{s2}
S_2 < \alpha^c x \log x \bigg( \log(x - x/\alpha) + \gamma + \frac{1}{x-x/\alpha} \bigg).
\end{equation}

The sum $S_4$ is similar to this; we use $\Lambda(n) \leq \log(\alpha x)$ and $x/n < 1$ to get the bound
\begin{eqnarray*} 
S_4 & < & \log(\alpha x) \sum_{n =[x]+2}^{[\alpha x]} \frac{1}{\log (n/x)}.
\end{eqnarray*}
As $1<\alpha<2$, we have upon setting $n=[x]+1+v$ that
\begin{eqnarray*} 
S_4 & < & \log(\alpha x) \sum_{v=1}^{[x]} \frac{1}{\log \Big(\frac{[x]+1+v}{x}\Big)} \\
& < & \log(\alpha x) \sum_{v=1}^{[x]} \frac{1}{\log \Big(\frac{[x]+1+v}{[x]+1}\Big)} \\
& < & \log(\alpha x) \sum_{v=1}^{[x]} \frac{1}{\log \Big( 1 + \frac{v}{[x]+1}\Big)}. \\
\end{eqnarray*}
Using the estimate $\log(1+x) > 2x/3$ for $0<x<1$ we have that
\begin{eqnarray} \label{s4}
S_4 & < & \frac{3}{2} \log( \alpha x) ([x]+1) \sum_{v\leq x} \frac{1}{v}  \nonumber \\
& < &  \frac{3}{2} \log( \alpha x) ([x]+1) \bigg(\log x + \gamma + \frac{1}{x}\bigg).
\end{eqnarray}
Finally, we combine (\ref{s1s5}), (\ref{s3}), (\ref{s2}) and (\ref{s4}) to get an inequality of the form
\begin{equation}
\sum_{n=1}^{\infty} \Lambda(n) \bigg( \frac{x}{n} \bigg)^c \bigg| \log \frac{x}{n} \bigg|^{-1} < f(\alpha, x).
\end{equation}
The result follows from choosing $\alpha = 1.2$ and letting $x>e^{60}$.
\end{proof}

The immediate result of Lemma $\ref{boundbigsum}$ is that
\begin{eqnarray*}
\psi(x) & = &   \frac{1}{2 \pi i} \int_{c-i T}^{c+i T} - \frac{\zeta'(s)}{\zeta(s)} \frac{x^s}{s} ds + O^* \bigg( \frac{3.1 x \log^2 x}{\pi T} \bigg)
\end{eqnarray*}
for $x>e^{60}$, $c=1+1/\log x$ and $T>0$. We now look to shifting the line of integration so that we might involve the residues of the integrand. Let $U$ be a positive odd number and define the line segments 
$$C_1 = [c-iT,c+iT] \hspace{1in} C_2 = [c+iT,-U+iT]$$ 
$$C_{3} = [-U+iT,-U-iT] \hspace{0.8in} C_{4} = [-U-iT,c-iT]$$
and their union $C$ along with the corresponding integrals
$$I_i = \frac{1}{2 \pi i} \int_{C_i} - \frac{\zeta'(s)}{\zeta(s)} \frac{x^s}{s} ds.$$
One can note that $I_4$ is the conjugate of $I_3$, and we will later use this fact to bound both at once. We also denote by $I$ the integral around the rectangle $C$. Note that we need to account for the fact that while $T$ is stipulated not to be the ordinate of a zero of $\zeta(s)$, it might be undesirably close to such. We show in Lemma $\ref{choicet}$ that there is always some good choice of $T$ nearby, and so some work will be required later to shift our horizontal paths. Also note that any work we do in bounding $I_2$ will, by the functional equation for $\zeta(s)$, also hold for $I_{4}$ and so it follows that
\begin{equation} \label{psi}
| \psi(x) - I | < 2 |I_2|+|I_{3}| + 3.1 \frac{x \log^2 x}{\pi T}.
\end{equation}
One can use Cauchy's theorem (see Davenport \cite{davenport} for full details) to show that
\begin{equation*} 
I = x - \sum_{|\gamma| < T} \frac{x^{\rho}}{\rho} - \frac{\zeta'(0)}{\zeta(0)}+\sum_{0 < 2m < U} \frac{x^{-2m}}{2m}
\end{equation*}
where as usual $\rho = \beta+i \gamma$ denotes a nontrivial zero of $\zeta(s)$. Noting that the rightmost summation is a partial sum of the series for $\log(1-x^{-2})/2$, we can write that
$$\psi(x) = x - \sum_{|\gamma| < T} \frac{x^{\rho}}{\rho} + E(x,T,U)$$
where 
\begin{equation} \label{bigerror}
| E(x,T,U) | < \frac{\zeta'(0)}{\zeta(0)} + \frac{1}{2} \log(1-x^{-2}) + 2 |I_2|+|I_{3}| + 3.1 \frac{x \log^2 x}{\pi T}.
\end{equation}
It remains to bound $|I_2|$ and $|I_{3}|$ by deriving and making use of explicit estimates for $| \zeta'(s)/\zeta(s)|$ in appropriate regions.

We first establish a bound that holds on the lengths of the rectangle $C$ that intersect with the half-plane $\sigma \leq -1$. The stipulation that $U$ is a positive odd number is so that we might avoid the poles of $\tan \pi s/2$ which occur at the odd integers. 

\begin{lemma}  \label{bound1}
We have that
\begin{equation*}
\bigg| \frac{\zeta'(s)}{\zeta(s)} \bigg| < 9 + \log | s|
\end{equation*}
on the intersection of $C$ with $\sigma \leq -1$. 
\end{lemma}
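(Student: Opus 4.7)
The plan is to use the functional equation of $\zeta(s)$ to transport estimates from the half-plane $\mathrm{Re}(s) \ge 2$, where $\zeta'/\zeta$ is well-controlled via its Dirichlet series, into the region $\sigma \le -1$. Taking the logarithmic derivative of the asymmetric form
$$\zeta(s) = 2^s \pi^{s-1} \sin(\pi s/2) \Gamma(1-s) \zeta(1-s)$$
yields the identity
$$\frac{\zeta'(s)}{\zeta(s)} = \log(2\pi) + \frac{\pi}{2}\cot\!\left(\frac{\pi s}{2}\right) - \frac{\Gamma'(1-s)}{\Gamma(1-s)} - \frac{\zeta'(1-s)}{\zeta(1-s)}.$$
The proof then amounts to bounding each of the four pieces on $C \cap \{\sigma \le -1\}$ and collecting constants.

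For the zeta-quotient, since $\mathrm{Re}(1-s) \ge 2$, the Dirichlet series $-\zeta'(1-s)/\zeta(1-s) = \sum \Lambda(n) n^{s-1}$ converges absolutely and is bounded in modulus by $-\zeta'(2)/\zeta(2) < 0.57$. For the digamma factor, Stirling's asymptotic expansion gives an explicit estimate $|\Gamma'(w)/\Gamma(w)| \le \log|w| + c_0$ valid for $\mathrm{Re}(w) \ge 2$, and since $\sigma \le -1$ implies $|1-s| \le 2|s|$, this contributes at most $\log|s| + \log 2 + c_0$. For the cotangent, the choice that $U$ is a positive odd integer is crucial: on the vertical segment $C_3$, $\sigma = -U$ forces $\sin(\pi\sigma/2) = \pm 1$ and $\cos(\pi\sigma/2) = 0$, from which a direct calculation via $|\sin(x+iy)|^2 = \sin^2 x + \sinh^2 y$ gives $|\cot(\pi s/2)| = |\tanh(\pi t/2)| \le 1$. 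On the horizontal segments $C_2, C_4$ we have $|t| = T > 50$, so $|\cot(\pi s/2)| \le \coth(\pi T/2) < 1.001$.

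Putting the pieces together, on $C \cap \{\sigma \le -1\}$ we obtain
$$\left|\frac{\zeta'(s)}{\zeta(s)}\right| \le \log(2\pi) + \frac{\pi}{2}(1.001) + \log|s| + \log 2 + c_0 + 0.57,$$
and the plan is to verify numerically that the constants sum to less than $9$. With $\log(2\pi) \approx 1.84$, $\pi/2 \approx 1.57$, $\log 2 \approx 0.69$, plus the $0.57$ contribution and a moderate digamma constant $c_0$, there is ample slack.

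The main obstacle is neither the functional-equation manipulation nor the zeta bound, but pinning down an explicit Stirling-type bound for $|\Gamma'(w)/\Gamma(w)|$ that is clean enough to keep the total constant below $9$ for all relevant $s$, in particular for $s$ close to the corners of the rectangle $C$ where $|s|$ may be modest; one needs a version of Stirling valid uniformly on $\mathrm{Re}(w) \ge 2$ rather than the asymptotic form. Everything else is a routine assembly of elementary bounds, where the odd-integer choice of $U$ does exactly the work needed to tame $\cot(\pi s/2)$ on the left edge of the contour.
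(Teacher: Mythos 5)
Your proof is correct and follows essentially the same route as the paper: take the logarithmic derivative of the functional equation, bound $\zeta'/\zeta$ on the far-right half-plane by $-\zeta'(2)/\zeta(2)$, bound the digamma term via a Stirling-type formula, and use the odd choice of $U$ (plus $|t|>50$ on the horizontal segments) to control the trigonometric factor. The only cosmetic difference is that the paper works at $\mathrm{Re}(s)\geq 2$ and changes variables at the end (seeing a tangent), whereas you parametrize directly on $\sigma\leq -1$ via the asymmetric functional equation (seeing a cotangent); these are the same argument after relabeling.
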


\begin{proof}
We find it easier to bound $\zeta'(1-s)/\zeta(1-s)$ and then make the corresponding change of variable at the end. Consider the logarithmic derivative of the functional equation for $\zeta(s)$:
$$-\frac{\zeta'(1-s)}{\zeta(1-s)} = -\log 2\pi - \frac{1}{2} \pi \tan \frac{\pi s}{2} + \frac{\Gamma'(s)}{\Gamma(s)} + \frac{\zeta'(s)}{\zeta(s)}.$$
Let $\sigma \geq 2$ (so that $1-\sigma \leq -1$) and notice that $| \frac{1}{2} \pi \tan \frac{\pi s}{2}| < 2$ so long as $s$ is distanced by at least $1$ from odd integers on the real axis. We can then use
\begin{equation}  \label{gamma}
\frac{\Gamma'(s)}{\Gamma(s)} = \log s - \frac{1}{2s} - \int_0^{\infty} \frac{[u]-u+1/2}{(u+s)^2}du
\end{equation}
to bound $|\Gamma'(s)/\Gamma(s)|$ trivially. The result then follows by observing that
\begin{equation} \label{trivialzetabound}
\bigg| \frac{\zeta'(s)}{\zeta(s)} \bigg| \leq - \frac{\zeta'(2)}{\zeta(2)}  < \frac{3}{5},
\end{equation}
making the change of variable and putting it all together.
\end{proof}

We now look to the harder task of establishing a bound over the region that \textit{includes} the critical strip, as is essential for the estimation of $I_2$. 

\begin{lemma} \label{zetaaszeroes}
Let $s = \sigma +it$ where $\sigma > -1$ and $t >50$. Then
\begin{equation}
\frac{\zeta'(s)}{\zeta(s)} = \sum_{\rho} \bigg(\frac{1}{s-\rho}-\frac{1}{2+it-\rho} \bigg) + O^*(2 \log t),
\end{equation}
\end{lemma}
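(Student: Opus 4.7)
The natural approach is to derive this as a consequence of the Hadamard product representation of the entire function $\xi(s) = (s-1)\pi^{-s/2}\Gamma(s/2+1)\zeta(s) = e^{A+Bs}\prod_\rho(1-s/\rho)e^{s/\rho}$. Equating the logarithmic derivatives of the two expressions and solving for $\zeta'(s)/\zeta(s)$ yields the classical partial-fraction identity
$$\frac{\zeta'(s)}{\zeta(s)} = B + \sum_\rho\left(\frac{1}{s-\rho} + \frac{1}{\rho}\right) - \frac{1}{s-1} + \frac{1}{2}\log\pi - \frac{1}{2}\frac{\Gamma'(s/2+1)}{\Gamma(s/2+1)},$$
valid for $s$ away from the poles and zeros, with the sum over $\rho$ understood in the symmetric sense.

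The key step is to apply this identity at the two points $s$ and $2+it$ and subtract. The unknown constant $B$, the $\frac{1}{2}\log\pi$ term, and the conditionally convergent sum $\sum_\rho 1/\rho$ all cancel between the two evaluations, leaving
$$\frac{\zeta'(s)}{\zeta(s)} = \sum_\rho\left(\frac{1}{s-\rho} - \frac{1}{2+it-\rho}\right) + \frac{\zeta'(2+it)}{\zeta(2+it)} + R(s),$$
where $R(s)$ collects the pole differences $-1/(s-1) + 1/(1+it)$ together with the gamma difference $-\frac{1}{2}[\Gamma'(s/2+1)/\Gamma(s/2+1) - \Gamma'(2+it/2)/\Gamma(2+it/2)]$. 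What remains is the numerical task of showing $|\zeta'(2+it)/\zeta(2+it)| + |R(s)| < 2\log t$ whenever $t > 50$.

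The two simple fractions in $R(s)$ contribute at most $2/t < 1/25$, since both denominators have imaginary part at least $t > 50$. The quantity $\zeta'(2+it)/\zeta(2+it)$ is bounded in modulus by $-\zeta'(2)/\zeta(2) < 3/5$ via the absolutely convergent Dirichlet series, exactly as in (\ref{trivialzetabound}). The gamma difference is the delicate piece: substituting the representation (\ref{gamma}) at each of the arguments $z_1 = s/2+1$ and $z_2 = 2+it/2$ reduces the task to bounding $\log z_i$, $1/(2z_i)$, and the tail integral at each point. Since both $z_i$ have imaginary part $t/2$, one obtains $|\log z_i| \leq \log|z_i| + \pi/2$ with $|z_i| \geq t/2$, so each $\Gamma'/\Gamma$ evaluation is at most $\log t + O(1)$; halving and taking the difference then yields a total contribution of $\log t + O(1)$. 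The principal obstacle will be the book-keeping required to verify that all the additive constants — from Stirling-type remainders, from the $\pi/2$ in $|\log z|$, from $3/5$, and from the $1/t$ pieces — collectively fit inside the cushion $2\log t - \log t = \log t$. The hypothesis $t > 50$ is precisely what supplies $\log 50 > 3.9$, which is room enough to absorb these constants cleanly.
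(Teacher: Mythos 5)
Your proof takes essentially the same route as the paper: start from the classical partial-fraction expansion of $\zeta'/\zeta$ (the paper simply cites it as (12.8) of Davenport rather than re-deriving it from the Hadamard product of $\xi(s)$, but it is the same identity), evaluate at $s$ and at $2+it$, subtract so that $B$, $\tfrac{1}{2}\log\pi$ and $\sum_\rho 1/\rho$ cancel, and then bound the surviving simple-fraction, $\zeta'(2+it)/\zeta(2+it)$, and $\Gamma'/\Gamma$ pieces via (\ref{gamma}) and (\ref{trivialzetabound}). One small slip worth fixing: in controlling the Stirling term you write ``$|z_i|\geq t/2$'', but to bound $\log|z_i|$ from \emph{above} you want the companion upper bound $|z_i|\leq\sqrt{(\sigma/2+1)^2+t^2/4}$, which is comparable to $t/2$ precisely because $\sigma$ stays bounded in the relevant range.
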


\begin{proof}
We start with the equation (see 12.8 of Davenport \cite{davenport})
\begin{equation} \label{zetazeroes}
-\frac{\zeta'(s)}{\zeta(s)} = \frac{1}{s-1} - B -\frac{1}{2} \log \pi + \frac{\Gamma'(\frac{s}{2}+1)}{2 \Gamma(\frac{s}{2}+1)} - \sum_{\rho} \bigg(\frac{1}{s-\rho}+\frac{1}{\rho} \bigg)
\end{equation}
where $B = \gamma/2 - 1 +\frac{1}{2} \log 4 \pi$. Successively, we set $s_0 = 2+it$ and $s = \sigma +it$ and then find the difference between the two expressions. The terms involving the $\Gamma$-function are dealt with using (\ref{gamma}), whereas the rest are estimated either trivially or with (\ref{trivialzetabound}) to arrive at the result.
\end{proof}

We aim to estimate the sum in Lemma \ref{zetaaszeroes} by breaking it into two smaller sums $S_1$ and $S_2$, where $S_1$ ranges over the zeroes $\rho = \beta+i \gamma$ with $|\gamma-t| \geq 1$ and $S_2$ is over the remaining zeroes. Note that some optimisation could be done (see Pintz \cite{pintzdisproof}) on the size of our disk but -- as we will see in the next chapter -- the marginal profits are not worth the subsequent mopping of brow.
\begin{lemma} \label{usedbybryce}
Let $s = \sigma +it$, where $\sigma > -1$ and $t >50$. Then
$$S_1 =  \sum_{|t-\gamma| \geq 1} \bigg(\frac{1}{s-\rho}-\frac{1}{2+it-\rho} \bigg)  = O^*(16 \log t).$$
\end{lemma}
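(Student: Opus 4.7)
The plan is to use the standard partial-fraction trick to exploit cancellation, and then control the resulting sum by the density of zeros in short vertical intervals.

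First, I would rewrite each summand:
\begin{equation*}
\frac{1}{s-\rho} - \frac{1}{2+it-\rho} = \frac{2-\sigma}{(s-\rho)(2+it-\rho)}.
\end{equation*}
Since $\sigma > -1$, the numerator satisfies $|2-\sigma| < 3$. For the denominator, I bound $|s-\rho| \geq |t-\gamma|$ and $|2+it-\rho| \geq |t-\gamma|$ (the latter also satisfies $|2+it-\rho| \geq 2-\beta > 1$, which I may use to sharpen the bound for zeros close to the line $\mathrm{Im}=t$, but here they are excluded by the condition $|t-\gamma| \geq 1$). Hence each term is bounded by $3/(t-\gamma)^2$, and
\begin{equation*}
|S_1| \leq 3 \sum_{|t-\gamma|\geq 1} \frac{1}{(t-\gamma)^2}.
\end{equation*}

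Next I would group the zeros by integer distance from $t$: for each integer $n \geq 1$, let $Z_n$ denote the set of nontrivial zeros with $n \leq |t-\gamma| < n+1$. Then the contribution to the sum from $Z_n$ is at most $|Z_n|/n^2$. The crucial input is an explicit estimate of the form
\begin{equation*}
|Z_n| \leq A \log(t+n) + B
\end{equation*}
for some small explicit constants $A,B$; this is the standard bound on the number of zeros in a vertical interval of unit height (Davenport, Chapter 15), and an explicit version is available. Plugging this in gives
\begin{equation*}
|S_1| \leq 3 \sum_{n\geq 1} \frac{A\log(t+n)+B}{n^2},
\end{equation*}
and splitting at $n = t$ (using $\log(t+n) \leq \log(2t)$ for $n \leq t$ and $\log(t+n) \leq \log(2n)$ for $n > t$), the full sum evaluates to something of shape $C\log t$ for an explicit $C$, which after tidying yields the claimed bound $16\log t$. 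The hypothesis $t > 50$ is what lets me swallow the constant terms into the leading $\log t$ term.

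The main obstacle is the explicit constant $16$: this requires care in the explicit version of the zero-counting lemma and in the tail estimate of $\sum_n \log(t+n)/n^2$. The qualitative $O(\log t)$ bound is essentially immediate from the standard reference, but squeezing the constants to end up at $16$ will take a careful, if routine, optimisation of the splitting point between near and far zeros and of the constants in the zero-density input. There is no analytic difficulty beyond this bookkeeping, since the telescoping in the partial-fraction step has already removed the slowly-decaying behaviour of the individual terms $1/(s-\rho)$.
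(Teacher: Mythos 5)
Your opening step is identical to the paper's: the partial-fraction identity $\frac{1}{s-\rho}-\frac{1}{2+it-\rho}=\frac{2-\sigma}{(s-\rho)(2+it-\rho)}$, the bound $|2-\sigma|<3$, and $|s-\rho|,|2+it-\rho|\geq|t-\gamma|$ give $|S_1|\leq 3\sum_{|t-\gamma|\geq 1}(t-\gamma)^{-2}$. From there you diverge. You propose to estimate the remaining sum by binning zeros into unit vertical intervals and invoking an explicit count of the form $|Z_n|\leq A\log(t+n)+B$. The paper instead passes to $\sum_\rho\frac{6}{1+(t-\gamma)^2}$, takes real parts in the Hadamard-type identity (12.8 of Davenport) for $-\zeta'/\zeta$ at $s=2+it$, and uses $\mathrm{Re}\,\frac{1}{2+it-\rho}>\frac{1}{4+(t-\gamma)^2}$ together with $\mathrm{Re}\,\frac{1}{\rho}>0$ to obtain the \emph{single} explicit bound $\sum_\rho\frac{1}{4+(t-\gamma)^2}<\frac{2}{3}\log t$ for $t>50$; then $S_1<24\cdot\frac{2}{3}\log t=16\log t$ follows by elementary comparison. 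The paper's route is genuinely different: it never counts zeros in intervals and avoids summing an infinite series of logarithms, so the constant is pinned down in one stroke.

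Your approach would work but has two costs you should be aware of. First, the zero-counting input $|Z_n|\leq A\log(t+n)+B$ has to be made uniform in $n$, including the regime where $t-n$ is small or negative (the sum runs over all nontrivial zeros, with $\gamma$ of either sign); the paper's own unit-interval count (proved in the subsequent lemma using Trudgian's argument estimate plus verification against Odlyzko's tables) only applies for heights above $50$, so you would need a supplementary bound for low heights and for the negative-ordinate zeros. Second, you acknowledge that the constant $16$ requires optimisation you have not carried out; without it the proof is incomplete as a derivation of the stated explicit lemma, even though the $O(\log t)$ order is clear. So the gap is not conceptual but quantitative: the argument as written establishes $O(\log t)$, not $O^*(16\log t)$, and closing that gap via your route requires both the explicit counting lemma at all heights and a verified summation estimate, neither of which you supply.
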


\begin{proof}

We can estimate the summand as follows (see Davenport \cite[Ch. 15]{davenport}):
\begin{equation}
\bigg| \frac{1}{s-\rho} - \frac{1}{2+it-\rho} \bigg| = \frac{2 - \sigma}{| (s-\rho) (2+it-\rho)|}  < \frac{3}{(t - \gamma )^2}.
\end{equation} 
We then have that
$$S_1 < \sum_{|t - \gamma | \geq 1} \frac{3}{(t-\gamma)^2} \leq \sum_{\rho} \frac{6}{1+(t-\gamma)^2}.$$
By letting $\sigma = 2$, taking real parts in (\ref{zetazeroes}) and estimating as in the proof of Lemma \ref{zetaaszeroes} we have
$$\sum_{\rho} \text{Re} \bigg( \frac{1}{s-\rho} + \frac{1}{\rho} \bigg) <  \frac{2}{3} \log t$$
for $t > 50$. We then use the two simple facts
$$\text{Re} \bigg( \frac{1}{s-\rho} \bigg) = \frac{2 - \beta}{(2-\beta)^2+(t-\gamma)^2} > \frac{1}{4+(t-\gamma)^2}$$
and
$$\text{Re} \bigg( \frac{1}{\rho} \bigg) = \frac{\beta}{|\rho|^2} > 0$$
to get
$$\sum_{\rho} \frac{1}{4+(t-\gamma)^2} < \frac{2}{3} \log t.$$
Putting it all together we have
\begin{eqnarray*}
S_1 & < & \sum_{\rho} \frac{6}{1+(t-\gamma)^2}\\
&<& 24 \sum_{\rho} \frac{1}{4+(t-\gamma)^2} \\
& < & 16 \log t.
\end{eqnarray*}
\end{proof}

We now wish to estimate the remaining sum
$$S_2 == \sum_{|\gamma-t|<1} \bigg( \frac{1}{s-\rho} - \frac{1}{2+it - \rho} \bigg).$$
To do this, first recall that $N(T)$ denotes the number of zeroes of $\zeta(s)$ in the critical strip up to height $T$. Noting that $|2+it-\rho| > 1$, the contribution of the second term to the sum can be estimated trivially by 
$$N(t+1)-N(t-1),$$
that is, by the number of terms in the sum. Now we prove the following result.

\begin{lemma}
We have that
\begin{equation} \label{boundzerocount}
N(t+1) - N(t-1) < \log t
\end{equation}
for all $t>50$.
\end{lemma}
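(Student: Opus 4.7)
The plan is to extract a bound on the local density of nontrivial zeros from the partial fraction expansion of $\zeta'/\zeta$ that was already invoked in equation (\ref{zetazeroes}). First I would specialize (\ref{zetazeroes}) to $s = 2 + it$ and take real parts. The left-hand side, $-\mathrm{Re}(\zeta'(2+it)/\zeta(2+it))$, is bounded in absolute value by $|\zeta'(2)/\zeta(2)| < 3/5$ from (\ref{trivialzetabound}); the terms $1/(s-1)$, $B$, and $-\tfrac{1}{2}\log \pi$ contribute $O(1)$; and the $\Gamma$-factor $\Gamma'(s/2+1)/(2\Gamma(s/2+1))$, estimated at $2 + it/2$ via (\ref{gamma}), contributes $\tfrac{1}{2}\log(t/2) + O(1)$ for $t > 50$.

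Rearranging would give an explicit bound of the shape
\[
\sum_\rho \mathrm{Re}\!\left( \frac{1}{2+it-\rho} + \frac{1}{\rho} \right) < C\log t
\]
for a computable constant $C$ (and $t > 50$). Since every nontrivial zero satisfies $0 < \beta < 1$, we have $\mathrm{Re}(1/\rho) = \beta/|\rho|^2 > 0$, so this term may be discarded from the left-hand side. What remains is
\[
\sum_\rho \frac{2-\beta}{(2-\beta)^2 + (t-\gamma)^2} < C\log t.
\]

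To conclude, I would restrict the sum to the zeros $\rho$ with $|t-\gamma| \leq 1$, which are precisely those counted by $N(t+1) - N(t-1)$. For such zeros, $1 \leq 2-\beta \leq 2$ and $(t-\gamma)^2 \leq 1$, so each summand is bounded below by $1/5$. Hence
\[
\tfrac{1}{5}\bigl( N(t+1) - N(t-1)\bigr) < C\log t,
\]
giving $N(t+1) - N(t-1) < 5C \log t$. The main obstacle is purely quantitative: to land the constant at $1$ rather than merely $O(1)$, I would need to squeeze $5C < 1$, which means being careful with the explicit Stirling-type bound from (\ref{gamma}) for the $\Gamma'/\Gamma$ contribution and using the hypothesis $t > 50$ to absorb the $O(1)$ remainders into $\log t$ (since $\log 50 > 3.9$, each unit of $O(1)$ slack costs only a modest fraction of $\log t$). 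If the direct argument falls just short, one could tighten the lower bound on the summand by splitting the range $|t-\gamma|\leq 1$ into a smaller disk $|t-\gamma|\leq \delta$ (replacing $5$ by $4+\delta^2$) at the mild cost of a residual tail, which is the natural lever for achieving the stated constant $1$.
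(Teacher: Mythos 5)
Your approach is fundamentally different from the paper's, and it has a genuine quantitative gap that cannot be repaired within the method. The paper proves this lemma in two pieces: for $t > 250000$ it invokes Corollary~1 of Trudgian's explicit Riemann--von Mangoldt formula for $N(T)$ (which carries an explicit bound on $S(T) = N(T) - \frac{T}{2\pi}\log\frac{T}{2\pi e} - \frac{7}{8}$), and for $50 < t \leq 250000$ it runs a computational check against Odlyzko's tables of zeros, testing the slightly stronger inequality $N(t+1.01) - N(t-1) < \log t$ on a grid of step $0.01$ so that the claim extends to all real $t$ in the range. There is no partial-fraction argument in the paper's proof of this particular lemma.

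Your proposal, by contrast, is the textbook density bound extracted from (\ref{zetazeroes}) at $s = 2 + it$, and the obstacle you flag at the end is not a quantitative annoyance but a structural wall. The leading term on the right after taking real parts is $\operatorname{Re}\frac{\Gamma'}{2\Gamma}(2+it/2) \sim \frac{1}{2}\log t$, so the constant $C$ in your display is asymptotically $\frac{1}{2}$ and cannot be pushed lower. Meanwhile the loss factor from bounding each summand from below is at least $\frac{5}{2}$: even with the sharp minimum of $\frac{2-\beta}{(2-\beta)^2 + (t-\gamma)^2}$ over $1 \leq 2-\beta \leq 2$ and $(t-\gamma)^2 < 1$ (which is $\frac{2}{5}$, not the $\frac{1}{5}$ you use), you obtain $N(t+1) - N(t-1) < \frac{5}{4}\log t + O(1)$, and with your $\frac{1}{5}$ bound you get roughly $\frac{5}{2}\log t$. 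Either way the coefficient exceeds $1$ as $t \to \infty$, so the inequality fails to follow. The "shrink the disk" gambit does not rescue this: $N(t+1) - N(t-1)$ counts \emph{all} zeros with $|t - \gamma| < 1$, so passing to $|t-\gamma| \leq \delta < 1$ leaves a residual annulus whose zero count must still be bounded, and summing over such annuli only degrades the constant. The genuinely correct way to land on the constant $1$ is precisely what the paper does: use the full Riemann--von Mangoldt asymptotic, whose main contribution to $N(t+1)-N(t-1)$ is $\frac{1}{\pi}\log\frac{t}{2\pi} \approx 0.32\log t$, plus an explicit bound on the $S$-term, and then handle small $t$ by direct computation.
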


\begin{proof}
We can use Corollary 1 of Trudgian \cite{trudgianargument} with $T_0 = 50$ to verify that the bound holds as long as $t > 250000$. To prove it for the remaining range, we can use Odlyzko's tables \cite{odlyzko} of the zeroes of the Riemann zeta-function. A short algorithm written in Python reads in zeroes from the table and checks that the bound (\ref{boundzerocount}) holds in the remaining range. Specifically, the algorithm runs a check on the values of $t$ from 50 to 250000 in increments of $0.01$. To verify that the lemma is true for all values of $t$, we check the sharper inequality
$$N(t+1.01) - N(t-1) < \log t$$
at these discrete values and from this it follows that the result is true for all $t > 50$.
\end{proof}

At this point we have established the following result.

\begin{lemma}\label{bryce}
Let $s = \sigma +it$, where $\sigma > -1$ and $t >50$. Then
$$\frac{\zeta'(s)}{\zeta(s)} = \sum_{| t-\gamma|<1} \frac{1}{s-\rho} + O^*(19 \log t).$$
\end{lemma}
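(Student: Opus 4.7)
The plan is to assemble the result directly from the preceding lemmas by splitting the zero-sum at the disk $|t-\gamma|<1$ and accounting for each contribution.

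First I would invoke Lemma \ref{zetaaszeroes}, which already expresses $\zeta'(s)/\zeta(s)$ as the full zero-sum $\sum_{\rho}\bigl(\tfrac{1}{s-\rho}-\tfrac{1}{2+it-\rho}\bigr)$ plus an error of size at most $2\log t$. I would then decompose this sum as $S_1+S_2$ according to whether $|t-\gamma|\geq 1$ or $|t-\gamma|<1$. Lemma \ref{usedbybryce} immediately gives $S_1=O^*(16\log t)$, absorbing $16$ of the $19$ in the stated error.

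Next I would handle $S_2$ by separating its two pieces. The piece $\sum_{|t-\gamma|<1}\tfrac{1}{s-\rho}$ is exactly the main term kept in the conclusion, so nothing further is needed there. For the remaining piece $\sum_{|t-\gamma|<1}\tfrac{1}{2+it-\rho}$, I use that $|2+it-\rho|\geq 2-\beta\geq 1$ because $\beta<1$ for every nontrivial zero, hence each summand has modulus at most $1$. Therefore the piece is bounded by the number of zeroes in the short interval $[t-1,t+1]$, namely $N(t+1)-N(t-1)$, which by the bound (\ref{boundzerocount}) proved just above is less than $\log t$ for $t>50$.

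Combining the three error contributions gives $2\log t+16\log t+\log t=19\log t$, which is exactly the claimed bound. The only substantive step is the trivial bound on the $(2+it-\rho)^{-1}$ piece of $S_2$; everything else is a direct citation of earlier lemmas. No obstacle is anticipated, since both the zero-counting estimate and the large-distance bound have already been proved with room to spare.
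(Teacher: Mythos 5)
Your proposal is correct and follows the paper's argument step by step: Lemma \ref{zetaaszeroes} supplies the $2\log t$ error, Lemma \ref{usedbybryce} absorbs $16\log t$ from the far zeroes, and for the near zeroes you keep $\sum_{|t-\gamma|<1}(s-\rho)^{-1}$ as the main term while bounding $\sum_{|t-\gamma|<1}(2+it-\rho)^{-1}$ trivially via $|2+it-\rho|\geq 2-\beta>1$ and the zero count $N(t+1)-N(t-1)<\log t$, giving the total $19\log t$. This is precisely the decomposition and bookkeeping the paper uses.
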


Therefore, we are concerned with bounding the magnitude of the sum over zeroes in the above lemma. Of course, the problem here is that $s$ might be close to a zero $\rho$, and this will give us significant trouble. 

At this point, we search instead for a better value of $t$ nearby, say $t_0 \in (t-1,t+1)$, which will give a workable bound. We will use this in the next section to shift our horizontal line of integration to a better height.

\begin{lemma} \label{choicet}
Let $t> 50$. There exists $t_0 \in (t-1,t+1)$ that does not depend on $\sigma$ and such that
\begin{equation} \label{bound2}
\bigg| \sum_{|\gamma-t|<1} \frac{1}{(\sigma+it_0)-\rho} \bigg| < \log^2 t + \log t.
\end{equation}
\end{lemma}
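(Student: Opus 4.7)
The plan is to choose $t_0$ by a pigeonhole argument so that $t_0$ sits a definite distance away from every ordinate $\gamma$ in the window $(t-1,t+1)$, and then bound each term of the sum crudely by the reciprocal of that distance.

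First, I would appeal to the preceding lemma to set $N := N(t+1) - N(t-1) < \log t$, which is an upper bound on the number of zeros $\rho = \beta + i\gamma$ contributing to the sum (any zeros exactly on the boundary $|\gamma - t| = 1$ can be absorbed by shrinking the window by an arbitrarily small amount, which does not affect the bound). The key observation is that the counting bound is purely in terms of the ordinates $\gamma$ and has nothing to do with $\sigma$.

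Next, I would locate a suitable $t_0$ that depends only on the $\gamma$'s, and thus is automatically independent of $\sigma$. Define the ``forbidden set''
$$B := \bigcup_{|\gamma - t| < 1} \bigl(\gamma - \tfrac{1}{N+1},\, \gamma + \tfrac{1}{N+1}\bigr).$$
Its Lebesgue measure is at most $2N/(N+1) < 2$, which is strictly smaller than the length of the interval $(t-1, t+1)$. Hence there exists $t_0 \in (t-1, t+1) \setminus B$, and for this choice we have $|t_0 - \gamma| \geq 1/(N+1)$ for every ordinate $\gamma$ in the window.

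Finally, for each summand I would use the elementary bound
$$\bigl|(\sigma + it_0) - \rho\bigr| \geq |t_0 - \gamma| \geq \frac{1}{N+1},$$
so that each term contributes at most $N+1$. Since there are at most $N$ terms, one obtains
$$\left|\sum_{|\gamma - t| < 1} \frac{1}{(\sigma + it_0) - \rho}\right| \leq N(N+1) = N^2 + N < \log^2 t + \log t,$$
as required. There is no serious obstacle here; the argument is essentially a direct consequence of the previous zero-counting lemma combined with a measure/pigeonhole estimate. The only mild care needed is to ensure the boundary convention for $|\gamma - t| < 1$ and the strictness of the inequality $N < \log t$ are handled consistently, which is routine.
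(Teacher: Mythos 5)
Your proof is correct and takes essentially the same route as the paper's: both use the zero-counting bound $N(t+1)-N(t-1)<\log t$ together with a pigeonhole argument to find a $t_0$ whose distance from every ordinate $\gamma$ in the window is at least $1/(N+1)$, and then bound the sum trivially by (number of terms) $\times$ (reciprocal of that distance). The paper phrases the pigeonhole step as ``the ordinates split the interval into at most $N+1$ pieces, so one piece has length at least $2/(N+1)$; take its midpoint,'' whereas you delete $\tfrac{1}{N+1}$-neighbourhoods of the ordinates and invoke a measure estimate, but these are the same argument and yield the same final bound.
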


\begin{proof}
By $(\ref{boundzerocount})$, there are at most $ \log t$ zeroes in the sum. The imaginary ordinates of these zeroes partition the region of the strip into no more than ($\log t +1$) zero-free sections.  Trivially, there will always be such a section of height 
$$\frac{2}{ \log t + 1}$$
and choosing the midpoint, say $t_0$, of this section will guarantee a distance of 
$$\frac{1}{ \log t +1}$$
from any zero. As such, we have, letting $s = \sigma + i t_0$, that
$$\sum_{|\gamma-t|<1} \frac{1}{|s- \rho |} \leq \sum_{|\gamma-t|<1} \frac{1}{|\gamma - t|} \leq \sum_{|\gamma-t|<1} ( \log t +1) \leq   \log^2 t +  \log t.$$
\end{proof}

Finally, we can put the previous three lemmas together to get the following.

\begin{lemma}
Let $\sigma > - 1, t > 50$. Then there exists $t_0 \in (t-1,t+1)$ such that for every $\sigma > -1$ we have
$$\bigg| \frac{\zeta'(\sigma+it_0)}{\zeta(\sigma+it_0)} \bigg| < \log^2 t+ 20\log t.$$
\end{lemma}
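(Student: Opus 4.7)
The plan is to combine Lemma \ref{bryce} and Lemma \ref{choicet} via the triangle inequality, with a choice of $t_0$ coming from the pigeonhole construction already carried out in Lemma \ref{choicet}. First, I would use Lemma \ref{choicet} to select a height $t_0 \in (t-1, t+1)$ which stays at distance at least $1/(\log t + 1)$ from every zero ordinate $\gamma$ in the window near $t$. The construction rests on Lemma \ref{boundzerocount}, which caps the number of such zeros at $\log t$, together with the pigeonhole principle applied to the $\log t + 1$ zero-free gaps that these ordinates carve out of the interval.

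Next, I would apply Lemma \ref{bryce} at the point $s_0 = \sigma + it_0$ to obtain
$$\frac{\zeta'(s_0)}{\zeta(s_0)} = \sum_{|t_0 - \gamma| < 1} \frac{1}{s_0 - \rho} + O^{*}(19 \log t_0).$$
For each zero $\rho = \beta + i\gamma$ in this sum we have $|s_0 - \rho| \geq |t_0 - \gamma| \geq 1/(\log t + 1)$, and by Lemma \ref{boundzerocount} there are at most $\log t$ terms, so the finite sum is bounded in absolute value by $\log^2 t + \log t$, exactly as in the proof of Lemma \ref{choicet}. The triangle inequality, together with the trivial estimate $\log t_0 \leq \log t + O(1/t)$ (since $t_0$ lies within $1$ of $t$ and $t > 50$), then yields
$$\left| \frac{\zeta'(\sigma + it_0)}{\zeta(\sigma + it_0)} \right| < \log^2 t + \log t + 19 \log t_0 \leq \log^2 t + 20 \log t,$$
which is the claimed bound.

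The main obstacle is a minor bookkeeping issue: the summation window $|t_0 - \gamma| < 1$ appearing in Lemma \ref{bryce} is shifted slightly from the window $|\gamma - t| < 1$ used in Lemma \ref{choicet}, and the hypothesis $t_0 > 50$ required by Lemma \ref{bryce} is only guaranteed when $t > 51$. Both issues are cosmetic. The first is resolved either by rerunning the pigeonhole argument on the slightly enlarged interval $(t-2, t+2)$ (which still contains only $O(\log t)$ zeros by Lemma \ref{boundzerocount}) or by observing that $t_0$, being the midpoint of a zero-free gap, avoids the relevant extra ordinates as well. The second is resolved by sharpening the hypothesis to $t \geq 51$, with the sliver $50 < t < 51$ handled by direct appeal to Odlyzko's tables of zeros; neither adjustment affects the stated logarithmic constant.
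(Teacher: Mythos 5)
Your proposal follows exactly the route the paper takes: the lemma is stated in the paper without a displayed proof (it is introduced with ``we can put the previous three lemmas together''), and the intended argument is precisely your combination of Lemma \ref{bryce} at $s_0=\sigma+it_0$ with the bound and choice of $t_0$ from Lemma \ref{choicet}, via the triangle inequality. You are actually more careful than the paper in flagging the two bookkeeping points (the summation window shifting from $|\gamma-t|<1$ to $|\gamma-t_0|<1$, and the hypothesis $t_0>50$), and your proposed repairs are sound -- in particular, your observation that $t_0$, being the midpoint of a zero-free gap inside $(t-1,t+1)$, is automatically at distance at least $1/(\log t+1)$ from any ordinate outside $(t-1,t+1)$ as well, is exactly what makes the enlarged window harmless. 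The one place where the arithmetic deserves a second look is the very last line: if $t_0>t$ then $19\log t_0$ slightly exceeds $19\log t$ (by at most $19/t$), so the inequality $\log^2 t+\log t+19\log t_0\le\log^2 t+20\log t$ does not hold verbatim; one must harvest a little slack from elsewhere, for instance from the strict integer inequality in the zero count $N(t_0+1)-N(t_0-1)<\log t_0$. This imprecision is inherited from the paper's own terse treatment rather than a flaw in your reasoning.
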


That is, if our contour is somewhat close to a zero, we can shift it slightly to a line where we have good bounds.

We now bound the error term $E(x,T,U)$ in $(\ref{bigerror})$, by using our bounds and estimating each integral trivially. Using Lemma \ref{bound1}, we have
\begin{eqnarray*}
|I_3| & = &\frac{1}{2\pi} \bigg| \int_{-U-iT}^{-U+iT} - \frac{\zeta'(s)}{\zeta(s)} \frac{x^s}{s} ds \bigg|\\ \\ 
& < &  \int_{-T}^{T} \frac{9 + \log \sqrt{U^2+T^2}}{2 \pi x^U T}dt \\ \\
&=&   \frac{18+2\log \sqrt{U^2+T^2}}{2 \pi x^U}.
\end{eqnarray*}
We save this, for soon we will combine our estimates and bound them in unison upon an appropriate choice for $U$. Consider now the problem of estimating $I_2$, and the issue that $T$ might be close to the ordinate of a zero. From Lemma \ref{choicet}, there exists some $T_0 \in (T-1,T+1)$ that we should integrate over instead. We thus aim to shift the line of integration from $C_2$ to 
$$C_2 ' = [-U+iT_0, c +iT_0].$$

It follows from Cauchy's theorem that
$$|I_2| < \sum_{T-1 < \gamma < T+1} \bigg| \frac{x^{\rho}}{\rho} \bigg| + |I_5| + |I_6| +|I_7|+|I_8|$$
where
\begin{eqnarray*}
I_5 & = & \frac{1}{2 \pi i} \int_{-U+iT}^{-U+iT_0} - \frac{\zeta'(s)}{\zeta(s)} \frac{x^s}{s} ds \hspace{0.7in} I_6  =   \frac{1}{2 \pi i}  \int_{-U+iT_0}^{-1+iT_0} - \frac{\zeta'(s)}{\zeta(s)} \frac{x^s}{s} ds\\ \\
I_7 & = &  \frac{1}{2 \pi i}  \int_{-1+iT_0}^{c+iT_0} - \frac{\zeta'(s)}{\zeta(s)} \frac{x^s}{s} ds \hspace{0.8in} I_8  =   \frac{1}{2 \pi i}  \int_{c+iT}^{c+iT_0} - \frac{\zeta'(s)}{\zeta(s)} \frac{x^s}{s} ds.
\end{eqnarray*}
From $(\ref{boundzerocount})$, we can estimate the sum by
$$\sum_{T-1 < \gamma < T+1} \bigg| \frac{x^{\rho}}{\rho} \bigg| < \sum_{T-1 < \gamma < T+1} \frac{x}{T-1} < \frac{2 x \log T}{T-1}.$$

We can bound $I_5$ in the same way as $I_3$ to obtain
$$|I_5| <\frac{18+2\log \sqrt{U^2+(T+1)^2}}{2 \pi x^U T}.$$
Bounding $I_6$ is done using Lemma \ref{bound1}:
$$|I_6| < \frac{9+\log \sqrt{U^2+(T+1)^2}}{2 \pi x (T-1)}$$
We also use Lemma \ref{choicet} to get
$$|I_7| < \frac{e}{2 \pi (T-1)} (\log^2 (T+1) + \log (T+1)).$$
To get an upper bound for $I_8$, we note that $c = 1+ 1/\log x$ and then (\ref{fordlemma}) gives us that
$$\bigg| \frac{\zeta'(s)}{\zeta(s)} \bigg| \leq \bigg| \frac{\zeta'(c)}{\zeta(c)} \bigg| < \log x.$$ 
Then, estimating trivially gives us that
$$|I_8| < \frac{ex \log x}{\pi(T-1)}.$$

Throwing all of our estimates for the terms in $(\ref{bigerror})$ together, implanting the information that $T \leq x$, $x > e^{60}$ and letting $U$ be equal to the odd integer closest to $x$ we obtain Theorem \ref{explicitformula}. In the next chapter, we will apply this result to the problem of primes between cubes.


\chapter{Primes in Short Intervals}
\label{chapter3}

\begin{quote}
\textit{``That's the reason they're called lessons,'' the Gryphon remarked: ``because they lessen from day to day.''}
\end{quote}

\section{Primes between consecutive cubes}

Legendre's conjecture is the assertion that there is at least one prime between any two consecutive squares. Confirmation of this seems to be out of reach, for applying modern techniques on the assumption of even the Riemann hypothesis does not suffice in forming a proof (see Davenport \cite{davenport} for a discussion). It is thus the aim of this first section to study the weaker problem of primes between cubes, where some progress has already been made.

Consider first the more general problem of showing the existence of at least one prime in the interval $(x,x+x^{\theta})$ for some $\theta \in (0,1)$ and for all sufficiently large $x$. These are \textit{short} intervals; generally, any interval of the form $(x,x+h(x))$ is said to be short if $h/x \rightarrow 0$ as $x \rightarrow \infty$.

In 1930, Hoheisel \cite{hoheisel} was able to solve the problem for $\theta = 1-1/33000$, that is, that there will be a prime in the interval
$$(x,x+x^{32999/33000})$$
for all sufficiently large $x$. He did this by using the Riemann--von Mangoldt explicit formula in conjunction with an appropriate zero-free region and zero-density estimate. Using Hoheisel's ideas, Ingham \cite{ingham} was able to prove a more general theorem, specifically that if one has a bound of the form
$$\zeta(1/2+it) = O(t^c )$$
for some $c>0$, then one can take
$$\theta = \frac{1+4c}{2+4c}+\epsilon.$$
Notably, the bound for $\zeta(1/2+it)$ can be used to construct a zero-density estimate, and this in turn furnishes a value for $\theta$ through a subsequent application of the explicit formula. Hardy and Littlewood were able to give a value of $c=1/6+\epsilon$, which corresponds to $\theta = 5/8+\epsilon$. From this, one sets $x = n^3$ and it follows that for all sufficiently large $n$ there exists a prime in the interval
$$(x,x+x^{5/8}) = (n^3,n^3+n^{15/8+ \epsilon}) \subset (n^3, (n+1)^3).$$
That is, with finitely many exceptions, there is a prime between any two consecutive cubes. The reader should, however, note that consideration of the interval
$$(x,x+3x^{2/3})$$
is sufficient for primes between cubes and as such this is the interval we use throughout this section. Expanding the expression $(n+1)^3$ shows that we could use the slightly larger interval
$$(x,x+3x^{2/3}+3 x^{1/3} + 1),$$
however, the difference is negligible for the large values of $x$ in which we deal.

The purpose of this section is to make explicit the result on primes between consecutive cubes, in that we determine a numerical lower bound beyond which this result holds. By working through the paper of Ingham \cite{ingham}, we can do this thanks to Ford's \cite{ford} explicit zero-free region, Ramar\'{e}'s \cite{ramare} explicit zero-density theorem and our \textit{actually} explicit formula (see Theorem \ref{explicitformula}). We shall bring these together to prove our main theorem.

\begin{theorem} \label{primecube}
There is a prime between $n^3$ and $(n+1)^3$ for all $n \geq \exp(\exp(33.3))$.
\end{theorem}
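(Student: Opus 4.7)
The plan is to show that $\psi(x+y)-\psi(x)>0$ for $x=n^{3}$ and $y=3x^{2/3}$ (the interval $(n^{3},(n+1)^{3}]$ differs from $(x,x+y]$ by negligible lower-order terms $3x^{1/3}+1$, which are absorbed at the end). Since the contribution of proper prime powers to $\psi(x+y)-\psi(x)$ is at most $O(x^{1/6}\log x)$, positivity of this difference produces a genuine prime in the interval.

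Applying Theorem \ref{explicitformula} at $x$ and $x+y$ (shifted infinitesimally so that both endpoints are half-odd integers, which changes $\psi$ by at most one prime) and subtracting yields
$$\psi(x+y)-\psi(x) \;=\; y \;-\; \sum_{|\gamma|<T}\frac{(x+y)^{\rho}-x^{\rho}}{\rho} \;+\; O^{*}\!\left(\frac{4x\log^{2}x}{T}\right).$$
Writing the summand as an integral, $\bigl((x+y)^{\rho}-x^{\rho}\bigr)/\rho = \int_{x}^{x+y}u^{\rho-1}\,du$, and using $|u^{\rho-1}|=u^{\beta-1}\le x^{\beta-1}$ (valid since $\beta<1$ makes $u^{\beta-1}$ decreasing), each term is bounded in modulus by $y\,x^{\beta-1}$. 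The problem therefore reduces to establishing
$$S(x,T) \;:=\; \sum_{|\gamma|<T} x^{\beta-1} \;<\; 1 \;-\; \frac{4x^{1/3}\log^{2}x}{3T}.$$

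Bounding $S(x,T)$ is the core of the argument. I would split the zeros according to $\beta$. The contribution from $\beta\le\frac{1}{2}$ is at most $x^{-1/2}N(T)\ll x^{-1/2}T\log T$, which is harmless once $T\le x^{1/3+\epsilon}$. For $\beta>\frac{1}{2}$, Ford's explicit Korobov--Vinogradov zero-free region confines $\beta$ to $\beta\le 1-\eta(|\gamma|)$ with $\eta(\gamma)\asymp c(\log\gamma)^{-2/3}(\log\log\gamma)^{-1/3}$, so I would write
$$\sum_{\substack{|\gamma|<T \\ \beta>1/2}} x^{\beta-1} \;=\; -\!\int_{1/2}^{1-\eta(T)} x^{\sigma-1}\,dN(\sigma,T),$$
integrate by parts, and insert Ramar\'e's explicit zero-density estimate for $N(\sigma,T)$. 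After optimising, the dominant term is roughly $x^{-\eta(T)}$ times polynomial factors in $\log x$, and balancing this against the explicit-formula remainder fixes $T$ of size $x^{1/3}$ (up to logs). The inequality $S(x,T)<1$ then collapses to one of the rough shape
$$\exp\!\left(-\,c\,(\log x)^{1/3}(\log\log x)^{-1/3}\right)(\log x)^{A} \;<\; 1,$$
which holds as soon as $\log\log x$ exceeds roughly $33$; tracking the constants from Ford's and Ramar\'e's theorems sharpens this to $n\ge\exp(\exp(33.3))$.

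The main obstacle is numerical rather than structural. The Korobov--Vinogradov zero-free region only saves a factor $\exp\!\bigl(-c(\log T)^{1/3}(\log\log T)^{-1/3}\bigr)$, and with Ford's value of $c$ this saving dominates the polynomial losses from the zero-density integration by parts only once $\log\log x$ has climbed past the mid-thirties. The individual steps---the crude $x^{-1/2}T\log T$ bound, the Stieltjes integration by parts, Ramar\'e's substitution, and the choice $T\asymp x^{1/3}\log^{2}x$---are classical; the delicate work is keeping every constant explicit so that the final threshold is $33.3$ rather than something unpleasantly larger.
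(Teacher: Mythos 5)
Your proposal is essentially the paper's proof: apply the explicit formula (Theorem~\ref{explicitformula}), bound the zero-sum by $h\sum_{|\gamma|<T}x^{\beta-1}$, rewrite that sum in terms of $N(T)$ and $N(\sigma,T)$, insert Ford's explicit zero-free region and Ramar\'e's explicit zero-density estimate, and tune $T$ and the constants numerically. The only cosmetic differences are that the paper splits the $\sigma$-integral at $5/8$ rather than $1/2$ (Ramar\'e's Lemma~\ref{ramaredensity} requires $\sigma\geq 0.52$ and is the relevant bound only above $5/8$) and parametrises $T$ via $x/((3T)^{8/3}\log^2 T)=\exp(\log^k x)$ with a tunable exponent $k\in(2/3,1)$, which at the relevant scale of $x$ indeed gives $T$ of size roughly $x^{1/3}$ up to subpolynomial factors, consistent with your rough account.
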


We should note that a result has been given by Cheng \cite{cheng}, in which he purports to prove Theorem \ref{primecube} for the range $n \geq \exp(\exp(15))$. We should note, however, that he establishes the result for $n^3 \geq \exp(\exp(45))$ and then incorrectly infers that $n \geq \exp(\exp(15))$. There are some other errors also, notably in the proof of Theorem 3 in his paper \cite{cheng}, the first inequality sign is backwards and he has used a sum over prime powers instead of the appropriate sum over primes.

Before we launch into our proof of Theorem \ref{primecube}, we will prove a result from the other direction. The following lemma explains our earlier optimism in developing Theorem \ref{explicitformula} for the range~$x>e^{60}$.

\begin{lemma}
There is a prime in the interval $(x,x+3x^{2/3})$ for all $x \leq e^{60}$.
\end{lemma}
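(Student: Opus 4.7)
The plan is to reduce the statement to the explicit short-interval results of Ramar\'{e} and Saouter \cite{ramaresaouter}, together with direct verification on an initial range. Ramar\'{e} and Saouter tabulate, for various positive integers $\Delta$, an explicit threshold $x_{0}(\Delta)$ with the property that every interval of the form $(y(1-\Delta^{-1}),y]$ with $y\geq x_{0}(\Delta)$ contains a prime. To produce a prime in $(x,x+3x^{2/3})$, I would set $y=x+3x^{2/3}$ and apply their theorem to this $y$: the shifted interval $(y(1-\Delta^{-1}),y]$ sits inside $(x,x+3x^{2/3}]$ as soon as
\[
\frac{y}{\Delta}\;\leq\;3x^{2/3},
\]
i.e.\ $\Delta\geq y/(3x^{2/3})$, which for $x\leq e^{60}$ is certainly satisfied if $\Delta$ is of size at least $x^{1/3}/3 \leq e^{20}/3$.

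The next step is therefore to select a value of $\Delta$ from the Ramar\'{e}--Saouter tables that is large enough to meet this constraint at $x=e^{60}$ while having its associated threshold $x_{0}(\Delta)$ well below $e^{60}$; such an entry is available, since their paper is designed precisely to cover this intermediate territory. With this choice, the lemma follows for every $x$ in the range $[x_{0}(\Delta),\,e^{60}]$.

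For the remaining piece $x<x_{0}(\Delta)$, I would appeal to the existing tabulations of maximal prime gaps (of the Nicely/Oliveira e Silva type), which have exhaustively verified that up to bounds far exceeding $x_{0}(\Delta)$ no prime gap is anywhere near $3x^{2/3}$ in size; in particular a handful of trivial case distinctions handles very small $x$ where $3x^{2/3}$ is not yet large.

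The main obstacle is entirely bookkeeping: matching the Ramar\'{e}--Saouter parameter $\Delta$ so that both the inclusion $(y(1-\Delta^{-1}),y]\subset(x,x+3x^{2/3}]$ and the threshold condition $x_{0}(\Delta)$ are compatible with the target range $x\leq e^{60}$. There is no new analytic content; all the work has been done in the cited references, so the proof should amount to a short calculation verifying the two inequalities for a concrete admissible $\Delta$.
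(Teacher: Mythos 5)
Your identification of Ramar\'{e}--Saouter as the key input and your derivation of the inclusion condition $\Delta \geq y/(3x^{2/3})$ (with $y = x + 3x^{2/3}$) match the paper exactly. The thesis proof is terse, but where your plan and the paper's diverge is in the \emph{shape} of the argument. You propose to find a \emph{single} table row $(x_0(\Delta),\Delta)$ that simultaneously (i) has $\Delta \gtrsim 1 + e^{20}/3 \approx 1.6\times 10^{8}$ so that it satisfies the inclusion condition at the hardest point $x=e^{60}$, and (ii) has a threshold $x_0(\Delta)$ small enough that the tail $x<x_0(\Delta)$ can be handed off to prime-gap tables. But these two requirements pull in opposite directions: in the Ramar\'{e}--Saouter table the admissible $\Delta$ grows with $x_0$, so the rows with large $\Delta$ sit at large thresholds, and there is no guarantee that any one row meets both conditions. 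You assert ``such an entry is available'' without verifying it against the table; that is precisely the bookkeeping the lemma is about, and it is the one step that cannot be waved away. The paper's phrase ``one simply works through the table whilst checking that the Ramar\'{e}--Saouter interval is contained in the short interval'' signals a row-by-row case analysis: since the condition $\Delta \geq 1 + x^{1/3}/3$ is increasing in $x$, on each block $[x_0^{(i)},x_0^{(i+1)})$ one uses the row $\Delta_i$ and checks the inequality at the right endpoint $x_0^{(i+1)}$, stepping down through the table until the threshold falls inside the computationally verified range. Your proposal would only be correct if the table has a row combining a large $\Delta$ with a small threshold; absent that, the single-$\Delta$ plan fails at $x$ near $e^{60}$ and you must perform the same row-by-row checking the paper does. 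You should either exhibit a concrete admissible row or fall back to the multi-row case analysis.
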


\begin{proof}
Theorem 2 in Ramar\'{e} and Saouter's paper \cite{ramaresaouter} states that there is a prime in the interval 
$$(x(1-\Delta^{-1}),x]$$
for all $x > x_0$ and where $\Delta$ is a function of $x_0$ as given in Table 1 of their paper. It is a straightforward task to use this table to verify that there is a prime in $(x,x+3x^{2/3})$ for all $x \leq e^{60}$. One simply works through the table whilst checking that the Ramar\'{e}--Saouter interval is contained in the short interval. 
\end{proof}

Finally, we should acknowledge the striking result of Baker, Harman and Pintz \cite{bakerharmanpintz}, that the interval $(x,x+x^{0.525})$ contains a prime for all sufficiently large $x$. This is tantalisingly close to $\theta = 1/2$, which would furnish a proof of Legendre's conjecture with at most finitely many exceptions. The authors also note that their result is effective, though furnishing an explicit result would surely make for quite a large project.

To begin the proof of Theorem \ref{primecube}, we define the Chebyshev $\theta$-function as
$$\theta(x) = \sum_{p \leq x} \log p.$$
This is similar to Chebyshev's $\psi$-function, though we have removed the contribution owing to the powers of primes. Consider that 
$$\theta_{x,h} = \theta(x+h)-\theta(x) = \sum_{x < p \leq x+h} \log p$$
is positive if and only if there is at least one prime in the interval $(x,x+h]$. Many questions involving the primes can be phrased in terms of $\theta_{x,h}$. For example, the twin prime conjecture --  there are infinitely many primes $p$ such that $p+2$ is also prime -- is equivalent to $\theta_{p,2}$ taking on a positive value infinitely often where $p$ is a prime. 

Clearly, we set $h = 3 x^{2/3}$ to tackle the problem of primes between cubes. Then, if $\theta_{x,h} > 0$ for all $x > x_0$, there is a prime in the interval $(x,x+3x^{2/3}]$ for all $x > x_0$. If we then set $x = n^3$, we have that there is a prime in the interval $(n^3, (n+1)^3]$ for all integers $n > n_0 = x_0^{1/3}$. It is our intention to determine explicitly a value for $x_0$ and thus $n_0$. 

We call on our explicit formula. Substituting $(x+h)$ and then $x$ into Theorem \ref{explicitformula} and taking the difference, we find that: 
\begin{equation} \label{psiinterval}
\psi(x+h) - \psi(x) > h - \bigg| \sum_{|\gamma| < T} \frac{(x+h)^{\rho} - x^{\rho}}{\rho}\bigg| - \frac{4 (x+h) \log^2 (x+h)}{T}.
\end{equation}
Whilst the above will tell us information about prime powers, we are interested in primes. We thus require the following lemma, which is a combination of Proposition 3.1 of Dusart \cite{dusart} and Corollary 2 of Platt and Trudgian \cite{platttrudgian}.

\begin{lemma} \label{psitheta}
Let $x \geq 121$. Then
$$0.9999  x^{1/2} < \psi(x) - \theta(x) < (1+7.5 \cdot 10^{-7}) x^{1/2} + 3 x^{1/3}.$$ 
\end{lemma}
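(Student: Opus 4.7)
The plan is to use the decomposition
$\psi(x) - \theta(x) = \sum_{k=2}^{\lfloor \log_2 x \rfloor} \theta(x^{1/k})$,
which follows by grouping the prime-power terms in $\psi(x)$ according to the exponent $k$ (noting that the sum terminates once $x^{1/k} < 2$). The $k=2$ term $\theta(x^{1/2})$ dominates; the $k=3$ term contributes at the scale $x^{1/3}$; and all higher terms are of strictly smaller order. The two bounds in the lemma are then produced by applying explicit estimates for $\theta(y)$ to each term.

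For the lower bound, since every $\theta(x^{1/k})$ is non-negative, it is enough to bound the single term $\theta(x^{1/2})$. The Platt--Trudgian explicit prime number theorem for $\theta$ provides an inequality of the form $\theta(y) > (1-\delta)y$ for all $y$ in a wide range, with a very small $\delta$. Taking $y = x^{1/2}$ and choosing $\delta$ small enough gives $\theta(x^{1/2}) > 0.9999 \, x^{1/2}$ whenever $x \geq 121$; for the few small values of $y$ where the Platt--Trudgian range does not directly apply, one verifies the inequality by direct computation of $\theta(y)$.

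For the upper bound I would split the sum as $\theta(x^{1/2}) + \theta(x^{1/3}) + \sum_{k \geq 4} \theta(x^{1/k})$ and bound each piece separately. The Platt--Trudgian bound gives $\theta(x^{1/2}) < (1 + 7.5 \cdot 10^{-7}) x^{1/2}$, producing the leading term on the right. For $\theta(x^{1/3})$ one uses a Dusart-type explicit Chebyshev bound $\theta(y) \leq (1+\eta) y$ with $\eta$ small. Finally, for the tail, each term is at most $\theta(x^{1/4})$, and there are at most $\log_2 x$ non-vanishing terms, so a crude global bound $\theta(y) \leq c y$ (again from Dusart) gives a tail of size $O((\log x)\, x^{1/4})$, which is absorbed into the slack between $(1+\eta) x^{1/3}$ and $3 x^{1/3}$ for sufficiently large $x$.

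The main obstacle is propagating the bound all the way down to $x = 121$: for small $x$ the higher-power terms are a non-negligible fraction of $x^{1/3}$, and the asymptotic smallness of the tail cannot be invoked. This is handled by (i) choosing the numerical constants $\delta, \eta$ conservatively in the cited Platt--Trudgian and Dusart results, and (ii) checking the inequalities directly on the short initial range where the asymptotic estimates are not yet sharp enough, using that $\psi(x) - \theta(x)$ is an explicit finite sum of logarithms of primes. This is essentially bookkeeping, but it is the only delicate step in the argument.
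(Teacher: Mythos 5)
Your decomposition $\psi(x)-\theta(x)=\sum_{k\ge 2}\theta(x^{1/k})$ is the right starting point, and the upper-bound side of your argument is in principle workable. The lower-bound step, however, contains a concrete error. You reduce the claim to $\theta(x^{1/2})>0.9999\,x^{1/2}$ for all $x\ge 121$ by discarding the higher terms as merely non-negative, and assert that the residual cases can be dispatched by a ``direct computation of $\theta(y)$ for a few small $y$.'' But $\theta(y)>0.9999\,y$ is \emph{false} for every $y$ from $11$ up to somewhere in the region of $10^8$--$10^9$: for instance $\theta(11)\approx 7.75$ against $0.9999\cdot 11\approx 11.00$, and $\theta(10^5)/10^5\approx 0.99685$, $\theta(10^7)/10^7\approx 0.99952$, $\theta(10^8)/10^8\approx 0.99988$ are all still short of $0.9999$. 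Translated to $x=y^2$, this is a range extending to roughly $10^{17}$ on which your reduced inequality simply does not hold, so no finite check can rescue it. The lemma is nevertheless true there only because the terms you threw away are essential: at $x=121$, $\theta(x^{1/3})+\theta(x^{1/4})+\theta(x^{1/5})+\theta(x^{1/6})\approx 1.79+1.79+0.69+0.69\approx 4.97$ lifts $\psi(121)-\theta(121)\approx 12.72$ past the target $0.9999\cdot 11\approx 11.00$, whereas $\theta(11)\approx 7.75$ alone is nowhere near.

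A correct proof must therefore retain lower bounds on at least $\theta(x^{1/3})$ and $\theta(x^{1/4})$ in the regime where $\theta(x^{1/2})$ alone falls short, and couple this with a computation of the minimum of $(\psi(x)-\theta(x))/\sqrt x$ over the remaining small range of $x$. For context, the thesis does not actually prove this lemma: the sentence preceding it states that the inequalities are ``a combination of Proposition~3.1 of Dusart and Corollary~2 of Platt and Trudgian,'' with Dusart supplying the shape of both bounds (and the $0.9999$ and $3x^{1/3}$ pieces) and Platt--Trudgian the sharp upper constant $1+7.5\cdot 10^{-7}$. Your attempt to reprove it from scratch is more ambitious than what the text supplies, but as written the lower-bound step does not go through.
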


An application of this lemma to (\ref{psiinterval}) gives us that
\begin{eqnarray} \label{big}
\theta_{x,h} & > & h - \bigg| \sum_{|\gamma| < T} \frac{(x+h)^{\rho} - x^{\rho}}{\rho}\bigg| - \frac{4 (x+h) \log^2 (x+h)}{T}  \nonumber \\ 
&-& (1+7.5 \cdot 10^{-7}) (x+h)^{1/2}-3(x+h)^{1/3} + 0.9999 x^{1/2}.
\end{eqnarray}
It remains to estimate the sum in this inequality, choose $T=T(x)$ appropriately and then find $x_0$ such that $\theta_{x,h}$ is positive for all $x>x_0$. We define
$$S = \bigg| \sum_{|\gamma| < T} \frac{(x+h)^{\rho} - x^{\rho}}{\rho}\bigg|.$$
It follows that
\begin{eqnarray*}
S & = & \bigg| \sum_{|\gamma| < T} \int_{x}^{x+h} t^{\rho-1} \bigg| \leq  \sum_{|\gamma| < T} \int_{x}^{x+h} t^{\beta-1}  \leq h \sum_{| \gamma| < T} x^{\beta-1}.
\end{eqnarray*}
Now, the identity
\begin{eqnarray*}
\sum_{|\gamma| < T} ( x^{\beta-1} - x^{-1}) & = & \sum_{|\gamma| < T} \int_0^{\beta} x^{\sigma - 1} \log x \ d \sigma \\
& = & \int_0^1 \sum_{ \beta> \sigma, |\gamma| < T } x^{\sigma-1} \log x \ d \sigma
\end{eqnarray*}
can be reformulated as follows:
\begin{equation} \label{summypoos}
\sum_{|\gamma| < T} x^{\beta  -1} = 2 x^{-1} N(T) + 2 \int_0^{1} N(\sigma,T) x^{\sigma-1} \log x \ d \sigma,
\end{equation}
where $N(T)$ and $N(\sigma, T)$ are as defined in Chapter 2.

We can estimate the above sum, and thus $S$, with the assistance of some explicit bounds. Firstly note, that by Corollary 1 of Trudgian \cite{trudgianargument} we have that
\begin{equation}\label{trudgiancount}
N(T) < \frac{T \log T}{2 \pi}
\end{equation}
for all $T>15$. Explicit estimates for $N(\sigma,T)$ are rare, though have come to light recently through the likes of Kadiri \cite{kadiri} and Ramar\'{e} \cite{ramare}, who have produced zero-density estimates of rather different shape to each other. Ramar\'{e}'s estimate, which is an explicit and asymptotically better version of Ingham's \cite{ingham} original density estimate, is required for the problem of primes between cubes. We give the result here, which is a corollary of Theorem 1.1 of \cite{ramare}.

\begin{lemma} \label{ramaredensity}
Let $T \geq 2000$ and $\sigma \geq 0.52$. Then
$$N(\sigma,T) \leq 9.7 (3T)^{8(1-\sigma)/3} \log^{5-2\sigma} T + 103 \log^2 T.$$
\end{lemma}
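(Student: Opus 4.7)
The plan is to deduce the stated bound as a direct specialization of Theorem 1.1 in Ramaré's paper, which we take as a black box. That theorem provides a fully explicit zero-density estimate for $\zeta(s)$ whose main term carries Ingham's exponent $8(1-\sigma)/3$, together with explicit (but $\sigma$-dependent) multiplicative constants and a secondary lower-order term. The corollary is obtained by restricting the range of $\sigma$ to $[0.52,1]$ and the range of $T$ to $[2000,\infty)$, and then replacing the $\sigma$-dependent coefficients by uniform constants valid on this compact range.

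Concretely, I would first verify that the hypothesis $T\geq 2000$ lies above whatever threshold Ramaré's theorem requires, so that the estimate applies without modification. Next, to produce a leading factor of the form $(3T)^{8(1-\sigma)/3}$ rather than $T^{8(1-\sigma)/3}$, I would write
$$T^{8(1-\sigma)/3} \;=\; 3^{-8(1-\sigma)/3}\,(3T)^{8(1-\sigma)/3},$$
and absorb the factor $3^{-8(1-\sigma)/3}$ into Ramaré's leading constant. The task is then to check that, after this absorption, the resulting coefficient is bounded above by $9.7$ uniformly for $\sigma\in[0.52,1]$, and that Ramaré's exponent on $\log T$ in the main term is at most $5-2\sigma$ on the same range. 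For the secondary term, I would analogously verify that Ramaré's lower-order contribution is dominated by $103\log^2 T$ throughout this range.

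All of this amounts to routine numerical bookkeeping: each of the relevant functions of $\sigma$ is continuous on the compact interval $[0.52,1]$, so its maximum can be bounded either by exploiting monotonicity in $\sigma$ or by a short numerical check at a finite mesh of points. The only genuinely delicate step is ensuring that the $3^{8(1-\sigma)/3}$ factor, which inflates Ramaré's constant most strongly at the left endpoint $\sigma=0.52$ (where $3^{8\cdot 0.48/3}\approx 3.5$), still leaves the combined constant comfortably under $9.7$. If this tight spot checks out, the remainder of the range $\sigma\in[0.52,1]$ is easier because $3^{-8(1-\sigma)/3}\to 1$ as $\sigma\to 1$. The main obstacle is simply this careful arithmetic comparison, not any new analytic input — the real work has already been done in Ramaré's proof of Theorem 1.1.
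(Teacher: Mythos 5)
The paper gives no derivation: the lemma is introduced as ``a corollary of Theorem 1.1 of Ramar\'{e}'' and simply transcribed, so your high-level plan of specializing Ramar\'{e}'s explicit zero-density theorem and matching the hypotheses $T\geq 2000$, $\sigma\geq 0.52$ is exactly what the author does.

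Your account of the intermediate bookkeeping, however, contains a sign confusion that would derail you if you tried to carry it out. You rewrite $T^{8(1-\sigma)/3} = 3^{-8(1-\sigma)/3}(3T)^{8(1-\sigma)/3}$ and propose to absorb the factor $3^{-8(1-\sigma)/3}$ into the leading coefficient; since that factor is $\leq 1$ for $\sigma \leq 1$, this can only \emph{shrink} the coefficient. Yet you then warn that ``the $3^{8(1-\sigma)/3}$ factor $\ldots$ inflates Ramar\'{e}'s constant most strongly at the left endpoint $\sigma=0.52$.'' You cannot have it both ways: replacing $T^{8(1-\sigma)/3}$ by $(3T)^{8(1-\sigma)/3}$ only \emph{loosens} the inequality, so if Ramar\'{e}'s leading constant is already at most $9.7$ the replacement is free, and the ``tight spot'' you flag does not exist. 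In fact the factor of $3$ is almost certainly not something to introduce by hand: Ramar\'{e}'s Theorem 1.1 is stated for Dirichlet $L$-series with the conductor entering its leading term, so specializing to the conductor-one case (which yields $\zeta$) already produces $(3T)^{8(1-\sigma)/3}\log^{5-2\sigma}T$ together with the constants $9.7$ and $103$ essentially verbatim. The correct verification, if one wants one, is simply to check that the hypotheses line up and that the conductor-one zero-count is $N(\sigma,T)$; there is no mesh of $\sigma$-values to sweep and no coefficient to uniformize.
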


There are some comments to make here. Firstly, it should be noted that when $\sigma > 5/8$, the zero-density estimate in Lemma \ref{ramaredensity} is stronger than that of Kadiri's for all sufficiently large $T$. For the problem of primes between cubes, this is precisely the range of $\sigma$ we are most concerned with, and $T$ will certainly be taken large enough so that our choice of zero-density estimate is indeed the best one possible.

Also, consider a more general zero-density estimate of the form
$$N(\sigma,T) \leq B T^{K(1-\sigma)} \log^L T.$$
Note that we could apply some trivial bounds to Ramar\'{e}'s estimate so that it takes such a form. The most important player here is the constant $K$, which (see Theorem 1 of Ingham \cite{ingham}) implies that one can take $\theta = 1-K^{-1}$, so long as we have the zero-free region (\ref{vinogradovkorobov}) or, rather, anything of greater order than the so-called Littlewood zero-free region (see Theorem 5.17 of Titchmarsh \cite[Ch.\ 5]{titchmarsh1986theory})
$$\frac{c \log \log |\gamma|}{\log | \gamma |} \leq \beta \leq 1-  \frac{c \log \log |\gamma|}{\log | \gamma |}.$$

For the proof of Theorem \ref{primecube}, it would be useful to insert a completely general zero-density estimate and zero-free region (with unspecified constants). This would allow us to obtain a function which takes in the important values and returns a result for the primes-between-cubes problem. Really, this is how most mathematics is surely done, for it allows one to see immediate changes without repeating the labours of other mathematicians. However, keeping everything general disables a lot of the freedom we require when working explicitly. It is very easy to say something like
$$\log x + \log^2 x < 2 \log^2 x$$
for all $x\geq 3$ but, on the other hand, finding a suitable range so that 
$$\log^a x + \log^b x < c \log^b x$$
is not as straightforward.

As such, for the zero-density theorem, we will only leave one constant unspecified. We will conduct our working with $A$ in place of the constant $9.7$ in Lemma \ref{ramaredensity}. The reason for this is simple: if one were to sharpen up the numbers elsewhere in the estimate, it is always straightforward enough to express this as a change in the number-out-the-front.

The following zero-free region, an explicit form of the Vinogradov--Korobov region as derived by Ford \cite{ford}, will also be required.

\begin{lemma} \label{fordregion}
Let $T \geq 3$. Then there are no zeroes of $\zeta(s)$ in the region given by $\sigma \geq 1 - \nu(T)$ where
$$\nu(T) = \frac{1}{57.54 \log^{2/3} T (\log \log T)^{1/3}}.$$
\end{lemma}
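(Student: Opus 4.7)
The plan is to follow Ford's explicit derivation of the Vinogradov--Korobov zero-free region; since the lemma is essentially a quotation from \cite{ford}, my proof proposal is really a sketch of that proof strategy. The argument has two largely independent ingredients. The first, and by far the deeper one, is an explicit upper bound for $|\zeta(\sigma+it)|$ in a thin strip to the left of $\sigma=1$, of the rough shape $|\zeta(\sigma+it)| \ll \exp(C (1-\sigma)^{3/2} \log t)$ for $\sigma$ sufficiently close to $1$. This is obtained by writing $\zeta(s)$ (via Euler--Maclaurin or a smoothed approximate functional equation) as a short sum $\sum_{n\le N} n^{-s}$ plus manageable tails, dyadically decomposing, and then applying an explicit form of Vinogradov's mean value theorem to control $\sum_{N<n\le 2N} n^{-it}$. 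The cube--root of $\log t$ and the $(\log\log t)^{1/3}$ factor arise from optimising the length $N$ and the order $k$ of Vinogradov's $k$-th moment against the depth $\sigma-1$.

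The second ingredient is the classical de la Vall\'ee Poussin positivity trick. One starts from
$$3 + 4\cos\theta + \cos 2\theta \ge 0,$$
applies it through the Dirichlet series $-\zeta'(s)/\zeta(s) = \sum_n \Lambda(n) n^{-s}$ to obtain
$$-3\,\frac{\zeta'}{\zeta}(\sigma) - 4\,\mathrm{Re}\,\frac{\zeta'}{\zeta}(\sigma+it) - \mathrm{Re}\,\frac{\zeta'}{\zeta}(\sigma+2it) \ge 0$$
for $\sigma>1$, and then, assuming a hypothetical zero $\rho=\beta+i\gamma$ close to the line $\sigma=1$, uses the partial-fraction expansion of $\zeta'/\zeta$ (in the spirit of our Lemma \ref{zetaaszeroes}) together with the Borel--Carath\'eodory bound on $\mathrm{Re}(\zeta'/\zeta)$ derived from the first ingredient to turn the displayed inequality into a lower bound for $1-\beta$ of the required shape. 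The term $-3\zeta'/\zeta(\sigma)$ is controlled using the pole of $\zeta$ at $s=1$; the term at $\sigma+it$ is forced down by the isolated zero at $\rho$; and the term at $\sigma+2it$ is bounded above using the first ingredient.

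The hard part, as is standard in this circle of ideas, is the explicit bound on $|\zeta(\sigma+it)|$: the specific constant $57.54$ is the outcome of careful bookkeeping through Vinogradov's mean value theorem, the smoothing used to reduce to it, and the optimisation of the auxiliary parameters $\sigma$, $N$, $k$, and the radius in the Borel--Carath\'eodory step. Reproducing all of these explicit estimates would essentially amount to re-deriving Ford's paper, so for the purposes of this thesis the lemma is imported verbatim from \cite{ford} and no attempt is made to reprove or sharpen it; only the explicit form of $\nu(T)$ will be used in what follows.
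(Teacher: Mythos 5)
Your proposal matches what the thesis actually does: Lemma \ref{fordregion} is stated without proof and is a direct quotation of Ford's explicit Vinogradov--Korobov zero-free region, so ``importing it verbatim from \cite{ford}'' is exactly the paper's treatment. Your sketch of the underlying argument (explicit $\zeta$-bound via Vinogradov's mean value theorem combined with the de la Vall\'ee Poussin positivity trick and a Borel--Carath\'eodory step) is an accurate description of Ford's method, but none of that sketch is needed or given in the thesis itself.
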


For the same reasons as with the zero-density estimate, we will leave the scaling constant general, and so we work with $c$ in place of the $57.54$ in the above lemma. 

We split the integral in (\ref{summypoos}) into two parts, one over the interval $0 \leq \sigma \leq 5/8$, where $N(\sigma,T)$ may as well be bounded by $N(T)$, and another over $5/8 \leq \sigma \leq 1-\nu(T)$. By applying the relevant estimates in the above two lemmas, we get
\begin{eqnarray} \label{crackers}
\sum_{|\gamma| < T} x^{\beta-1} & < & 2 x^{-1} N(T) + 2 x^{-1} N(T) \log x \int_0^{5/8} x^{\sigma} dx \nonumber  \\
& + & 2 A x^{-1} (3T)^{8/3} \log x \log^5 T \int_{5/8}^{1-\nu(T)} \bigg( \frac{x}{(3T)^{8/3} \log^2 T} \bigg)^{\sigma} \ d \sigma  \nonumber \\
& + & 103 x^{-1} \log x \log^2 T \int_{5/8}^{1-\nu(T)} x^{\sigma} \ d \sigma.
\end{eqnarray}

The working out is routine, yet tedious. We give the qualitative details to the extent that the reader can follow the process. We introduce the parameter $k \in (\frac{2}{3},1)$, which will play a part in the relationship between $T$ and $x$. The reasons for the range of values of $k$ will become clear soon. We let $T=T(k,x)$ be the solution to the equation
\begin{equation} \label{gumbie}
\frac{x}{(3T)^{8/3} \log^2 T} = \exp( \log^k x).
\end{equation}
We then have that
$$x =  \exp( \log^k x) (3T)^{8/3} \log^2 T > T^{8/3}.$$
Upon performing the integration in (\ref{crackers}), we directly apply the equation (\ref{gumbie}), along with the bound for $N(T)$ and the fact that $\log T < (3/8) \log x$, to get
\begin{eqnarray} \label{dong}
\sum_{|\gamma| < T} x^{\beta-1} & < & \frac{ e^{-\frac{3}{8} \log^k x} \log^{1/4} x}{3^{3/4} 8^{1/4} \pi}+ \frac{27A}{256} \log^{4-k} x ( e^{-\nu(T) \log^k x}- e^{-(3/8) \log^k x}) \nonumber \\ 
& + & \frac{927 A}{32} \log^2 x (e^{- \nu(T) \log x} - x^{-3/8}).
\end{eqnarray}
There is some cancellation in the above inequality. First, we need to estimate one of the exponential terms involving $\nu(T)$. We have that
\begin{eqnarray*}
e^{-v(T) \log x} & = & \exp\bigg( -  \frac{\log x}{c \log^{2/3} T (\log \log T)^{1/3}} \bigg) \\
& < &  \exp\bigg( -  \frac{4}{3^{2/3} c} \bigg( \frac{\log x}{  \log \log x} \bigg)^{1/3} \bigg).
\end{eqnarray*}
Now, upon expansion of (\ref{dong}) and using the above we can notice that
$$- \frac{27 A}{256} (\log x)^{4-k} e^{- (3/8) \log^k x}+ \frac{927 A}{32} \log^2 x (e^{- \nu(T) \log x} - x^{-3/8}) < 0.$$
This is clear if one looks at the dominant terms, but one can verify this by dividing through by $A$, stipulating that $c \leq 57.54$, $k \in (\frac{2}{3},1)$, and taking $x > e^{60}$. It follows that
\begin{eqnarray} 
\sum_{|\gamma| < T} x^{\beta-1} & < & \frac{ e^{-\frac{3}{8} \log^k x} \log^{1/4} x}{3^{3/4} 8^{1/4} \pi}+ \frac{27A}{256} (\log x)^{4-k} e^{-\nu(T) \log^k x}.
\end{eqnarray}
The remaining exponential term involving $\nu(T)$ is dealt with as before to get
\begin{eqnarray*} 
S \leq h \sum_{|\gamma| < T} x^{\beta-1} & < & \frac{ h e^{-\frac{3}{8} \log^k x} \log^{1/4} x}{3^{3/4} 8^{1/4} \pi}+ \frac{27Ah}{256} (\log x)^{4-k} \exp\bigg(- \frac{4}{3^{2/3}c}  \frac{\log^{k-2/3} x}{(\log \log x)^{1/3}} \bigg).
\end{eqnarray*}

Therefore, we may write $(\ref{big})$ as
$$\theta_{x,h} > h - f(x,h,k,A,c) - g(x,h,k)-E(x,h,k)$$
where
\begin{eqnarray*}
f(x,h,k,A,c) & = & \frac{27 A h}{256} (\log x)^{4-k} \exp\bigg( - \frac{4}{3^{2/3} c} \frac{\log^{k-2/3} x}{(\log \log x)^{1/3}}\bigg),\\ \\
g(x,h,k) & = & 12 \bigg(\frac{3}{8} \bigg)^{3/4} \frac{(x+h) \log^{11/4} (x+h)}{x^{3/8}} \exp\bigg(\frac{3}{8} \log^k x\bigg),\\ \\
 E(x,h,k) & = & - \frac{ h (\log x)^{1/4} \exp(-\frac{3}{8} \log^k x)}{6^{3/4} \pi}-(1+7.5 \cdot 10^{-7})(x+h)^{1/2} \\\
 &-&3(x+h)^{1/3}+0.9999 x^{1/2}.
\end{eqnarray*}

First, we look to bound the error. Noting that $x > e^{60}$ and $h = 3x^{2/3}$, we use the fact that $k=2/3$ will give us the worst possible error to get
$$\frac{E(x,3 x^{2/3},2/3)}{3x^{2/3}} < 10^{-3}.$$
Thus, one can show that positivity holds if the following two inequalities are simultaneously satisfied:
\begin{enumerate}
\item $f(x,h,k,A,c) < \frac{1}{2} (1-10^{-3}) h$,
\item $g(x,h,k) < \frac{1}{2} (1-10^{-3}) h.$
\end{enumerate}
This splitting simplifies our working greatly whilst perturbing the solution negligibly. To be convinced of this, one could consider the right hand side of each of the above inequalities as being equal to $h$, in some better-than-possible scenario. It turns out that the improvements would hardly be noticeable.

Now, in the first inequality, we take the logarithm of both sides and set $x = e^y$ to get
\begin{equation}\label{1}
\log\bigg(\frac{27 A}{256}\bigg) + (4-k) \log y - \frac{4}{3^{2/3} c} \frac{y^{k-2/3}}{\log^{1/3} y} < \log \bigg(\frac{1}{2} (1-10^{-3})\bigg).
\end{equation}
This is easy to solve using \textsc{Mathematica}, given knowledge of $A$, $k$ and $c$. There are some notes to make here first. We can see that $A$, the constant in front of Ramar\'{e}'s zero-density estimate has little contribution, for being in the argument of the logarithm. On the other hand, $c$ plays a much larger part from where it is positioned. We can also see the reason for $k > 2/3$: this guarantees a solution to the above inequality.

We deal with the second inequality in the same way, but first we notice that
$$\frac{g(x,h,k)}{h} < \frac{2 \log^{11/4} x}{x^{1/24}} \exp\bigg( \frac{3}{8} \log^k x\bigg).$$
This is obtained using the main result of Ramar\'{e} and Saouter \cite{ramaresaouter}, namely that
$$x+h < \frac{x}{1-\Delta^{-1}}$$
where $\Delta = 28 314 000$ as given in their paper. Thus, using the same approach as before we get
\begin{equation} \label{2}
\frac{11}{4} \log y + \frac{3}{8} y^k-\frac{1}{24} y < \log\bigg( \frac{1}{4} (1-10^{-3})\bigg).
\end{equation}

We notice here our reason for having $k<1$. One can also see the reason for leaving $k$ free to vary in $(\frac{2}{3},1)$. There should be an optimal value of $k$, where the solution range of the above two inequalities are equal and their intersection is minimised. 

No rigorous analysis needs to be conducted; we set $A=9.7$, $c=57.54$ and use the \texttt{Manipulate} function of \textsc{Mathematica} to ``hunt'' for a good value of $k$. It turns out that upon choosing $k=0.9359$, we have that both inequalities are satisfied for $y>8 \times 10^{14}$, or $x^{1/3} > \exp(\exp(33.217))$, which proves our main result. 

\section{Improvements and higher powers}

In this short section, we will discuss how improvements to the parameters $A$ and $c$ will affect our result for primes between cubes. We will then consider the problem of primes between higher integer powers.

Let us consider first improving the zero-density estimate given by Ramar\'{e}. Say, for the sake of discussion, one could obtain a value of $A=10^{-4}$. Then we would obtain our result instead with $n \geq \exp(\exp(32.7))$, an improvement which would probably not be worth the efforts required to obtain such a value of $A$. On the other hand, reducing the exponent of $T$ in Ramar\'{e}'s estimate would correspond to incredibly small values of $A$, seeing as we take $T$ to be arbitrarily large. Therefore, it would be useful for a zero-density estimate of smaller order to be made explicit for use in this problem. One can see Titchmarsh \cite[Ch.\ 9]{titchmarsh1986theory} for some discussion of these.

Changes in the constant $c$ are more effective, though seemingly much more difficult to obtain. A value of $c=40$ would yield only $n \geq \exp(\exp(31.88))$, and $c=20$ would give $n \geq \exp(\exp(29.6))$. The removal of the $(\log \log T)^{1/3}$ would give a similar result to this. 

There are other parameters where one might wish to direct future efforts. In Ramar\'{e}'s zero density estimate, one might consider the power $5-2\sigma$ of the logarithm to be $L-2\sigma$. The main difference in our working would be $(L-1-k)$ in place of $(4-k)$ in the reduced form of our second inequality. The following table summarises the improvements which would follow, namely the existence of a prime between $n^3$ and $(n+1)^3$ for all $n \geq n_0$. 
\begin{center}
  \begin{tabular}{ | c | c | }
    \hline
    $L$  & $\log \log n_0$ \\ \hline \hline
    5 & 33.217 \\ \hline
    4 & 31.8 \\ \hline
    3 & 29.8 \\ \hline
    2 & 22.19 \\ \hline
    
  \end{tabular}
\end{center}

Turning now to the error term of Theorem \ref{explicitformula} one could also consider a smaller constant in place of $2$. This constant, however, would appear in the logarithm of the right hand side of (\ref{2}), and thus make little difference. On this note, Wolke \cite{wolke} has derived the Riemann--von Mangoldt explicit formula with an error term which is
$$O\Big(\frac{x \log x}{T \log (x/T)}\Big) $$
and thus $O(x/T)$ for the choice of $T(x)$ used for our problem. One may propose all sorts of ``good'' explicit constants for the above error term and try them via the methods of this paper, but there will be no major improvements.

Thus, really, one expects a major result, or perhaps the collaboration of minor ones, to make significant progress on this problem.

In lieu of a complete result on the problem of primes between cubes, we consider instead primes between $m$th powers, where $m$ is some positive integer. Appropriately, we choose $h = m x^{1-1/m}$, and we are able to prove the following result.

\begin{theorem} \label{mpowers}
Let $m \geq 4.971 \times 10^9$. Then there is a prime between $n^m$ and $(n+1)^m$ for all $n \geq 1$.
\end{theorem}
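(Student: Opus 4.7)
The plan is to adapt the methodology of Section 3.1 to general $m$, but since we now require primality for \emph{every} $n\ge 1$, the argument must split by the size of $n$. Setting $h=mn^{m-1}=mx^{1-1/m}$ with $x=n^{m}$, the mean value theorem gives $(n+1)^{m}-n^{m}>h$, so it suffices to show $\theta(x+h)-\theta(x)>0$. The case $n=1$ is immediate since $2\in(1,2^{m})$, so we may assume $n\ge 2$; then $x\ge 2^{m}$ vastly exceeds $e^{60}$ and Theorem~\ref{explicitformula} is available.

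For the range in which $h/x=m/n$ is not too small, I would invoke an explicit PNT bound of Vinogradov--Korobov strength $|\theta(x)-x|\le Cx\exp(-\sqrt{\log x/R})$ due to Dusart. The decomposition $\theta(x+h)-\theta(x)\ge h-\varepsilon(x+h)(x+h)-\varepsilon(x)x$ is positive whenever $m/n>2\varepsilon(x)/(1-\varepsilon(x))$, which after substituting $x=n^{m}$ and solving reduces to an inequality of the form $\log n\le m/R+O(\log m)$. Thus the PNT-based argument handles all $n$ up to a threshold $n_{1}\approx\exp(m/R)$, which is astronomically large.

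For $n>n_{1}$ the interval is short enough that the PNT bound no longer suffices, and one invokes Theorem~\ref{explicitformula}, Lemma~\ref{psitheta}, Ramar\'{e}'s density estimate (Lemma~\ref{ramaredensity}), and Ford's zero-free region (Lemma~\ref{fordregion}) exactly as in Section 3.1. This produces two inequalities analogous to (\ref{1}) and (\ref{2}). The analog of (\ref{2}) is now trivially satisfied because the factor $x^{3/8}/h=n^{3m/8-1}/m$ dwarfs every other term as soon as $k<1$. The analog of (\ref{1}), being independent of $h$, forces $y=m\log n$ to exceed some threshold $y_{0}=y_{0}(A,c,k)$ determined by the parameters of Lemmas~\ref{ramaredensity} and~\ref{fordregion}; this gives positivity for all $n\ge n_{2}:=\exp(y_{0}/m)$.

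The coverage condition $n_{2}\le n_{1}$ translates to $m^{2}\ge Ry_{0}$, and the numerical value $4.971\times 10^{9}$ arises by optimising the free parameter $k\in(2/3,1)$ in the explicit-formula analysis, exactly as the \texttt{Manipulate}-driven hunt produced $k=0.9359$ in Section 3.1, together with a careful choice of Dusart bound in the PNT regime. The main obstacle is this matching step: both endpoints are delicate, so that even modest changes to the constants $A=9.7$ in Ramar\'{e}'s density estimate or $c=57.54$ in Ford's zero-free region would shift the admissible $m$ by orders of magnitude, and securing the stated threshold demands the kind of careful numerical balancing that was already the sharp part of the cubes argument.
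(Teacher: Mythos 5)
Your two–regime structure (a PNT-type tool for small $n$, the explicit-formula analysis for large $n$, then a coverage condition on $m$) is the right shape, and your large-$n$ regime agrees with the paper: inequality (\ref{1}) alone with $k$ pushed close to $1$ produces $y_{0}\approx 1000\exp(19.807)\approx 4\times 10^{11}$, giving $n_{2}=\exp(y_{0}/m)$. But the small-$n$ regime you propose is not the one in the paper, and the mismatch matters numerically.

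The paper does \emph{not} estimate $\theta(x+h)-\theta(x)$ via an error bound of the form $|\theta(x)-x|\le Cx\exp(-\sqrt{\log x/R})$ (which, as a side remark, is de la Vall\'{e}e Poussin strength, not Vinogradov--Korobov). Instead it quotes Trudgian's Corollary 2 of \cite{trudgianpomerance}: a prime exists in $\bigl[x,x\bigl(1+\tfrac{1}{111\log^{2}x}\bigr)\bigr]$ for $x\ge 2\,898\,242$. Setting $x=n^{m}$ and asking that this interval fit inside $[n^{m},n^{m}+mn^{m-1}]$ gives (\ref{upper}), namely $n/\log^{2}n<111m^{3}$ — a merely \emph{polynomial} ceiling on $n$. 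Matching that with (\ref{lower}) and solving $\exp(y_{0}/m)=111my_{0}^{2}$ is exactly where $4.971\times 10^{9}$ comes from.

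Your coverage condition $m^{2}\ge Ry_{0}$ therefore does \emph{not} produce $4.971\times 10^{9}$. With $R=6.315$ and $y_{0}\approx 4\times 10^{11}$ it gives $m\ge\sqrt{Ry_{0}}\approx 1.6\times 10^{6}$, roughly three orders of magnitude below the paper's threshold, precisely because the Mossinghoff--Trudgian bound (Lemma~\ref{explicitPNT}) controls short intervals all the way out to $n\approx\exp(m/R)$ rather than only to $n\approx 111m^{3}\log^{2}n$. So the sentence ``the numerical value $4.971\times 10^{9}$ arises by optimising the free parameter $k$'' is not supported by your own argument. What you have actually sketched is an \emph{improvement} on Theorem~\ref{mpowers}, not a reproduction of it; the remaining work is only to confirm the subsidiary constraints (that Lemma~\ref{explicitPNT} applies at $x=n^{m}\ge 2^{m}$, that (\ref{new2}) indeed holds for all $y>y_{0}$ at your chosen $k$, and that the explicit constant $\sqrt{8/17\pi}\,X^{1/2}$ does not disturb the $n_{1}\approx\exp(m/R)$ estimate), all of which are routine at these scales.
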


The result seems absurd on a first glance as the value of $m$ is quite large. We shall leave it to others to attempt to bring the value down. We prove the above theorem as follows: for our choice of $h$, it follows that (\ref{2}) becomes
\begin{equation} \label{new2}
\frac{11}{4} \log y - \Big( \frac{3}{8} - \frac{1}{m} \Big) y+\frac{3}{8} y^k < \log\Big( \frac{m}{12} (1-10^{-3})\Big)
\end{equation}
whereas (\ref{1}) remains the same. As before, we can, for some given $m$, choose $k$ and find $n_0$ such that there is a prime between $n^m$ and $(n+1)^m$ for all $n \geq n_0$ by solving both inequalities. Some results are given in the following table.
\begin{center}
  \begin{tabular}{ | c | c | c | }
    \hline
    $m$  & $k$ & $\log \log n_0$ \\ \hline \hline
    4 & 0.9635 & 29.240 \\ \hline
    5 & 0.9741 & 27.820 \\ \hline
    6 & 0.9796 & 27.230 \\ \hline
    7 & 0.983 & 26.427 \\ \hline
    1000 & 0.9998 & 19.807 \\ \hline
  \end{tabular}
\end{center}

One can see that this method has its limitations, even in the case of higher powers. In any case, we have that there is a prime in $(n^{1000}, (n+1)^{1000})$ for all $n  \geq \exp(\exp(19.807))$. It follows that, for $m \geq 1000$, there is a prime between $n^m$ and $(n+1)^m$ for all 
\begin{equation} \label{lower}
n \geq \exp\bigg( \frac{1000 \exp(19.807)}{m}\bigg).
\end{equation}
We could choose $m = 1000 \exp(19.807)$ which is approximately $4 \times 10^{11}$ to get primes between $n^m$ and $(n+1)^m$ for all $n \geq e$. Bertrand's postulate then improves this to all $n \geq 1$.

However, we can use Corollary 2 of Trudgian \cite{trudgianpomerance} to improve on this value of $m$. This states that for all $x \geq 2 \ 898 \ 242$ there exists a prime in the interval 
$$\bigg[ x,x\bigg(1+\frac{1}{111\log^2 x}\bigg) \bigg].$$
If we set $x = n^m$, we might ask when the above interval falls into $[n^m,n^m+m n^{m-1}]$. One can rearrange the inequality 
$$n^m \bigg(1+\frac{1}{111\log^2 (n^m)}\bigg) < n^m+m n^{m-1}$$
to get
\begin{equation} \label{upper}
\frac{n}{\log^2 n} < 111 m^3.
\end{equation}
We wish to choose the lowest value of $m$ for which the solution sets of (\ref{lower}) and (\ref{upper}) first coincide. It is not to hard to see that this equates to solving simultaneously the equations
\begin{equation*}
n = \exp\bigg( \frac{1000 \exp(19.807)}{m}\bigg)
\end{equation*}
and
\begin{equation*} 
\frac{n}{\log^2 n} = 111 m^3.
\end{equation*}
We do this by substituting the first equation directly into the second to get
$$\exp\bigg( \frac{1000 \exp(19.807)}{m}\bigg) = 111 (1000 \exp(19.807))^2 m$$
which can easily be solved with \textsc{Mathematica} to prove Theorem \ref{mpowers}. 

A gambit was suggested to the author, that he might consider the larger interval $[n^m, (n+1)^m]$ in favour of $[n^m, n^m+m n^{m-1}]$. The improvements, however, are negligible. Using the well-known bound
$$\binom{n}{k} \leq \frac{n^k}{k!},$$
we have from the binomial theorem that
\begin{eqnarray*}
(n+1)^m - (n^m +m n^{m-1})  & = & \sum_{k=2} \binom{m}{k} n^{m-k} \\
& \leq & m n^{m-1} \sum_{k=2}^{m} \frac{1}{k!} \bigg( \frac{m}{n} \bigg)^{k-1}  \\
& < & m n^{m-1} \bigg( \frac{m}{n} \bigg) \sum_{k=2}^{m} \frac{1}{k!}.
\end{eqnarray*}
Unfortunately, from (\ref{lower}) we have that $m/n$ is extremely small for the values of $m$ which improve on Theorem \ref{mpowers}.

\section{A result on the Riemann hypothesis}

We have seen that without the Riemann hypothesis, explicit results are obtainable using similar-natured zero-free regions and zero-density estimates. If we assume this hypothesis, however, we replace such estimates with the simple identity $\beta=~1/2$, and subsequently our working and results become far neater.

On the assumption of the Riemann hypothesis, von Koch \cite{vonkoch} proved that there exists a constant $k$ such that the interval $(x-k \sqrt{x} \log^2 x, x)$ contains a prime for all $x \geq x_0$, where $x_0$ is sufficiently large. This is often written as
$$p_{n+1} - p_n = O(p_n^{1/2} \log^2 p_n),$$
where $p_n$ denotes the $n$th prime. Schoenfeld \cite{schoenfeld} made this result  explicit, showing that one can take $k = 1/(4 \pi)$ and $x_0 = 599$. 

Cram\'{e}r \cite{cramer} sharpened the result of von Koch by proving the following theorem.

\begin{theorem} \label{cramer}
Suppose the Riemann hypothesis is true. Then it is possible to find a positive constant $c$ such that
\begin{equation} \label{cramer1}
\pi(x+c \sqrt{x} \log x) - \pi(x) > \sqrt{x}
\end{equation}
for $x \geq 2$. Thus if $p_n$ denotes the $n$th prime, we have
\begin{equation} \label{cramer2}
p_{n+1} - p_n = O(\sqrt{p_n} \log p_n).
\end{equation}
\end{theorem}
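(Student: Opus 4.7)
The plan is to deduce Theorem~\ref{cramer} from the truncated Riemann--von Mangoldt explicit formula (Theorem~\ref{explicitformula}) together with the Riemann hypothesis. First, applying the explicit formula at the arguments $x$ and $x+h$ with $h=c\sqrt{x}\log x$, and subtracting, gives
\begin{equation*}
\psi(x+h)-\psi(x)=h-\sum_{|\gamma|<T}\frac{(x+h)^{\rho}-x^{\rho}}{\rho}+O^{*}\!\Bigl(\tfrac{4(x+h)\log^{2}(x+h)}{T}\Bigr),
\end{equation*}
where, under RH, every zero in the sum satisfies $\rho=1/2+i\gamma$, so in particular $|x^\rho|=\sqrt{x}$. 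The strategy is then to bound the zero-sum sharply, make the truncation term negligible by choosing $T$ large, and convert the resulting $\psi$-estimate into an estimate for $\pi$.

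For the zero-sum I would combine two bounds on $(x+h)^\rho - x^\rho$. The integral representation $(x+h)^\rho - x^\rho = \int_x^{x+h}\rho\,t^{\rho-1}\,dt$ yields $|(x+h)^\rho - x^\rho|\leq h|\rho|x^{-1/2}$, while the trivial bound $|(x+h)^\rho - x^\rho|\leq 2\sqrt{x+h}$ is free. Splitting the sum at the crossover $|\gamma|=Y\asymp x/h$, using the Riemann--von Mangoldt asymptotic $N(T)\sim T\log T/(2\pi)$ for the low-zero piece and partial summation $\sum_{Y<|\gamma|<T}|\gamma|^{-1}\sim\bigl(\log^{2}T-\log^{2}Y\bigr)/(2\pi)$ for the high-zero tail, produces an explicit bound on the zero-sum. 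Taking $T\asymp x$ makes the remainder $O^*(x\log^2 x/T)$ negligible.

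Once a bound of the form $|\psi(x+h)-\psi(x)-h|\le(1-\delta)h$ has been secured for some $\delta>0$, I would convert from $\psi$ to $\pi$ via Lemma~\ref{psitheta} (which costs only $O(\sqrt{x})$) followed by an Abel summation, yielding $\pi(x+h)-\pi(x)=(\psi(x+h)-\psi(x))/\log x+O(\sqrt{x}/\log x)$. Taking $c$ large enough delivers $\pi(x+h)-\pi(x)>\sqrt{x}$ for all $x\geq x_{0}$, and the finite range $2\le x\le x_{0}$ is absorbed simply by enlarging $c$, since doing so only widens the interval $(x,x+c\sqrt{x}\log x]$. The corollary \eqref{cramer2} then follows at once: apply \eqref{cramer1} with $x=p_{n}$ and observe that $(p_{n},p_{n}+c\sqrt{p_{n}}\log p_{n}]$ must contain a prime, forcing $p_{n+1}-p_{n}\le c\sqrt{p_{n}}\log p_{n}$.

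The principal obstacle is the sharpness of the zero-sum bound. The naïve split just described leaves an error of order $\sqrt{x}\log^{2}x$, which is one factor of $\log x$ too large compared with $h=c\sqrt{x}\log x$ and so would not yet imply the theorem. Recovering this missing factor requires either a smoothing step — for example, first estimating $\psi_{1}(x)=\int_{0}^{x}\psi(t)\,dt$, whose explicit formula has the much sharper error $O(x)$ thanks to the convergence of $\sum_{\rho}|\rho(\rho+1)|^{-1}$, and then transferring back to $\psi(x+h)-\psi(x)$ via a double-difference argument together with the monotonicity of $\psi$ — or a more delicate exploitation of oscillation in $\gamma$ within the high-zero tail. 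This is the step I expect to demand the most care.
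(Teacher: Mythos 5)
Your self-assessment is accurate and shows good instincts, but the plan as first laid out does not reach the theorem. The direct application of Theorem~\ref{explicitformula} --- splitting the zero-sum at $|\gamma|\asymp x/h$ and applying the two bounds $|(x+h)^\rho-x^\rho|\le h|\rho|x^{-1/2}$ and $|(x+h)^\rho-x^\rho|\le 2\sqrt{x+h}$ --- cannot be made to close. With $T\asymp x$ the tail $\sum_{Y<|\gamma|<T}|\gamma|^{-1}\asymp\log^2T-\log^2Y\asymp\log^2x$ contributes $\asymp\sqrt{x}\log^2 x$, which dwarfs $h=c\sqrt{x}\log x$; shrinking $T$ only inflates the truncation term $x\log^2x/T$, and enlarging $c$ cannot help since the offending term is $c$-independent. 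You correctly flag this.

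Your proposed repair is, in substance, the paper's own method. The paper works with $\psi_1$ and its absolutely convergent zero-sum $\sum_\rho x^{\rho+1}/(\rho(\rho+1))$ (Lemma~\ref{first}), and the ``double difference plus monotonicity'' step is realised there via the triangular weight $w(n)$: Lemma~\ref{dog} gives $\sum_n\Lambda(n)w(n)=\frac{1}{h}\bigl(\psi_1(x+h)-2\psi_1(x)+\psi_1(x-h)\bigr)\le\psi(x+h)-\psi(x-h)$, and the extra $|\rho|$ in the denominator turns the divergent tail into $\sum_{|\gamma|>Y}|\gamma|^{-2}\asymp(\log Y)/Y$, recovering the lost factor of $\log x$. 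As written, your proposal defers this step rather than executing it --- and it is the substantive one --- so the argument is a correct outline rather than a complete proof. Note also that the paper itself does not supply a proof of Theorem~\ref{cramer}: it is quoted as Cramér's 1921 result. The paper's $\psi_1$-machinery reproves it with $c=3+\epsilon$ for all sufficiently large $x$ (see the closing remark of Section 3.3), and the uniform constant for $x\ge 2$ is indeed obtained as you say, by enlarging $c$ to absorb the finite initial range.
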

Goldston \cite{goldston} made the above result more precise by showing that one could take $c=5$ in (\ref{cramer1}) for all sufficiently large values of $x$. He also showed that 
$$p_{n+1} - p_n < 4 p_n^{1/2} \log p_n$$
for all sufficiently large values of $n$. It should be noted that Goldston was not trying in any way to find the optimal constants; he was providing a new proof of Cram\'{e}r's theorem. Ramar\'{e} and Saouter \cite{ramaresaouter} furthered this line of research by showing that for all $x \geq 2$ there exists a prime in the interval $(x-\frac{8}{5} \sqrt{x} \log x, x]$.

The purpose of this section is to give the following improvement on the work of Ramar\'{e} and Saouter.

\begin{theorem} \label{one}
Suppose the Riemann hypothesis is true. Then there is a prime in the interval $(x-\frac{4}{\pi} \sqrt{x} \log x,x]$ for all $x\geq 2$.
\end{theorem}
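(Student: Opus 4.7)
The plan is to apply the Riemann--von Mangoldt explicit formula from Theorem \ref{explicitformula} under the Riemann hypothesis to the difference $\psi(x)-\psi(x-h)$, with $h=\frac{4}{\pi}\sqrt{x}\log x$, and show that this is strictly positive (by more than the prime-power contribution) once $x$ exceeds some explicit threshold. Under RH every nontrivial zero has $\rho=\tfrac12+i\gamma$, so the summand on the zero side simplifies enormously: writing
$$\frac{x^\rho-(x-h)^\rho}{\rho} = \int_{x-h}^x t^{\rho-1}\,dt,$$
I obtain the two pointwise estimates $\bigl|(x^\rho-(x-h)^\rho)/\rho\bigr| \le h(x-h)^{-1/2}$ and $\bigl|(x^\rho-(x-h)^\rho)/\rho\bigr| \le 2\sqrt{x}/|\gamma|$, with the second following from a standard integration by parts. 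These bounds are the right size precisely at $|\gamma|=T_0:=2\sqrt{x}/h$, which is the natural splitting point.

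Subtracting Theorem \ref{explicitformula} at $x$ and $x-h$ yields
$$\psi(x)-\psi(x-h) \;\ge\; h \;-\; \Bigl|\sum_{|\gamma|<T} \tfrac{x^\rho-(x-h)^\rho}{\rho}\Bigr| \;-\; \frac{4x\log^2 x}{T},$$
up to a negligible shift to make $x$ and $x-h$ half odd integers. I would then split the zero sum at $T_0$: for $|\gamma|\le T_0$, apply the bound $h(x-h)^{-1/2}$ and use Trudgian's explicit bound $N(T)\le T\log T/(2\pi)$; for $T_0<|\gamma|\le T$, apply $2\sqrt{x}/|\gamma|$ combined with an explicit form of the asymptotic $\sum_{0<\gamma\le T}1/\gamma \sim \log^2 T/(4\pi)$. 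The truncation height $T$ is then chosen just large enough to absorb the $x\log^2 x/T$ error into a lower-order term, e.g.\ $T$ a suitable power of $x$.

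The specific constant $4/\pi$ emerges from the factor $1/(4\pi)$ in the asymptotic for $\sum 1/\gamma$, doubled for the $\pm\gamma$ pairs and multiplied by $2\sqrt{x}$, giving a dominant contribution of the form $\tfrac{\sqrt{x}}{\pi}\bigl(\log^2 T-\log^2 T_0\bigr)$ from the tail; combined with the (smaller) head contribution and substituting $T_0 = 2\sqrt{x}/h$, this matches $h = \tfrac{4}{\pi}\sqrt{x}\log x$ up to $o(1)$, so positivity follows for sufficiently large $x$. To move from a lower bound on $\psi(x)-\psi(x-h)$ to the existence of an actual prime, I would invoke Lemma \ref{psitheta} to subtract the prime-power contribution $\psi(x)-\theta(x)$, which is only $O(\sqrt{x})$ and hence absorbed comfortably into the margin, leaving $\theta(x)-\theta(x-h)>0$ and thus a prime in the interval.

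For small $x$, below the threshold where the asymptotic analysis starts biting, I would fall back on the result of Ramar\'e--Saouter already cited in the chapter, which provides primes in $(x-\tfrac{8}{5}\sqrt{x}\log x,x]$ for all $x\ge 2$; since $8/5>4/\pi$ is false, one instead uses their more refined Table 1 (or a short direct check against tables of primes) to verify the sharper constant $4/\pi$ on the finite intermediate range. The main obstacle I anticipate is the bookkeeping required to extract \emph{exactly} the constant $4/\pi$: the conceptual estimates are straightforward, but obtaining a clean final constant demands using the sharpest explicit versions of $N(T)$, of $\sum 1/\gamma$, and of the error term in Theorem \ref{explicitformula}, and carefully optimising over both $T_0$ and $T$ so that no slackness accumulates.
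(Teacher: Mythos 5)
Your plan has a fundamental gap: applying Theorem~\ref{explicitformula} to the first difference $\psi(x)-\psi(x-h)$ can only yield an interval of length $\sqrt{x}\log^2 x$, not $\sqrt{x}\log x$ — it recovers the von Koch bound, not Cram\'er's. The two requirements on $T$ are incompatible. To make the error term $4x\log^2 x/T$ smaller than $h=\frac{4}{\pi}\sqrt{x}\log x$ you must take $T\gg\sqrt{x}\log x$, hence $\log T\gtrsim\tfrac12\log x$. But then the tail of the zero sum, using your bound $2\sqrt{x}/|\gamma|$ together with $\sum_{0<\gamma\le T}1/\gamma\sim\log^2 T/(4\pi)$, contributes roughly
\begin{equation*}
2\sqrt{x}\cdot\frac{2\log^2 T}{4\pi}\;\ge\;\frac{\sqrt{x}\log^2 x}{4\pi},
\end{equation*}
which exceeds $h$ by a factor of order $\log x$. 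Your splitting point $T_0=2\sqrt{x}/h=\pi/(2\log x)$ is below the height of the first zero, so the ``head'' is empty and cannot save the estimate; the constant $4/\pi$ does not emerge from this calculation, and positivity fails for large $x$.

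The paper circumvents exactly this obstruction by working with $\psi_1(x)=\sum_{n\le x}(x-n)\Lambda(n)$ and the triangular (tent) weight $w(n)$, via the second difference $\psi_1(x+h)-2\psi_1(x)+\psi_1(x-h)$. The explicit formula for $\psi_1$ (Lemma~\ref{first}) has a \emph{bounded} error term of size $12/5$ because the zero sum carries a factor $1/(\rho(\rho+1))$ and so converges absolutely — there is no truncation cost $x\log^2 x/T$ at all. The summands in $\Sigma$ now decay like $x^{3/2}/\gamma^2$, not $x/\gamma$, which is what allows the head/tail split at height $\alpha x/h$ (optimised at $\alpha=2$) to produce the main term $\frac{2}{\pi}(\alpha/2+2/\alpha)\sqrt{x}\log(\alpha x/h)=\frac{4}{\pi}\sqrt{x}\log x\,(1+o(1))$. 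In short, the key idea you are missing is the smoothed, second-difference weighting, which is precisely Cram\'er's device for beating von Koch by one power of $\log$. (As a smaller point, your parenthetical ``since $8/5>4/\pi$ is false'' is itself false: $8/5=1.6>4/\pi\approx 1.273$, which is exactly why the Ramar\'e--Saouter interval is too wide to finish the small-$x$ range directly, and a separate computation is needed, as the paper does.)
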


We prove this theorem using a weighted version of the Riemann--von Mangoldt explicit formula and some standard explicit estimates for sums over the zeroes of the Riemann zeta-function. It should be noted that the constant $4/\pi = 1.273\ldots$ appearing in Theorem \ref{one} is not optimal. In fact, it is unclear what the limitations of the method are, though we can compare our result with Cram\'{e}r's conjecture \cite{cramerorder} that
$$p_{n+1}-p_n = O(\log^2 p_n).$$
We will show that with a bit more consideration, the constant $4/\pi$ can be improved as follows.

\begin{theorem} \label{implicit}
Suppose the Riemann hypothesis is true and let $\epsilon > 0$. Then there is a prime in the interval $(x-(1+\epsilon) \sqrt{x} \log x,x]$ provided that $x$ is sufficiently large.
\end{theorem}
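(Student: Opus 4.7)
The plan is to retrace the proof of Theorem \ref{one}, keeping its architecture intact but replacing each explicit numerical estimate by its asymptotic counterpart. The constant $4/\pi$ in Theorem \ref{one} is an accumulation of small losses from explicit bounds that hold uniformly for $x\ge 2$: the error term $2x\log^2x/T$ in Theorem \ref{explicitformula}, the explicit forms of $\sum_\rho|\rho|^{-1}$ and of $\sum_{0<\gamma\le T}\gamma^{-1}\le \log^2T/(4\pi)+C\log T$, and the conversion between $\psi$, $\theta$, and $\pi$ via Lemma \ref{psitheta}. Allowing the conclusion only for sufficiently large $x$ lets each of these inequalities be replaced by a $1+o(1)$ version, so the constant is squeezed down arbitrarily close to its limiting value.

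Concretely, apply Theorem \ref{explicitformula} at $x$ and at $x-h$ with $h=(1+\epsilon)\sqrt{x}\log x$, and subtract, giving
\[
\psi(x)-\psi(x-h) \;=\; h \;-\; \sum_{|\gamma|<T}\frac{x^\rho-(x-h)^\rho}{\rho} \;+\; O\!\left(\frac{x\log^2x}{T}\right).
\]
Under RH, the summand is bounded by $\min\!\bigl(h/\sqrt{x-h},\,2\sqrt{x}/|\rho|\bigr)$. Split the sum at the crossover $V=2x/h\sim 2\sqrt{x}/((1+\epsilon)\log x)$ and use the asymptotics $N(V)\sim V\log V/(2\pi)$ and $\sum_{V<|\gamma|<T}|\rho|^{-1}\sim (\log^2T-\log^2V)/(2\pi)$. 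The low-frequency contribution is $(h/\sqrt{x})\cdot 2N(V)\sim \sqrt{x}\log x/\pi$; this is already smaller than $h$ by a factor of $\pi(1+\epsilon)$. The high-frequency contribution and the explicit-formula error must then be shown to be $o(\sqrt{x}\log x)$.

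The cleanest way to handle the latter is to replace $\mathbbm{1}_{(x-h,x]}$ by a smooth minorant $\phi\le\mathbbm{1}_{(x-h,x]}$ supported in $(x-h,x]$ with $\int\phi=h(1-o(1))$ and a Mellin transform $\Phi(s)$ whose $|\gamma|$-decay is rapid enough for the sum $\sum_\rho\Phi(\rho)$ to converge absolutely without truncation. In the weighted Perron identity
\[
\sum_n\Lambda(n)\phi(n) \;=\; \Phi(1) \;-\; \sum_\rho \Phi(\rho) \;+\; (\text{lower-order}),
\]
the explicit-formula error disappears entirely, and the same low/high split as before gives $\bigl|\sum_\rho\Phi(\rho)\bigr|=(1+o(1))\cdot(\text{something}<h)$. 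Since $\phi$ is a minorant, positivity of this expression forces $\psi(x)-\psi(x-h)>0$, and passage to $\theta$ via $\psi-\theta=O(\sqrt{x})$ is harmless.

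The hard part will be calibrating the smoothing: the smoother $\phi$ is, the faster $\Phi(\rho)$ decays in $|\gamma|$ (which is needed to kill the tail), but the more mass of the indicator is lost, pulling $\Phi(1)$ away from $h$. The balancing act is to let the smoothing scale $\delta=\delta(\epsilon,x)$ vanish as $x\to\infty$ slowly enough that $\Phi(1)\ge(1-\epsilon/2)h$ while fast enough that $\Phi(1/2+i\gamma)$ decays below $1/|\gamma|$ for $|\gamma|\gtrsim x/h$. Once this construction is in place, the remaining work is a routine asymptotic computation analogous to that of Theorem \ref{one}, with explicit constants replaced by their limits and the threshold $x_0(\epsilon)$ recovered from the $o(1)$ terms.
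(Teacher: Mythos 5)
Your sketch does not get from $4/\pi$ to $1+\epsilon$: it contains two concrete gaps. First, your calibration of the smoothing is internally inconsistent. You ask for a minorant $\phi$ of the indicator of $(x-h,x]$, supported in $(x-h,x]$, with $\int\phi=(1-o(1))h$, and whose Mellin transform already decays below $1/|\gamma|$ at the scale $|\gamma|\gtrsim x/h$. For such a $\phi$ the edge width must satisfy $\delta=o(h)$ to retain the mass, but then $|\Phi(1/2+i\gamma)|$ is still of size $\asymp\sqrt{x}/|\gamma|$ for $x/h\lesssim|\gamma|\lesssim x/\delta$ (one integration by parts only reduces $\phi$ to $\phi'$, whose $L^1$-mass is $O(1)$ regardless of $\delta$); the extra decay begins only at $|\gamma|\gtrsim x/\delta$. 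The intermediate sum then contributes
\[
\sum_{x/h<|\gamma|<x/\delta}\frac{\sqrt{x}}{|\gamma|}\ \asymp\ \frac{\sqrt{x}\bigl(\log^2(x/\delta)-\log^2(x/h)\bigr)}{\pi}\ \asymp\ \sqrt{x}\,\log x\,\log(h/\delta),
\]
which for $\delta=o(h)$ is $\gg\sqrt{x}\log x$. So the total $\sum_\rho|\Phi(\rho)|$ is not ``$(1+o(1))\cdot$ something $<h$''; it is dominated by this mid-frequency block, and the scheme does not close.

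Second, and more fundamentally, the proposal never touches the idea that actually produces the improvement from $4/\pi$ to $1+\epsilon$. You bound the low-frequency block purely by absolute values ($h/\sqrt{x-h}$ per zero, times $2N(V)$), and conclude it is already a factor $\pi(1+\epsilon)$ below $h$; from this you infer the rest is a tail to be squeezed. But that is not what happens. With any weight whose transform decays like $1/\gamma^2$ (triangular or trapezoidal), both the low- and high-frequency blocks are $\asymp\sqrt{x}\log x$; with the absolute-value low-frequency bound the total is exactly the paper's $\left(\frac{\alpha}{\pi}+\frac{4}{\pi\alpha}\right)\sqrt{x}\log(\alpha x/h)$, whose minimum at $\alpha=2$ is $\frac{2}{\pi}\sqrt{x}\log x$, i.e.\ Theorem~\ref{one}. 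To push the constant to $1$ the paper does not kill the tail and keep the crude low-frequency bound; it does the opposite: it lets $\alpha\to\infty$ (so the high-frequency piece vanishes) and replaces the crude low-frequency bound $|\Sigma_1|\leq\frac{h^2}{\sqrt{x-h}}\cdot 2N(\alpha x/h)$ by the precise estimate coming from the $\sin^2$ shape of the transform, namely
\[
|\Sigma_1|\ \leq\ 8x^{3/2}\sum_{0<\gamma<\alpha x/h}\frac{\sin^2(h\gamma/2x)}{\gamma^2}+O(\alpha h\sqrt{x}),
\]
evaluated via Ingham's Theorem~A as $\frac{2}{\pi}\bigl(\int_0^{\alpha/2}\frac{\sin^2 t}{t^2}\,dt\bigr)h\sqrt{x}\log(x/h)$, and $\int_0^\infty\frac{\sin^2 t}{t^2}\,dt=\pi/2$ is what yields the clean limiting constant. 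That oscillatory/averaged evaluation is the missing idea in your proposal, and without it the constant cannot go below $4/\pi$ by this route.
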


It is not clear to the author whether the optimal constant is $1$ or something less, though it would be interesting to see whether any improvements are readily forthcoming. In a paper with Greni\'{e} and Molteni \cite{dudekmoltenigrenie}, we show that a far more sophisticated method yields the same result as Theorem \ref{implicit}. Moreover, we actually make Theorem \ref{implicit} explicit in the following way.

\begin{theorem}\label{th:A1}
Suppose the Riemann hypothesis is true. Let $x\geq 2$ and 
$$d = \frac{1}{2} + \frac{2}{\log x}.$$
Then there is a prime in the interval $(x-c\sqrt{x}\log x,x+c\sqrt{x}\log x)$ and at least $\sqrt{x}$ primes in $(x-(c+1)\sqrt{x}\log
x,x+(c+1)\sqrt{x}\log x)$.
\end{theorem}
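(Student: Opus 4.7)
The plan is to apply a weighted explicit formula, with a smooth compactly supported kernel centred at $x$, and under RH reduce the problem to estimating an absolutely convergent sum over the nontrivial zeros. The parameter $d = 1/2 + 2/\log x$ is destined to serve as the abscissa of a shifted Perron contour: it sits just above the critical line, and because $x^{d} = e^{2}\sqrt{x}$, the integrand on that line contributes at the exact order of magnitude $\sqrt{x}$ against which the main term of size $\asymp \sqrt{x}\log x$ must dominate.

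Concretely, I would fix a nonnegative, even, smooth kernel $K$ supported on $[-1,1]$, normalised by $\int_{-1}^{1} K(u)\,du = 1$, and form the weighted Chebyshev sum
$$S(x,h) \;=\; \sum_{n\geq 1} \Lambda(n)\, K\!\left(\frac{n-x}{h}\right), \qquad h = c\sqrt{x}\log x.$$
The Mellin transform $\tilde{f}(s) = \int_{0}^{\infty} K((u-x)/h)\, u^{s-1}\,du$ satisfies $\tilde{f}(1) = h(1+O(h/x))$, and on $\operatorname{Re}(s) = d$ it decays rapidly in $|\operatorname{Im}(s)|$ because $K$ is smooth. Starting from Perron's formula at $\operatorname{Re}(s)=1+1/\log x$ (where $-\zeta'/\zeta$ is absolutely convergent as a Dirichlet series) and shifting the contour down past the pole at $s=1$ and across the critical strip to $\operatorname{Re}(s)=d$, one recovers
$$S(x,h) \;=\; \tilde{f}(1) \;-\; \sum_{\rho}\tilde{f}(\rho) \;+\; (\text{trivial residues and contour remainder}),$$
and under RH every $\rho = 1/2+i\gamma$ contributes $|\tilde{f}(\rho)|$ of size roughly $\sqrt{x}\,|\hat{K}(h\gamma/x)|$, where $\hat{K}$ is a Fourier-type transform of $K$ enjoying rapid decay.

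Combining the main term $\tilde{f}(1)\approx 2c\sqrt{x}\log x \cdot \int K$ with the zero sum, bounded via the explicit zero-count $N(T+1)-N(T) \leq \log T$ of \eqref{boundzerocount} together with the decay of $\hat{K}$, gives
$$S(x,h) \;\geq\; 2c\sqrt{x}\log x \;-\; A(K)\sqrt{x}\log x \;-\; O(\sqrt{x})$$
for an explicit constant $A(K)$ depending only on the kernel. Choosing $K$ and $c$ so that $2c > A(K)$ makes $S(x,h) > 0$, and after peeling off the $O(\sqrt{x})$ prime-power contribution via Lemma~\ref{psitheta}, positivity forces a genuine prime inside the support of $K$, that is, in $(x-c\sqrt{x}\log x, x+c\sqrt{x}\log x)$. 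For the second conclusion, widening the radius from $c$ to $c+1$ adds a surplus of $2\sqrt{x}\log x$ to the main term; since each prime contributes at most $(1+o(1))\log x$ to $S$, this slack yields at least $\sqrt{x}$ primes in the wider interval. Small $x$, say $x\leq e^{60}$, is verified directly from the tables of Ramar\'{e} and Saouter \cite{ramaresaouter}.

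The hard part is turning these order-of-magnitude estimates into fully explicit inequalities holding uniformly down to $x\geq 2$. The delicate tradeoff is the choice of $K$: a kernel close to the indicator of $[-1,1]$ maximises the main-term ratio but has slow transform decay, so the zero sum becomes large or outright divergent; a very smooth $K$ has excellent decay but shrinks the main term and so worsens the admissible constant $c$. Balancing these against the explicit zero-count bound is an extremal problem of Beurling--Selberg flavour whose solution controls the final value of $c$. A second subtlety is that the conventional Perron derivation, as in Theorem~\ref{explicitformula}, carries an error of size $O(x\log^{2}x/T)$; to match the resolution $\sqrt{x}\log x$ one must take $T$ very large and track the error uniformly, which is precisely where the shift $d=1/2+2/\log x$ earns its keep by keeping $x^{d}$ under explicit numerical control.
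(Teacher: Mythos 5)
The paper does not actually prove Theorem~\ref{th:A1}: immediately after the statement it says ``We will not prove this result here, as the details are somewhat similar to the proof of Theorem~\ref{implicit}'' and defers the argument to the joint paper with Greni\'e and Molteni \cite{dudekmoltenigrenie}. The approach alluded to (and the one carried out for Theorems~\ref{one} and~\ref{implicit} in Section~3.3) uses the Fej\'er weight $w(n)=1-|n-x|/h$ realised as a second difference of $\psi_1$, the untruncated explicit formula~(\ref{RVM}) via Lemmas~\ref{first} and~\ref{dog}, and then a split of the resulting zero sum at height $\alpha x/h$ bounded by~(\ref{trudgiancount}) and Ingham's Theorem~A. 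Your smooth-kernel/Mellin-transform template lives in the same Beurling--Selberg family, so the broad strategy is compatible in spirit with the paper's.

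However, two points in your sketch are wrong. First, you have misread the role of $d$. The quantity $d=1/2+2/\log x$ is the half-width coefficient of the interval (the statement evidently has a typo and $d$, $c$ denote the same constant), so the interval radius is $c\sqrt{x}\log x=\tfrac12\sqrt{x}\log x+2\sqrt{x}$, an explicit sharpening of the $(\tfrac12+\epsilon)\sqrt{x}\log x$ in Theorem~\ref{implicit}; it is not the abscissa of a shifted Perron contour, and the identity $x^{d}=e^{2}\sqrt{x}$ is incidental rather than the mechanism. Second, your contour step is internally inconsistent: if you shift from $\operatorname{Re}(s)=1+1/\log x$ only as far as $\operatorname{Re}(s)=d>\tfrac12$, then under RH you have crossed no nontrivial zero, so you do not collect the residue sum $-\sum_\rho\tilde f(\rho)$; that sum appears only after shifting past the critical line, at which point the value $d$ plays no role. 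Relatedly, the worry about the $O(x\log^2 x/T)$ error of the truncated formula (Theorem~\ref{explicitformula}) is a red herring here: with a compactly supported Lipschitz weight the zero sum is absolutely convergent, so one works with the untruncated formula~(\ref{RVM}) directly, exactly as in Section~3.3. Dropping the $d$-as-abscissa idea and re-anchoring to~(\ref{RVM}) would bring your sketch in line with the method the paper references.
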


We will not prove this result here, as the details are somewhat similar to the proof of Theorem \ref{implicit}. Notably, it follows from Theorem \ref{implicit} that Theorem \ref{cramer} can be taken with $c = 3 + \epsilon$ for sufficiently large values of $x$. It is also clear from the Prime Number Theorem that $c>1$.

Finally, the reader may wish to see the work of Heath-Brown and Goldston \cite{goldstonheathbrown}, for they show that one has an arbitrarily small constant in place of the $4/\pi$ in Theorem \ref{one} on the assumption of some more sophisticated conjectures.

We will now prove Theorems \ref{one} and \ref{implicit}. In consideration of the von Mangoldt function, we define the weighted sum (see Ingham \cite[Ch.\ 4]{inghambook} for more details)
$$\psi_1 (x) = \sum_{n \leq x} (x-n) \Lambda(n) = \int_2^x \psi(t) dt$$
and first prove an analogous explicit formula.

\begin{lemma} \label{first}
For $x>0$ and $x \notin \mathbb{Z}$ we have
\begin{equation} \label{explicit}
\psi_1(x) = \frac{x^2}{2} - \sum_{\rho} \frac{x^{\rho+1}}{\rho (\rho +1)} - x \log(2\pi) + \epsilon(x)
\end{equation}
where
$$|\epsilon(x)| < \frac{12}{5}.$$
\end{lemma}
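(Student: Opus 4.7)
The plan is to obtain the formula via a Perron-type contour integral that converges absolutely thanks to the weighting that defines $\psi_1$. Specifically, starting from the identity
$$\frac{1}{2\pi i}\int_{c-i\infty}^{c+i\infty} \frac{y^s}{s(s+1)}\, ds = \begin{cases} 0 & 0<y<1,\\ 1-1/y & y\geq 1,\end{cases}$$
and multiplying by $\Lambda(n) x$ before summing over $n$, the extra factor $1/(s+1)$ provides $|t|^{-2}$ decay on the vertical line, so the interchange of sum and integral is automatic for any $c>1$. This yields
$$\psi_1(x) = \frac{1}{2\pi i}\int_{c-i\infty}^{c+i\infty}\left(-\frac{\zeta'(s)}{\zeta(s)}\right)\frac{x^{s+1}}{s(s+1)}\,ds.$$

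The next step is to push the contour to $\mathrm{Re}(s) = -N$ for arbitrarily large $N$, collecting residues. The relevant singularities are: the simple pole of $-\zeta'/\zeta$ at $s=1$, contributing $x^2/2$; the simple pole from $1/s$ at $s=0$, contributing $-\tfrac{\zeta'(0)}{\zeta(0)}\,x = -x\log 2\pi$ using the standard values $\zeta(0) = -1/2$, $\zeta'(0) = -\tfrac{1}{2}\log 2\pi$; the simple pole from $1/(s+1)$ at $s=-1$, contributing the constant $\zeta'(-1)/\zeta(-1)$; the nontrivial zeros $\rho$, each contributing $-x^{\rho+1}/\bigl(\rho(\rho+1)\bigr)$; and the trivial zeros at $s=-2n$, contributing $-x^{-2n+1}/\bigl(2n(2n-1)\bigr)$ for $n\geq 1$. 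To discard the shifted contour, I would use Lemma~\ref{bound1} (giving $|\zeta'(s)/\zeta(s)| \ll \log|s|$ in the left half plane, away from the trivial zeros) together with the $1/|s|^2$ factor; choosing $N$ to be a half-integer to avoid trivial zeros, the vertical integral is $O\bigl((\log N)\,x^{-N+1}/N\bigr)$ and vanishes as $N\to\infty$.

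Collecting the residues other than the main term, the sum over $\rho$, and the $-x\log 2\pi$ term into $\epsilon(x)$, I obtain
$$\epsilon(x) = \frac{\zeta'(-1)}{\zeta(-1)} - \sum_{n=1}^{\infty}\frac{x^{-2n+1}}{2n(2n-1)}.$$
The last step is the explicit numerical bound $|\epsilon(x)|<12/5$. For $x\geq 1$ the tail series is $O(x^{-1})$ and is dominated in absolute value by its first term $x^{-1}/2 \leq 1/2$; for $0<x<1$ one checks the series sums to a closed form involving $\log\bigl((1+x)/(1-x)\bigr)$ and $\log(1-x^2)$ which is uniformly bounded on $(0,1)$. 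The constant $\zeta'(-1)/\zeta(-1)$ is computed from the known values $\zeta(-1) = -1/12$ and $\zeta'(-1) = 1/12 - \log A$ where $A$ is the Glaisher--Kinkelin constant, yielding a value close to $1.985$; combining with the bound on the tail gives the claimed $12/5$.

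The main obstacle is really the bookkeeping on the constant term: one must (i) justify the contour shift to $-\infty$ cleanly, and (ii) bound the series of residues from the trivial zeros uniformly in $x>0$, since the statement excludes only $x\in\mathbb{Z}$ and not small $x$. The first is handled by choosing contours through zero-free horizontal strips and exploiting the $|t|^{-2}$ gain; the second is purely a direct estimate on a rapidly convergent series in $x^{-2}$ (for $x>1$) together with a separate treatment for $x\in(0,1]$. Once both pieces are in hand, the bound $|\epsilon(x)|<12/5$ is a routine numerical verification.
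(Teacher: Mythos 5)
Your proof takes a genuinely different route from the paper. The paper simply integrates the Riemann--von Mangoldt explicit formula (\ref{RVM}) term by term over $(2,x)$, so that $\epsilon(x)$ comes out as a combination of boundary terms at $t=2$ together with $-\tfrac12\int_2^x\log(1-t^{-2})\,dt$, and then bounds each piece (using $\sum_\rho|\rho|^{-2}=2+\gamma-\log 4\pi\approx 0.046$ for the sum over zeros and $\log(16/27)$ for the integral). You instead derive the explicit formula for $\psi_1$ \emph{ab initio} via Perron with the kernel $1/\bigl(s(s+1)\bigr)$ and a contour shift to the left, which is the classical Ingham-style argument. Both are legitimate, and yours is more self-contained since it does not presuppose (\ref{RVM}); it does, however, require the extra work of justifying the discarded contour, which the paper sidesteps entirely.

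Two concrete issues in your write-up. First, your treatment of $0<x<1$ is wrong: the residue series from the trivial zeros, $\sum_{n\ge1} x^{-2n+1}/\bigl(2n(2n-1)\bigr)$, has terms growing like $x^{-2n}/n^2$ and \emph{diverges} when $0<x<1$; the closed form in $\log\bigl((1+x)/(1-x)\bigr)$ you invoke is for the series in $x^{2n}$, not $x^{-2n+1}$. The contour cannot be pushed to $-\infty$ in that range, so your expression for $\epsilon(x)$ is only valid for $x>1$. (To cover $0<x<2$ one should note $\psi_1(x)=0$ there and verify the bound directly; strictly speaking the RVM formula used by the paper has the same restriction via $\log(1-x^{-2})$, so the lemma's stated range $x>0$ is looser than either argument supports.) Second, your claim that the tail series for $x\ge1$ ``is dominated in absolute value by its first term $x^{-1}/2\le 1/2$'' is false: the terms are all positive, so the series is not bounded by its first term, and at $x=1$ it equals $\log 2\approx 0.693>1/2$. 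The conclusion still holds because $\epsilon(x)=\zeta'(-1)/\zeta(-1)-(\text{positive tail})$ lies in roughly $(1.29,\,1.99)\subset(-12/5,\,12/5)$ for $x\ge1$, but the stated justification of the numerical bound is not correct as written.
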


\begin{proof}
We integrate both sides of (\ref{RVM}) over the interval $(2,x)$ to get
$$\psi_1(x) = \frac{x^2}{2} - \sum_{\rho} \frac{x^{\rho+1}}{\rho (\rho +1)} - x \log(2\pi) + \epsilon(x)$$
where
$$|\epsilon(x)| < 2 + \bigg| \sum_{\rho} \frac{2^{\rho+1}}{\rho (\rho+1)} \bigg| + \frac{1}{2} \bigg| \int_2^x \log( 1- t^{-2} ) dt\bigg|.$$
Sending $x$ to infinity, the integral can be evaluated to yield $\log(16/27)$, and the sum over the zeroes can be estimated on the Riemann hypothesis by
\begin{eqnarray*}
\bigg| \sum_{\rho} \frac{2^{\rho+1}}{\rho (\rho+1)} \bigg| & < & 2^{3/2}  \sum_{\rho} \frac{1}{| \rho|^2}.
\end{eqnarray*}
The value of this sum is explicitly known. By comparing (10) and (11) of Davenport \cite[Ch.\ 12]{davenport}, we have that
$$-2 \sum_{\gamma > 0} \frac{\beta}{| \rho|^2} = -\frac{\gamma}{2} -1 + \frac{1}{2} \log 4 \pi$$
where $\gamma$ is Euler's constant. Thus,
$$\sum_{\rho} \frac{1}{| \rho|^2} = \gamma - 2 + \log 4 \pi,$$
and the result follows.
\end{proof}

We now consider the existence of prime numbers in an interval of the form $(x-h, x+h)$. We do this by defining the weight function
\begin{displaymath}
   w(n) = \left\{
     \begin{array}{ll}
      1 - |n - x|/h & : \hspace{0.1in}  x-h < n < x+h\\
       0   & : \hspace{0.1in} \text{otherwise,}
     \end{array}
   \right.
\end{displaymath} 
and considering the identity
\begin{eqnarray}
\sum_{x-h<n<x+h} \Lambda(n) w(n) & = & \frac{1}{h} \Big(\psi_1(x+h) - 2 \psi_1(x) + \psi_1 (x-h)\Big).
\end{eqnarray}
One can verify this by expanding the weighted sum on the left hand side. An application of Lemma \ref{first} to the right side of the above gives us the following result.

\begin{lemma} \label{dog}
Let $x>0$ and $h>0$. Then
$$\sum_{x-h<n<x+h} \Lambda(n) w(n) = h - \frac{1}{h} \Sigma + \epsilon(h)$$
where 
$$\Sigma = \sum_{\rho} \frac{ (x+h)^{\rho+1} - 2x^{\rho+1} +(x-h)^{\rho+1}}{\rho(\rho+1)}$$
and 
$$|\epsilon(h)| < \frac{48}{5 h}.$$
\end{lemma}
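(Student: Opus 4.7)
The proof is essentially a direct computation from Lemma \ref{first}, but it requires first establishing the identity claimed just before the lemma. My plan is as follows.

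First, I would verify by direct expansion that
\begin{equation*}
\sum_{x-h<n<x+h} \Lambda(n) w(n) = \frac{1}{h}\Bigl(\psi_1(x+h) - 2\psi_1(x) + \psi_1(x-h)\Bigr).
\end{equation*}
Writing $\psi_1(y) = \sum_{n\leq y}(y-n)\Lambda(n)$ and splitting the range of summation into $n\leq x-h$, $x-h<n\leq x$, and $x<n\leq x+h$, the contribution from the first range cancels identically since $(x+h-n)-2(x-n)+(x-h-n)=0$. In the middle range only $\psi_1(x+h)$ and $\psi_1(x)$ contribute, giving $(x+h-n)-2(x-n)=h-(x-n)$, which equals $h\,w(n)$. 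In the upper range only $\psi_1(x+h)$ contributes, giving $(x+h-n)=h-(n-x)=h\,w(n)$. Summing and dividing by $h$ yields the claim.

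Next, I would apply Lemma \ref{first} to each of $\psi_1(x+h)$, $\psi_1(x)$, and $\psi_1(x-h)$. The quadratic pieces combine to give
\begin{equation*}
\tfrac{1}{2}(x+h)^2 - x^2 + \tfrac{1}{2}(x-h)^2 = h^2,
\end{equation*}
the sum over zeros telescopes into exactly $-\Sigma$ as defined in the statement, and the linear terms $-y\log(2\pi)$ cancel because $-(x+h)+2x-(x-h)=0$.

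For the error term, I would use the triangle inequality on $\epsilon(x+h)-2\epsilon(x)+\epsilon(x-h)$, where each piece is bounded by $12/5$ from Lemma \ref{first}, giving a total error of at most $12/5 + 2\cdot 12/5 + 12/5 = 48/5$. Dividing everything by $h$ produces the factor $1/h$ in both $\Sigma$ and the error, yielding $|\epsilon(h)|<48/(5h)$ as claimed.

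There is essentially no obstacle here — the only mild care is ensuring that $x\pm h$ and $x$ avoid the integers so that Lemma \ref{first} applies directly; this can be handled either by restricting attention to such $x$ (and invoking a density/continuity argument at the end of the application in Theorems \ref{one} and \ref{implicit}), or by noting that the sum on the left is left-continuous in $x$ off the integers and thus extends with the same bound.
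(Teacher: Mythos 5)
Your proposal is correct and follows exactly the route the paper takes: establish the identity $\sum \Lambda(n)w(n) = \frac{1}{h}(\psi_1(x+h)-2\psi_1(x)+\psi_1(x-h))$ by case-splitting, apply Lemma \ref{first} to each term, and combine the three error bounds of $12/5$ via the triangle inequality to get $48/(5h)$. The paper states this tersely without writing out the expansion or the domain caveat about $x\pm h$ avoiding integers, so you have in fact supplied more detail than the original.
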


We use this lemma to prove our results. Our focus is estimating the sum $\Sigma$, which we consider in two parts: $\Sigma = \Sigma_1 + \Sigma_2.$ Here, $\Sigma_1$ ranges over the zeroes $\rho$ with $|\gamma| < \alpha x / h$, where $\alpha>0$ is to be chosen later, and $\Sigma_2$ is the contribution from the remaining zeroes.

For $\Sigma_1$, we notice that the summand may be written as
$$\int_{x-h}^{x+h} (h-|x-u|) u^{\rho-1} du,$$
the absolute value of which can be bounded above by
$$\frac{1}{\sqrt{x-h}} \int_{x-h}^{x+h} (h-|x-u|) du = \frac{h^2}{\sqrt{x-h}} .$$
It follows that
\begin{eqnarray*}
\Sigma_1 & \leq & \frac{h^2}{\sqrt{x-h}} \sum_{ |\gamma| < \alpha x / h}  1 \\
& = & \frac{2h^2}{\sqrt{x-h}} N(\alpha x/h),
\end{eqnarray*}
where $N(T)$ is as before, and thus the bound (\ref{trudgiancount}) gives us that
\begin{equation} \label{sigma1}
| \Sigma_1 | < \frac{\alpha x h}{\pi \sqrt{x-h}} \log(\alpha x/h)
\end{equation}
when $\alpha x/h > 15$. We can estimate $\Sigma_2$ trivially on the Riemann hypothesis by
\begin{eqnarray*}
|\Sigma_2| & < & 4 (x+h)^{3/2} \sum_{|\gamma| > \alpha x/h} \frac{1}{\gamma^2} \\
& = & 8 (x+h)^{3/2} \sum_{\gamma > \alpha x/h} \frac{1}{\gamma^2} \\
& < & \frac{4 h (x+h)^{3/2}}{\pi \alpha x} \log(\alpha x/h),
\end{eqnarray*}
where the last line follows from Lemma 1 (ii) of Skewes \cite{skewes}. We also note that Lehman \cite{lehman} has explicit bounds for the sum $\sum_{\gamma > 0} \gamma^{-k}$, which could be useful when working with explicit formulas for the general sum (see Ingham \cite[Ch.\ 4]{ingham})
$$\psi_k(x) = \sum_{n \leq x} (x-n)^k \Lambda(n).$$ 
Now, putting our estimates for $\Sigma_1$ and $\Sigma_2$ into Lemma \ref{dog} we have
\begin{eqnarray*}
\sum_{n} \Lambda(n) w(n) & > & h - \frac{1}{h} ( |\Sigma_1| + |\Sigma_2|) - \frac{48}{5h} \\
& = & h - \bigg(\frac{\alpha x}{\pi \sqrt{x-h}} +\frac{4 (x+h)^{3/2}}{\pi \alpha x} \bigg) \log(\alpha x/h) - \frac{48}{5h}.
\end{eqnarray*}
Notice that as we will choose $h$ to be $o(x)$, it follows that the term in front of the logarithm is asymptotic to
$$ \Big(\frac{\alpha}{\pi} + \frac{4}{\pi \alpha} \Big) \sqrt{x}.$$
It is a straightforward exercise in differential calculus to show that $\alpha = 2$ will minimise this term, and thus
\begin{eqnarray*}
\sum_{n} \Lambda(n) w(n) & > &  h - \frac{ 2}{\pi} \bigg(\frac{ x}{ \sqrt{x-h}} +\frac{ (x+h)^{3/2}}{ x} \bigg) \log(2 x/h) - \frac{48}{5h},
\end{eqnarray*}
or rather
\begin{eqnarray*} 
\psi(x+h) - \psi(x-h) & = & \sum_{x-h < n \leq x+h} \Lambda(n) \\
& > &  h - \frac{ 2}{\pi} \bigg(\frac{ x}{ \sqrt{x-h}} +\frac{ (x+h)^{3/2}}{ x} \bigg) \log(2 x/h) - \frac{48}{5h}.
\end{eqnarray*}
As before, an application of Lemma \ref{psitheta} allows us to remove the contribution from the higher prime powers \textit{viz.}
\begin{eqnarray*}
\sum_{x - h < p \leq x + h} \log p & > &  h - \frac{ 2}{\pi} \bigg(\frac{ x}{ \sqrt{x-h}} +\frac{ (x+h)^{3/2}}{ x} \bigg) \log(2 x/h) \\
& & -(1+7.5 \cdot 10^{-7}) \sqrt{x+h} - 3 (x+h)^{1/3} +0.9999\sqrt{x-h} - \frac{48}{5h} 
\end{eqnarray*}
for all $x \geq 121$ and $\alpha x/h > 15$. If we set $h = d \sqrt{x} \log x$, the leading term on the right hand side can be shown to be asymptotic to
\begin{equation} \label{asym}
\Big( d - \frac{2}{\pi} \Big) \sqrt{x} \log x + \frac{4}{\pi} \sqrt{x} \log \log x.
\end{equation}
Thus, for $d \geq 2/\pi$ we have that there is a prime in the interval
$$(x-d \sqrt{x}\log x, x+ d \sqrt{x} \log x],$$
and so we choose $d = 2/ \pi$. Then, using a monotonicity argument we have this for all $x \geq 65000$. Replacing $x + d \sqrt{x} \log x$ with $x$, we have that there is a prime in the interval
$$(x - 2 d \sqrt{x} \log x, x]$$
for all 
$$x \geq 65000 + \frac{2}{\pi} \sqrt{65000} \log(65000) > 66798$$
where $2d = 4/\pi$. We complete the proof of Theorem \ref{one} by using \textsc{Mathematica} to verify the theorem for the remaining values of $x$.

It now remains to prove Theorem \ref{implicit} by showing that the constant $4/\pi$ can be reduced to $(1+\epsilon)$ by a more detailed analysis of the sum $\Sigma_1$. Bounding trivially, we have that
$$|\Sigma_1| \leq x^{3/2} \sum_{|\gamma| < \alpha x/h} \frac{|(1+h/x)^{3/2} e^{i \gamma \log(1+h/x)} + (1-h/x)^{3/2} e^{i \gamma \log(1-h/x)} - 2|}{\gamma^2}.$$
By noting the straightforward bound
$$\log(1 \pm h/x) = \pm\frac{h}{x} + O\bigg(\frac{h^2}{x^2} \bigg)$$
which holds for $h = o(x)$, we have that
\begin{eqnarray*}
e^{i \gamma \log (1 \pm h/x)} & = & e^{\pm i \gamma h/x} \Big( 1 + O \Big( \gamma \frac{h^2}{x^2} \Big) \Big) \\
& = & e^{\pm i \gamma h/x} + O(\alpha h/x)
\end{eqnarray*}
as $|\gamma| < \alpha x/h$. Using this estimate and 
$$(1 \pm h/x)^{3/2} = 1 + O(h/x),$$
one obtains 
\begin{equation*}
|\Sigma_1| \leq 8 x^{3/2} \sum_{0<\gamma < \alpha x/h} \frac{ \sin^2 (\frac{h \gamma}{2 x}) }{\gamma^2} + O(\alpha  h \sqrt{x}).
\end{equation*}
This sum can be estimated using Theorem A in Ingham's book \cite[Pg.\ 18]{inghambook} and (\ref{trudgiancount}) to get that
$$|\Sigma_1| \leq \frac{4 x^{3/2}}{\pi}  \int_{\gamma_1}^{\alpha x/h} \frac{ \log(u) \sin^2 (\frac{hu}{2x})}{u^2}du + O(\alpha  h \sqrt{x})$$
where $\gamma_1 = 14.1347\ldots$ denotes the height of the first nontrivial zero of the Riemann zeta-function. Employing the substitution $u =2xt/h$ and simplifying shows that
\begin{equation} \label{bekay}
| \Sigma_1 | \leq \bigg( \frac{2 }{\pi} \int_0^{\alpha/2} \frac{\sin^2 t}{t^2} dt \bigg) h \sqrt{x} \log(x/h)  + O(\alpha  h \sqrt{x}).
\end{equation}

Now, estimating $\Sigma_2$ as in the previous section, we have from Lemma \ref{dog} and (\ref{bekay}) that
$$\sum_{x-h < n < x+h} \Lambda(n) \geq h - \Big( \frac{4}{\pi \alpha} + \frac{2}{\pi} \int_0^{\alpha/2} \frac{\sin^2 t}{t^2} dt\Big) \sqrt{x} \log(x/h)+O(\alpha \sqrt{x}).$$
If we set $h = c \sqrt{x} \log x$, and choose
\begin{equation} \label{integralnote}
c >  \frac{2}{\pi \alpha} + \frac{1}{\pi} \int_0^{\alpha/2} \frac{\sin^2 t}{t^2} dt,
\end{equation}
then it follows that
$$\sum_{x-h < n < x+h} \Lambda(n) \gg \sqrt{x} \log x.$$
Note that the integral in (\ref{integralnote}) is equal to $\pi/2$ if one sends $\alpha$ to infinity. Therefore, we have $c = 1/2 + \epsilon$ provided that we take $\alpha$ to be sufficiently large. Of course, to avoid inflating the error term $O(\alpha \sqrt{x})$, one finds that $\alpha = \log \log x$ works fine. One can also remove the contribution of prime powers to the sum to show that there is a prime in the interval
$$(x-(1/2+\epsilon) \sqrt{x} \log x, x+(1/2+\epsilon) \sqrt{x} \log x]$$
for all sufficiently large values of $x$. This completes the proof of Theorem \ref{implicit}. 

Finally, as mentioned in Section 1, we can also show that Theorem \ref{cramer} can be taken with $c=3+\epsilon$ provided that $x$ is sufficiently large. For if we take
$$c = 1+  \frac{2}{\pi \alpha} + \frac{1}{\pi} \int_0^{\alpha/2} \frac{\sin^2 t}{t^2} dt = \frac{3}{2} + \epsilon,$$
then again removing the contribution from prime powers, we have that
$$\sum_{x-h < p < x+h} \log p \geq \sqrt{x} \log x + O(\sqrt{x} \log \log x).$$
The result then follows from using partial summation.

In the next chapter, we migrate from the problem of primes in short intervals to that of additive problems in number theory. Our main characters -- the primes and the zeroes of $\zeta(s)$  -- remain central to the story.

\newpage


\chapter{Some Results in Additive Number Theory}
\label{chapter4}

\begin{quote}
\textit{``Reeling and Writhing, of course, to begin with,'' the Mock Turtle replied; ``and then the different branches of Arithmetic -- Ambition, Distraction, Uglification, and Derision.''}
\end{quote}

Additive number theory deals with the additive properties of numbers. Specifically, we seek analogues to the fundamental theorem of arithmetic where the operation is addition instead of multiplication. As mentioned in Chapter 1, the following two conjectures of Goldbach form the centrepiece of this theory.

\begin{conj}[The Binary Goldbach Conjecture]
Every even integer greater than two can be written as the sum of two primes.
\end{conj}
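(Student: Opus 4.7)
The plan is to deploy the Hardy--Littlewood circle method in the binary setting, in the shape in which Vinogradov handled the ternary problem. Writing $r(N)$ for the weighted representation count $\sum_{n_1+n_2=N} \Lambda(n_1)\Lambda(n_2)$, one has
$$r(N) = \int_0^1 S(\alpha)^2 e^{-2\pi i N \alpha}\, d\alpha, \qquad S(\alpha) = \sum_{n \leq N} \Lambda(n) e^{2\pi i n \alpha},$$
and a positive lower bound for $r(N)$ would settle the conjecture for all sufficiently large even $N$, leaving a (potentially enormous) computational tail. First I would partition $[0,1]$ into major arcs $\mathfrak{M}$ centred at Farey fractions $a/q$ with $q$ small in terms of a parameter $Q = Q(N)$, and their complement $\mathfrak{m}$. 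On $\mathfrak{M}$ the Siegel--Walfisz theorem, or an explicit analogue via zero-free regions and the machinery of Chapter 2, produces the expected asymptotic $\mathfrak{S}(N) N + o(N)$, where the singular series $\mathfrak{S}(N) = 2 C_2 \prod_{p\mid N,\, p>2} \frac{p-1}{p-2}$ is positive for even $N \geq 4$ and vanishes for odd $N$. This half is, in principle, explicitable.

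The hard part --- and the reason binary Goldbach has resisted every classical attack --- is the minor arc contribution. Parseval gives $\int_0^1 |S(\alpha)|^2\, d\alpha \sim N \log N$, so any bound of the shape
$$\int_{\mathfrak{m}} |S(\alpha)|^2\, d\alpha \leq \Bigl(\sup_{\alpha \in \mathfrak{m}} |S(\alpha)|\Bigr) \cdot \Bigl(\int_0^1 |S(\alpha)|^2\, d\alpha\Bigr)^{1/2}$$
forces one to prove $\sup_{\mathfrak{m}} |S(\alpha)| = o(\sqrt{N/\log N})$ in order to beat the main term $\mathfrak{S}(N) N$. This is the notorious square-root barrier for exponential sums over primes, and it is precisely what Vinogradov's method fails to cross: even on the Generalised Riemann Hypothesis one obtains only $O(\sqrt{N}\,\log^c N)$, short by a single power of $\log N$. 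I would attempt to squeeze that logarithm out via a bilinear decomposition (Vaughan's identity, or Heath-Brown's identity for finer Type II control) combined with sharp mean-value estimates, hoping to swallow the deficit by exploiting cancellation between the two nearly equal primes $p_1, p_2 \approx N/2$ --- a feature unavailable in the ternary problem.

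The main obstacle I expect to be insurmountable by classical means is exactly this single factor of $\log N$ on the minor arcs: no input currently available, not even the Riemann hypothesis or the pair correlation conjecture, is known to force it. A parallel sieve-theoretic attack in the tradition of Chen faces the equally notorious parity problem, which is why the state of the art has remained at $N = p + P_2$ for half a century and why Chapter 4's Estermann-type results only achieve $p + \text{squarefree}$. In keeping with the explicit flavour of this thesis, the most honest realistic output of my attempt would be a threshold $N_0$ above which the circle method asymptotic is provably positive under a Lindel\"of-strength hypothesis, together with a computational verification for $N \leq N_0$ in the spirit of Platt's zero verifications; an unconditional proof would require a genuinely new idea rather than a sharpening of existing technology.
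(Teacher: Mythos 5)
This statement appears in the paper as a \emph{conjecture}, not a theorem: the paper gives no proof of the binary Goldbach conjecture, and none exists. So there is nothing in the paper to compare your argument against line by line; what can be assessed is whether your account of the obstruction is accurate, and it is. You have correctly identified that the circle method, even armed with GRH, stalls at the square-root barrier on the minor arcs by roughly one power of $\log N$, and that the sieve-theoretic route runs into the parity problem, which is exactly why Chen's $N = p + P_2$ has stood for fifty years. This matches the paper's own framing: the thesis cites Chen's theorem as the state of the art, notes Yamada's explicit version (every even $N > \exp(\exp(36))$ is $p + P_2$), observes that Helfgott settled the \emph{ternary} conjecture by exactly the asymptotic-plus-computation template you describe, and then deliberately retreats to the tractable Estermann-type substitute (every integer $> 2$ is $p + m$ with $m$ squarefree), which it does prove in full. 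One small point of precision: in your Cauchy--Schwarz display the right-hand side should read $\bigl(\sup_{\mathfrak{m}}|S|\bigr)\cdot\int_0^1 |S|^2$, not $\sup\cdot\bigl(\int|S|^2\bigr)^{1/2}$, if the aim is to bound $\int_{\mathfrak{m}}|S|^2$ by extracting one factor of $\sup$; as written the inequality has the wrong homogeneity, though this does not affect your (correct) conclusion that the barrier is a single logarithm. Your closing judgment — that an unconditional proof needs a new idea rather than a tightening of existing constants — is the same judgment the thesis makes implicitly by working on weaker additive problems instead.
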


\begin{conj}[The Ternary Goldbach Conjecture]
Every odd integer greater than five can be written as the sum of three primes.
\end{conj}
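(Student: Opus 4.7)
The plan is to attack this conjecture via the Hardy--Littlewood circle method, following the broad strategy of Vinogradov and the eventual completion by Helfgott. For an odd integer $N$, I would define the exponential sum
$$S(\alpha) = \sum_{p \leq N} (\log p) \, e^{2\pi i p \alpha}$$
and note that
$$R(N) := \sum_{\substack{p_1+p_2+p_3=N \\ p_i \text{ prime}}} (\log p_1)(\log p_2)(\log p_3) = \int_0^1 S(\alpha)^3 \, e^{-2\pi i N \alpha}\, d\alpha.$$
It suffices to show $R(N)>0$ for every odd $N>5$. I would split the interval $[0,1]$ into major arcs $\mathfrak{M}$ (small neighbourhoods of rationals $a/q$ with $q$ small) and minor arcs $\mathfrak{m}$ (the complement).

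Next I would handle the major arcs by approximating $S(\alpha)$ in terms of Gauss-type sums and the von Mangoldt function summed over arithmetic progressions. Using an explicit Siegel--Walfisz-type estimate (where explicit versions exist unconditionally up to modest moduli, and can be strengthened under GRH), the major-arc contribution evaluates to $\tfrac{1}{2}\,\mathfrak{S}(N)\,N^2$ plus a controlled error, where
$$\mathfrak{S}(N) = \prod_{p\mid N}\Bigl(1-\tfrac{1}{(p-1)^2}\Bigr)\prod_{p\nmid N}\Bigl(1+\tfrac{1}{(p-1)^3}\Bigr)$$
is the singular series. A straightforward check shows $\mathfrak{S}(N) \gg 1$ uniformly over odd $N$, so this contribution alone is of order $N^2$.

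For the minor arcs, I would bound $|S(\alpha)|$ pointwise using a Vinogradov-type estimate: if $|\alpha - a/q|\leq q^{-2}$ with $(a,q)=1$, then
$$|S(\alpha)| \ll \bigl(Nq^{-1/2} + N^{4/5} + N^{1/2}q^{1/2}\bigr)(\log N)^A.$$
Combining this sup-bound with a Parseval estimate $\int_0^1 |S(\alpha)|^2\,d\alpha \ll N\log N$ gives the minor-arc contribution as $o(N^2)$, so it is dominated by the main term once $N$ is large enough. This yields $R(N)>0$ for all $N \geq N_0$ with an effective $N_0$; one then verifies the remaining odd $N<N_0$ by direct computation, for instance by exhibiting a prime $p<N$ such that $N-p$ is the sum of two primes, using the known verification of binary Goldbach up to large heights.

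The main obstacle, and the reason this conjecture resisted a full proof for so long, is closing the gap between the analytic threshold $N_0$ and what is computationally checkable. The classical Vinogradov argument produces an $N_0$ of the order $\exp(\exp(16.038))$ or worse, which is hopelessly beyond direct verification. The genuine difficulty lies in sharpening every constant in the minor-arc bounds and in the explicit prime-counting estimates in arithmetic progressions, so as to push $N_0$ down to a range (on the order of $10^{30}$) where a combination of clever checkpointing and verified binary-Goldbach data closes the gap. I would therefore expect the bulk of the work to be in producing genuinely explicit large-sieve, log-free zero density, and exponential sum estimates, rather than in the circle-method skeleton itself.
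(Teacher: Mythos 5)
The paper does not actually prove this statement: it is recorded as Goldbach's ternary conjecture, and the text immediately defers to Helfgott's 2013 theorem (\cite{helfgott2013}), summarising the history of effective thresholds from Vinogradov (effective but unspecified) through Borodzin, Chen--Wang, and Liu--Wang to Helfgott's $C=5$. So there is no in-paper proof to compare against; the question is whether your sketch faithfully reflects the Vinogradov--Helfgott argument. At the level of architecture it does: circle method, major/minor arc decomposition, a singular series bounded uniformly below on the odds, an $L^\infty\times L^2$ treatment of the minor arcs, and a computational closure for small $N$. You also correctly diagnose that the real obstacle lies in the explicit constants rather than in the circle-method skeleton.

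However, the specific estimates you write down would not close the gap, and that is where the substance lies. The classical Vinogradov--Vaughan minor-arc bound you quote, even in its strongest explicit forms, leads to thresholds of the order $3.33\cdot 10^{43000}$ (Chen--Wang) or $2\cdot 10^{1346}$ (Liu--Wang), nowhere near $10^{30}$. Helfgott's essential innovation is to abandon the sharply truncated $\sum_{p\leq N}(\log p)e^{2\pi i p\alpha}$ in favour of smoothed sums $\sum_{n}\Lambda(n)\,\eta(n/x)\,e^{2\pi i n\alpha}$ for carefully chosen weights $\eta$, which dramatically tightens both the major-arc asymptotics and the large-sieve-driven minor-arc bounds in explicit form. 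On the major arcs, an unconditional Siegel--Walfisz input is unusable explicitly because of ineffectivity in the modulus (the putative Siegel zero); the actual proof relies instead on Platt's rigorous numerical verification of GRH for Dirichlet $L$-functions of modulus up to roughly $3\cdot 10^5$, which makes the major-arc evaluation genuinely effective. Finally, the computational closure is not a single appeal to verified binary Goldbach but a ladder argument of Helfgott and Platt that pushes verified ternary Goldbach up to about $8.875\cdot 10^{30}$. Your proposal is thus a faithful description of the strategy, but as a proof it is missing exactly the three hard ingredients -- smoothing, GRH verification on the major arcs, and the ladder -- which you gesture at in your closing paragraph but do not supply.
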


The latter of these conjectures was recently proven by Helfgott \cite{helfgott2013}. Vinogradov \cite{vinogradov} first proved in 1937 that every odd integer $n > C$ can be written as the sum of three primes (provided that $C$ is sufficiently large). Explicit results for $C$ appeared soon after; Borodzin showed in 1939 that the one could take $C=3^{3^{15}}$. This was improved to $C=3.33 \cdot 10^{43000}$ by Chen and Wang \cite{chenwang} and to $C=2 \cdot 10^{1346}$ by Liu and Wang \cite{liuwanggoldbach}. Helfgott finished this line of work, showing that $C=5$ does the trick.

The binary conjecture, on the other hand, has not submitted itself as gently as the ternary conjecture. We do not yet even know if every sufficiently large even integer may be written as the sum of two primes. However, by a result of Chen~\cite{chen}, we know that every sufficiently large even integer can be written in the form $p+r$, where $p$ is a prime and $r$ is either a prime or a product of two primes. Another impressive step in the right direction is the result of Linnik, that every sufficiently large integer $n$ can be written as
$$n=p_1+p_2+2^{a_1}+2^{a_2}+\cdots + 2^{a_r}$$
where $p_1$ and $p_2$ are primes, $a_i$ is a positive integer for all $i$, and $r \leq K$ where $K$ is an absolute constant. One can see the paper of Heath-Brown and Puchta \cite{heathbrownpuchta} for a short history on the explicit bounds for $K$. Both of these results are weak forms of Goldbach's binary conjecture, and it would be interesting to see if one could make either of these explicit in the same style as Helfgott's theorem. That is, can we furnish an actual number $C$ such that all even integers $n > C$ can be written in at least one of the above two ways? Of course, as both of these are weak versions of Goldbach's binary conjecture, we would optimistically expect $C=2$ to work.

In this chapter, we prove two explicit results in the additive theory of numbers. Our first result is a weak form of Goldbach's binary conjecture; we prove that every integer greater than two can be written as the sum of a prime and a square-free number. This completes a theorem of Estermann \cite{estermann}, who proved the same theorem but only for sufficiently large integers. 

We then examine the result of Erd\H{o}s that every sufficiently large integer $n$ such that $n \not\equiv 1 \mod 4$ can be written as the sum of the square of a prime and a square-free number. We complete this result, showing that this is true for all $n \geq 10$ satisfying this congruence condition. This is joint work with Dave Platt at the University of Bristol, and so I will be clear in stating our individual contributions.

Importantly, the two results presented in this chapter serve to measure the current state of play in the explicit theory of numbers. We exist in a time where long-standing results may be revisited and sharpened, and as such we provide a bounty of open problems for the interested reader in Chapter 6.

\section{On a theorem of Estermann}

It was first shown by Estermann \cite{estermann} in 1931 that every sufficiently large positive integer $n$ can be written as the sum of a prime and a square-free number, that is, 
$$n = p_1 + p_2 p_3 \cdots p_m$$
where $p_i$ is prime for all $1 \leq i \leq m$ and $p_i \neq p_j$ for $2 \leq i < j \leq m$. There is no restriction on the size of $m$ here, however we already know by the later work of Chen \cite{chen} that one can take $m \leq 3$.

Moreover, Estermann proved that the number $T(n)$ of such representations satisfies the asymptotic formula
\begin{equation} \label{estermannformula}
T(n) \sim \frac{c n}{\log n} \prod_{ p | n} \bigg( 1+\frac{1}{p^2-p-1} \bigg),
\end{equation}
where 
\begin{equation} \label{artin}
c=\prod_p \bigg( 1 - \frac{1}{p(p-1)} \bigg) = 0.3739558\ldots
\end{equation}
is a product over all prime numbers which is known as Artin's constant (see Wrench's computation \cite{wrench} for more details). Notably, it follows that $T(n) \gg \pi(n)$, and so we expect $(n-p)$ to be square-free for a positive proportion of primes $p$ below $n$.

In 1935, Page \cite{page} improved Estermann's result by giving a bound for the order of the error term in (\ref{estermannformula}), using estimates for the error in the Prime Number Theorem for arithmetic progressions. Mirsky \cite{mirsky} extended these results in 1949 to count representations of an integer as the sum of a prime and a $k$-free number, that is, a number which is not divisible by the $k$-th power of any prime. More recently, in 2005, Languasco \cite{languasco} treated the possibility of Siegel zeroes (see Davenport \cite{davenport} for a discussion on this) with more caution so as to provide better bounds on the error.

The purpose of this section is to prove the following theorem, completing the result of Estermann. This work has been accepted for publication \cite{dudekestermann} in \textit{The Ramanujan Journal}.

\begin{theorem} \label{estermann}
Every integer greater than two is the sum of a prime and a square-free number. 
\end{theorem}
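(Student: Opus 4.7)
The plan is to follow Estermann's original derivation of \eqref{estermannformula} while keeping every error term explicit, and then to dispatch the remaining small cases by direct verification. Writing $T(n)$ for the number of representations of $n$ as a prime plus a square-free number, and using the identity $\mu^2(m) = \sum_{d^2 \mid m}\mu(d)$, we rewrite
\begin{equation*}
T(n) = \sum_{p \leq n-1} \mu^2(n-p) = \sum_{d \leq \sqrt{n-1}} \mu(d)\, \pi(n-1; d^2, n),
\end{equation*}
where $\pi(x;q,a)$ denotes the number of primes $p \leq x$ with $p \equiv a \pmod{q}$. The aim is to show $T(n) \geq 1$ for every $n$ above some explicit threshold $N_0$, and then verify Theorem \ref{estermann} for $2 < n \leq N_0$ by computer.

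The natural strategy is to split the $d$-sum at two thresholds $D_1 < D_2$. For the main range $d \leq D_1$ I would invoke a fully explicit Siegel--Walfisz-type estimate to replace $\pi(n-1;d^2,n)$ by $\text{Li}(n-1)/\phi(d^2)$, noting that $\phi(d^2) = d\phi(d)$ and that any $d$ with $\gcd(d,n) > 1$ contributes at most one prime. Extending the resulting Dirichlet series to infinity,
\begin{equation*}
\sum_{\substack{d \geq 1 \\ \gcd(d,n)=1}} \frac{\mu(d)}{d\,\phi(d)} = c \prod_{p \mid n}\left(1 + \frac{1}{p^2-p-1}\right),
\end{equation*}
with $c$ the Artin constant of \eqref{artin}, produces a main term of size at least $c\, \text{Li}(n-1)$, which is the expected dominant contribution. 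For $D_1 < d \leq D_2$ I would apply an explicit Brun--Titchmarsh inequality of the form $\pi(x;q,a) \leq 2x/(\phi(q)\log(x/q))$ and bound the sum using the convergence of the series $\sum 1/(d\phi(d))$. For $d > D_2$ I would use the trivial observation that the number of $p \leq n-1$ with $d^2 \mid n-p$ is at most $n/d^2 + 1$, which sums to an error of order $n/D_2$.

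The principal obstacle is the inevitable tension between these three ranges. Explicit Siegel--Walfisz bounds that are truly uniform in the modulus, and which do not depend on an effective resolution of a possible Landau--Siegel zero, only apply for $d$ up to a small power of $\log n$; this forces $N_0$ to be extravagantly large and far beyond what one could check directly on a computer. I would navigate this by borrowing recent explicit estimates for $\pi(x;q,a)$ at very small conductors, such as those of Ramar\'e--Rumely, and by isolating small prime factors of $n$ separately so that the corresponding Dirichlet characters are handled individually rather than subsumed into a crude trivial bound. Once $N_0$ is brought to a tractable size in this way, the remaining range $2 < n \leq N_0$ is settled by a straightforward sieve: for each such $n$, one subtracts primes $p$ from $n$ in turn and tests whether $n-p$ is square-free, a procedure which succeeds almost immediately because the density of square-free integers is $6/\pi^2$.
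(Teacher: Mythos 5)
Your plan follows essentially the same route as the paper: rewrite the representation count via $\mu^2(m)=\sum_{d^2\mid m}\mu(d)$ as a sum over moduli $d^2$ (the paper works with the logarithmically weighted $R(n)=\sum_{p\leq n}\mu_2(n-p)\log p$ and hence $\theta(n;a^2,n)$, a cosmetic convenience for applying Ramar\'e--Rumely), split the modulus range into three pieces, treat the smallest moduli with explicit prime-number-theorem-in-AP bounds, the middle range with Brun--Titchmarsh, the tail trivially, extract Artin's constant as the main term, and finish by computation below an explicit threshold. The only refinement you do not quite pin down is that the paper cuts the first range off at the fixed value $a\leq 13$ (the moduli tabulated by Ramar\'e--Rumely) and so never needs a uniform Siegel--Walfisz input at all, takes $A=1/4$ for the middle cutoff to reach $n\geq 10^{10}$, and for even $n<10^{10}$ simply cites the Goldbach verification of Oliveira e Silva et al., leaving only odd $n<10^{10}$ for the sieve check.
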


We prove this theorem by working in the same manner as Mirsky \cite{mirsky}, though we employ explicit estimates on the error term for the Prime Number Theorem in arithmetic progressions. Specifically, if we let
$$\theta(x;k,l) = \sum_{\substack{p \leq x \\ p \equiv l (\text{mod } k)}} \log p$$
where the sum is over primes $p$, we require estimates of the form
\begin{equation} \label{error}
\bigg| \theta(x;k,l) - \frac{x}{\varphi(k)} \bigg| < \frac{\epsilon x}{\varphi(k)}
\end{equation}
where $\epsilon>0$ is sufficiently small and $x$ and $q$ are suitably ranged. Good estimates of this type are available due to Ramar\'{e} and Rumely \cite{ramarerumely}, but are only provided for finitely many moduli $k$. It turns out, however, that this is sufficient, as the Brun--Titchmarsh theorem (which we will state soon) is enough for the remaining cases. Later in the chapter -- when we work on the result of Erd\H{o}s -- Platt actually improves the Ramar\'{e}--Rumely estimates for the Prime Number Theorem in arithmetic progressions. We delay this improvement until then, as it is not required in the proof of Theorem \ref{estermann}.

As with many of the proofs in this thesis, we will first employ analytic methods to obtain explicit bounds. In this section, this will allow us to prove the theorem in the range $n \geq 10^{10}$. Then we verify the remaining cases computationally. 

As such, we let $n\geq10^{10}$ be a positive integer and $\mu: \mathbb{N} \rightarrow \{-1,0,1\}$ denote the M\"obius function, where $\mu(n)$ is zero if $n$ is not square-free; otherwise $\mu(n)=(-1)^{\omega(n)}$ where $\omega(n)$ denotes the number of distinct prime factors of $n$. We also stipulate that $\mu(1)=1$. For a positive integer $n$, it can be shown that the sum
$$\mu_2 (n) = \sum_{a^2 | n} \mu(a)$$
is equal to $1$ if $n$ is square-free and zero otherwise. Thus, it follows that the expression
$$T(n)=\sum_{p \leq n} \mu_2(n-p)$$
counts the number of ways that $n$ may be expressed as the sum of a prime and a square-free number. We will employ logarithmic weights so as to use the known prime number estimates with more ease, and so we define
$$R(n) = \sum_{p \leq n} \mu_2(n-p) \log p.$$
We note that $n$ is the sum of a prime and a square-free number if and only if $R(n) >0$. As such, the majority of this paper is dedicated to finding a lower bound for $R(n)$. The expression for $R(n)$ can be rearranged so as to involve weighted sums over the prime numbers in arithmetic progressions:
\begin{eqnarray*}
R(n) & = & \sum_{p \leq n} \log p \sum_{a^2 | (n-p)} \mu(a) \\
& = & \sum_{a \leq n^{1/2}} \mu(a) \sum_{\substack{p \leq n \\ a^2 | (n-p)}} \log p \\
& = & \sum_{a \leq n^{1/2}} \mu(a) \theta(n; a^2, n).
\end{eqnarray*}
We will split the range of this sum into three parts, for we shall use a different technique to bound each of them. Thus, we may write
\begin{equation} \label{bigdaddy}
R(n) = \Sigma_1 + \Sigma_2 + \Sigma_3 
\end{equation}
where
\begin{eqnarray*}
\Sigma_1 & = & \sum_{a \leq 13 } \mu(a) \theta(n; a^2, n), \\
\Sigma_2 & = & \sum_{13 < a \leq n^A } \mu(a) \theta(n; a^2, n), \\
\Sigma_3 & = & \sum_{n^A < a \leq n^{1/2} } \mu(a) \theta(n; a^2, n),
\end{eqnarray*}
and $A \in (0,1/2)$ is to be chosen later to minimise the value of $N$ such that $R(n)>0$ for all $n>N$. We will use the estimates of Ramar\'{e} and Rumely \cite{ramarerumely} to bound $\Sigma_1$; this is the reason for the specific range of $a$ in this sum. We will then use the Brun--Titchmarsh theorem to bound $\Sigma_2$. Finally, $\Sigma_3$ will be bounded using trivial estimates. 

Starting with the first sum, Theorem 1 of Ramar\'{e} and Rumely \cite{ramarerumely} provides estimates of the form
\begin{equation}
\bigg| \theta(n;a^2,n) - \frac{n}{\varphi(a^2)} \bigg| < \frac{\epsilon_a n}{\varphi(a^2)}.
\end{equation}
In particular, by looking through the square moduli in Table 1 of their paper, we have values of $\epsilon_a$ for all $1 \leq a \leq 13$ which are valid for all $n\geq 10^{10}$. We therefore have that
\begin{eqnarray} \label{big1}
\Sigma_1 & > & n \sum_{ a \leq 13} \bigg( \frac{\mu(a)}{\varphi(a^2)} - \epsilon_a \frac{\mu^2(a)}{\varphi(a^2)} \bigg) \nonumber \\
& > &  n \bigg( \sum_{a=1}^{\infty} \frac{\mu(a)}{\varphi(a^2)} - \sum_{a>13 } \frac{\mu(a)}{\varphi(a^2)}- \sum_{a \leq 13} \frac{ \epsilon_a \mu^2(a)}{\varphi(a^2)} \bigg).
\end{eqnarray}
We now wish to estimate the three sums in the above parentheses. We note that the leftmost sum is equal to Artin's constant (\ref{artin}) \textit{viz.}
\begin{eqnarray*}
\sum_{a=1}^{\infty} \frac{\mu(a)}{\varphi(a^2)} = \prod_{p} \bigg(1- \frac{1}{p(p-1)}\bigg)=c.
\end{eqnarray*}
Wrench \cite{wrench} has computed this constant to high accuracy; it will suffice for the purpose of Theorem \ref{estermann} to note that $c>0.373$. 

We will, for the moment, neglect the middle sum in (\ref{big1}), for it shall be considered jointly with a term in the estimation of $\Sigma_2$. Thus, in our estimation of $\Sigma_1$, it remains to manually compute the upper bound for the rightmost sum. This is a straightforward task which is done in reference to Table 1 of Ramare and Rumely's paper \cite{ramarerumely}. We get that
$$ \sum_{a \leq 13} \frac{\epsilon_a \mu^2(a)}{\varphi(a^2)} < 0.005.$$

We now bring $\Sigma_2$ into the fray by calling on Montgomery and Vaughan's \cite{MV} explicit version of the Brun--Titchmarsh theorem:
\begin{theorem} \label{bt}
Let $x>0$ and $y>0$ be real numbers, and let $k$ and $l$ be relatively prime positive integers with $k<y$. Then
$$\pi(x+y;k,l) - \pi(x;k,l) < \frac{2 y}{\varphi(k) \log(y/k)}$$
where $\pi(x;k,l)$ denotes the number of prime numbers $p \leq x$ such that $p \equiv l \mod k$.
\end{theorem}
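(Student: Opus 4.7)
The plan is to prove this via Selberg's upper bound sieve ($\Lambda^{2}$ method) applied to the set
$$A = \{n \in (x, x+y] : n \equiv l \pmod{k}\},$$
which has $|A| = y/k + O(1)$. Any prime $p$ counted by $\pi(x+y;k,l) - \pi(x;k,l)$ with $p > z$ lies in $A$ and has no prime factor below $z$, so first I would bound the sifted count $\#\{n \in A : (n, P(z)) = 1\}$ where $P(z) = \prod_{p < z,\, p \nmid k} p$, then dispose of the negligibly many primes below $z$ by choosing $z$ suitably small.

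The core step is Selberg's quadratic form argument: introduce weights $\lambda_{d}$ supported on squarefree $d \leq \sqrt{D}$ with $(d,k)=1$ and $\lambda_{1}=1$, and expand
$$\#\{n \in A : (n, P(z)) = 1\} \;\leq\; \sum_{n \in A} \Bigl(\sum_{d \mid n} \lambda_{d}\Bigr)^{2} \;=\; \sum_{d_{1},d_{2}} \lambda_{d_{1}}\lambda_{d_{2}} \sum_{\substack{n \in A \\ [d_{1},d_{2}] \mid n}} 1.$$
Because $(d_{1}d_{2}, k) = 1$, the congruences $n \equiv l \pmod{k}$ and $[d_{1},d_{2}] \mid n$ combine by CRT, and the inner sum equals $y/(k[d_{1},d_{2}]) + O(1)$. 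The main term $\frac{y}{k}\sum_{d_{1},d_{2}} \lambda_{d_{1}}\lambda_{d_{2}}/[d_{1},d_{2}]$ is minimised subject to $\lambda_{1}=1$ by the classical Selberg choice, giving value $y/(kG)$ where
$$G \;=\; \sum_{\substack{d \leq \sqrt{D} \\ (d,k)=1}} \frac{\mu^{2}(d)}{g(d)}, \qquad g(d) = \prod_{p\mid d}(p-1).$$
Unfolding $1/g(d)$ into a sum over all integers coprime to $k$ shows $G \geq (\varphi(k)/k)\log\sqrt{D}$, so the main term is bounded by $2y/(\varphi(k)\log D)$. Choosing $D = y/k$ then yields the shape of the desired inequality.

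The main obstacle is obtaining the \emph{strict} inequality with no additive error term on the right. A naive sieve produces an error of size $O(\sqrt{D})$ coming from the $O(1)$ slack in each congruence count, which would force a right-hand side of the form $2y/(\varphi(k)\log(y/k)) + O(\sqrt{y/k})$. Montgomery and Vaughan's trick is to recast the Selberg sieve using an integral representation that absorbs the remainder terms into the leading coefficient rather than adding them on; this requires handling the quadratic form over $\lambda$'s with a slightly different normalisation that cooperates with the large sieve inequality. Once this refinement is in place, the bound $\pi(x+y;k,l)-\pi(x;k,l) < 2y/(\varphi(k)\log(y/k))$ follows for all $k < y$ with no further constraint. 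I would conclude by verifying that the parameter choice $z = \sqrt{y/k}$ is admissible and that the contribution of the at-most-$z$ small primes is already subsumed by the stated bound.
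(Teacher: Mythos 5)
This theorem is not proved in the thesis: it is quoted verbatim from Montgomery and Vaughan and invoked as a tool (the text reads ``calling on Montgomery and Vaughan's explicit version of the Brun--Titchmarsh theorem''). There is therefore no in-paper proof to compare against, and what you have written is an attempt to reconstruct Montgomery and Vaughan's original 1973 argument.

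Your Selberg-sieve setup is sound through the main term: the weights $\lambda_d$ on squarefree $d\leq\sqrt D$ with $(d,k)=1$, the CRT-combined congruence count $y/(k[d_1,d_2])+O(1)$, the minimised quadratic form of value $y/(kG)$, the lower bound $G\geq\frac{\varphi(k)}{k}\log\sqrt D$, and the choice $D=y/k$, which delivers the target main term $\frac{2y}{\varphi(k)\log(y/k)}$. Two things then go wrong. First, a small slip: with $|\lambda_d|\leq 1$ supported on $d\leq\sqrt D$, the Selberg remainder $\sum_{d_1,d_2}\lambda_{d_1}\lambda_{d_2}\cdot O(1)$ is of size $O(D)$, not $O(\sqrt D)$; with $D=y/k$ it is of the same order as the main term, so the naive Selberg sieve only yields the bound up to a $(1+o(1))$ factor, not the strict, error-free inequality claimed. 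Second --- and this is the genuine gap --- your remedy for the remainder, ``recast the Selberg sieve using an integral representation that absorbs the remainder terms into the leading coefficient,'' is a label rather than a mechanism, and it does not describe what Montgomery and Vaughan actually do. Their argument is built on the arithmetic form of the large sieve: parameterise $p=l+km$ so that $m$ runs through roughly $y/k$ consecutive integers from which one residue class modulo each prime $q\leq z$ with $q\nmid k$ is forbidden, apply their quantitative large sieve inequality, and combine it with the lower bound $L(z)=\sum_{q\leq z,\,(q,k)=1}\mu^2(q)/\varphi(q)\geq\frac{\varphi(k)}{k}\log z$. Even then, the straightforward estimate $Z\leq(N+Q^2)/L(Q)$ with $Q=\sqrt N$ produces the constant $4$, not $2$; obtaining the stated constant with no additive error is exactly the nontrivial content of Montgomery and Vaughan's theorem, and your sketch stops short of supplying it.
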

A trivial application of Theorem \ref{bt} effects the bound
$$\theta(n; a^2,n) < 2 \bigg(\frac{\log n}{\log(n/a^2)} \bigg) \frac{n}{\varphi(a^2)}.$$
Clearly, in the range $13 < a \leq n^{A}$ we may bound 
$$\frac{\log n}{\log(n/a^2)} \leq \frac{1}{1-2A}$$
and so we have the estimate that
$$\theta(n; a^2,n) = \frac{n}{\varphi(a^2)} + \epsilon \bigg(\frac{1+2A}{1-2A} \bigg)\frac{n}{\varphi(a^2)}$$
where $|\epsilon| < 1$. We may then bound $\Sigma_2$ from below by
$$\Sigma_2 > n \sum_{13 < a \leq n^A} \frac{\mu(a)}{\varphi(a^2)} - n \bigg(\frac{1+2A}{1-2A}\bigg) \sum_{13 < a \leq n^A } \frac{ \mu^2(a)}{\varphi(a^2)}.  $$
We can then add this to our estimate for $\Sigma_1$ to get
\begin{eqnarray} \label{sigma1and2}
\Sigma_1 + \Sigma_2 & > & n \bigg( c - 0.005 - \sum_{a> n^A} \frac{\mu(a)}{\varphi(a^2)} - \bigg(\frac{1+2A}{1-2A}\bigg) \sum_{13 < a \leq n^A} \frac{ \mu^2(a)}{\varphi(a^2)} \bigg) \nonumber \\
& > & n \bigg(c -0.005 - \bigg(\frac{1+2 A}{1- 2 A} \bigg) \sum_{a > 13} \frac{\mu^2 (a)}{\varphi(a^2)}\bigg).
\end{eqnarray}
We estimate the sum in the above inequality by writing it as follows:
\begin{eqnarray*}
\sum_{a > 13} \frac{\mu^2 (a)}{\varphi(a^2)} =\sum_{a=1}^{\infty} \frac{\mu^2(a)}{\varphi(a^2)} - \sum_{a\leq 13} \frac{\mu^2(a)}{\varphi(a^2)}.
\end{eqnarray*}
The infinite sum is less than 1.95 (see Ramar\'{e} \cite{ramare} for example), and the finite sum can be computed by hand to see that the sum in (\ref{sigma1and2}) is bounded above by 0.086. Thus
\begin{equation} \label{sigma12}
\Sigma_1 + \Sigma_2 > n \bigg(c - 0.005 - \bigg(\frac{1+2A}{1-2A}\bigg)(0.086)\bigg).
\end{equation}

For $\Sigma_3$, we have trivially that
\begin{eqnarray} \label{sigma3bound}
|\Sigma_3| & < & \sum_{n^{A} < a \leq n^{1/2}} \theta(n;a^2,n) \nonumber  \\
& < & \sum_{n^{A} < a \leq n^{1/2}} \bigg(1+\frac{n}{a^2} \bigg) \log n. \nonumber 
\end{eqnarray}
This follows from the fact that there can be at most $([n/a^2]+1)$ integers not exceeding $n$ that are divisible by $a^2$. Therefore, we have that
\begin{eqnarray}
|\Sigma_3| & < & n^{1/2} \log n + n \log n \sum_{n^{A} < a \leq n^{1/2}} \frac{1}{a^2} \nonumber \\
& < & n^{1/2} \log n + n^{1-2A} \log n + n \log n \int_{n^{A}}^{n^{1/2}} t^{-2} dt \nonumber \\
& = & n^{1-2A} \log n + n^{1-A} \log n.
\end{eqnarray}

We can now provide an explicit lower bound for $R(n)$. Combining our explicit estimates (\ref{sigma12}) and (\ref{sigma3bound}) with (\ref{bigdaddy}) and dividing through by $n$ we have that
\begin{eqnarray*}
\frac{R(n)}{n} & > & c- 0.005-\bigg(\frac{1+2A}{1-2A}\bigg)(0.086) \\
& - &n^{-1/2} \log n - n^{-2A} \log n - n^{-A} \log n.
\end{eqnarray*}
For sufficiently small $A$ and large $n$, the right hand side will be positive. For any $n$, we have $c > 0.373$; it is a simple matter to choose $A=1/4$ and verify that the right hand side is positive for all $n \geq 10^{10}$. That is, Theorem \ref{estermann} is true for all integers $n \geq 10^{10}$.

It so remains to prove this result for all integers in the range $3 \leq n <10^{10}$. If $n$ is even, we have the numerical verification by Oliviera e Silva, Herzog and Pardi \cite{silva} that all even integers up to $4 \cdot 10^{18}$ can be written as the sum of two primes. Thus, every even integer greater than two may be written as the sum of a prime and a square-free number. 

Therefore, we need to check that every odd integer $3 \leq n < 10^{10}$ can be written as the sum of a prime and a square-free number. Our algorithm is as follows: we partition the range $3 \leq n < 10^{10}$ into intervals of size $10^7$. That is, we are considering the intervals $I_a=(a \cdot 10^7, (a+1) \cdot 10^7)$ for all integers $a$ such that $0 \leq a < 1000$. 

The interval $I_0$ is straightforward to check; in general, for the interval $I_a$, we use \textsc{Mathematica} to generate a decreasing list $(p_1,p_2,\ldots, p_{100})$ of the 100 greatest primes which do not exceed $a \cdot 10^7$. Then, starting with the first odd number $n \in I_a$, we check to see first if $n-p_i$ is square-free as $i$ ranges from 1 to 100, moving to $n+2$ as soon as we have found a prime which works. The purpose of choosing primes close to the interval is soon apparent, for  the relatively small value of $n-p_i$ makes it a simple exercise to check for square-freeness. This is a straightforward computation; we ran this on \textsc{Mathematica} and it took just under 3 days on a 2.6GHz laptop. This computation could no doubt be optimised; for example, we could have first generated a list of the square-free numbers up to some bound by employing the sieve of Eratosthenes but with square moduli. Then, checking that $n-p_i$ is an element of this list would be far quicker than running a square-free test for each number. 

\section{On a theorem of Erd\H{o}s}

In 1935, quite soon after the aforementioned theorem of Estermann \cite{estermann}, Erd\H{o}s \cite{erdos} proved that every sufficiently large integer $n \not\equiv 1 \text{ mod } 4$ may be written as the sum of the square of a prime and a square-free number.  The congruence condition here is sensible: if $n\equiv 1 \mod 4$ then $4|(n-p^2)$ for any odd prime $p$, and so $(n-p^2)$ is clearly not square-free. This only leaves the case $p=2$, but $n-4$ fails to be square-free infinitely often\footnote{For example, one can consider the congruence class $13 \text{ mod } 36$.}.

It is the objective of this section to make explicit the proof provided by Erd\H{o}s, to the end of proving the following theorem. 

\begin{theorem} \label{erdos}
Let $n \geq 10$ be an integer such that $n \not\equiv 1 \mod 4$. Then there exists a prime $p$ and a square-free number $m$ such that $n=p^2+m$.
\end{theorem}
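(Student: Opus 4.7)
The approach would mirror the proof of Theorem \ref{estermann} in the preceding section: bound a weighted counting function from below using explicit prime estimates, then resolve a residual range by direct computation. I would begin by defining
$$R(n) = \sum_{p \leq \sqrt{n}} \mu_2(n - p^2) \log p,$$
so that $n = p^2 + m$ with $m$ square-free whenever $R(n) > 0$. Using the identity $\mu_2(m) = \sum_{a^2 \mid m} \mu(a)$ and interchanging the order of summation,
$$R(n) = \sum_{a \leq \sqrt{n}} \mu(a) \sum_{\substack{p \leq \sqrt{n} \\ p^2 \equiv n \pmod{a^2}}} \log p.$$

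Next, I would partition the outer range into three pieces: a small range $a \leq a_0$ aligned with the moduli tabulated by Ramar\'e--Rumely (possibly extended by Platt, as foreshadowed in the text), a middle range $a_0 < a \leq n^A$ with $A \in (0,1/2)$ to be optimised, and a tail $n^A < a \leq \sqrt{n}$. For small $a$, I would write the inner sum as $\sum_{\ell} \theta(\sqrt{n};\, a^2, \ell)$ over the solutions $\ell$ of $\ell^2 \equiv n \pmod{a^2}$ coprime to $a^2$ and replace each by $\sqrt{n}/\varphi(a^2)$ using explicit estimates of the form (\ref{error}). For the middle range I would apply the Brun--Titchmarsh theorem (Theorem \ref{bt}) after noting that the number of admissible residue classes is a count $r(a^2;n)$ of square roots of $n$ modulo $a^2$. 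The tail is then bounded trivially using the fact that at most $r(a^2;n)(\sqrt{n}/a^2 + 1)$ primes $p \leq \sqrt{n}$ satisfy $p^2 \equiv n \pmod{a^2}$.

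Assembling these estimates, the main term becomes
$$\sqrt{n} \sum_{a=1}^{\infty} \frac{\mu(a)\, r(a^2;n)}{\varphi(a^2)},$$
an Euler product in which each local factor depends multiplicatively on the density $r(p^2;n)/\varphi(p^2)$. This is precisely where the congruence hypothesis enters: at the prime $2$, the local factor vanishes in the residue class $n \equiv 1 \pmod 4$, reflecting the obstruction noted just before the theorem statement; for $n \not\equiv 1 \pmod 4$ the Euler product admits a positive lower bound. With a suitable choice of $A$, this will yield an explicit threshold $N_0$ above which $R(n) > 0$.

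Two obstacles stand out. First, unlike in the Estermann setting, the local factors $r(p^2;n)/\varphi(p^2)$ depend on $n$ through its quadratic residue behaviour mod each $p^2$, so the main term is not a universal constant such as Artin's; obtaining an explicit positive lower bound uniform over all admissible $n$ will require a careful multiplicative analysis, with the prime $2$ (and perhaps $3$) treated in cases. Second, because the main term here is only of size $\sqrt{n}$ rather than $n/\log n$, the analytic estimates will deliver a threshold $N_0$ that is substantially larger relative to what sieves can trivially verify, so sharpening the explicit prime-in-progressions estimates (as Platt does in this section, per the excerpt) is essential to bring $N_0$ within computational reach. Once done, a direct sieve-based verification using the precomputed list of square-free numbers will dispatch the residual range $10 \leq n < N_0$.
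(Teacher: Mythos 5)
Your decomposition differs in kind from the paper's: you use the inclusion--exclusion identity $\mu_2(n-p^2)=\sum_{a^2\mid n-p^2}\mu(a)$ and sum over all square-free $a\leq\sqrt{n}$, whereas the paper forgoes the M\"obius cancellation entirely and applies a straight union bound over odd primes $q$, comparing $\sum_{q<\sqrt{n}}\sum_{p:\,q^2\mid n-p^2}\log p$ against $\theta(\sqrt{n})$. Either framework could in principle be made to work, so the difference in decomposition is not itself the problem.

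The genuine gap is your treatment of the tail. You say the tail $n^A<a\leq\sqrt{n}$ is bounded ``trivially using the fact that at most $r(a^2;n)(\sqrt{n}/a^2+1)$ primes $p\leq\sqrt{n}$ satisfy $p^2\equiv n\pmod{a^2}$.'' In the Estermann setting this trivial bound suffices because the main term has size $\approx 0.37\,n$, dwarfing the residual $O(n^{1/2}\log n + n^{1-A}\log n)$. Here the main term is only $c\sqrt{n}$ with $c<1$, while the ``$+1$'' contribution alone already produces, after weighting by $\log p\asymp\tfrac12\log n$,
\[
\frac{\log n}{2}\sum_{n^A<a\leq\sqrt{n}}\mu^2(a)\,r(a^2;n)\;\gg\;\sqrt{n}\log n,
\]
since $r(a^2;n)$ averages to a constant over square-free $a$ (it is $2^{\omega(a)}$ with ``probability'' $\sim 2^{-\omega(a)}$). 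This swamps the main term and the argument does not close. Your second listed obstacle gestures at the small size of the main term, but attributes the difficulty to the eventual computational threshold $N_0$ rather than to the fact that the trivial tail estimate simply does not converge below the main term.

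The paper's proof contains a substantive additional idea that you have not identified, and which is precisely what rescues the tail. For $A\sqrt{n}\leq q<\sqrt{n}$ the relation $q^2\mid n-p^2$ forces $n=p^2+Bq^2$ with $1\leq B<A^{-2}$ a bounded integer; for each such $B$ the number of proper representations of $n$ by the binary quadratic form $x^2+By^2$ is at most $s\cdot 2^{\omega(n)}$, where $s$ is the number of automorphs, and $\omega(n)$ is controlled explicitly by Robin's bound $\omega(n)\leq 1.3841\log n/\log\log n$. A further congruence observation ($p^2\equiv1\pmod{24}$ for $p>3$, hence $B\equiv n-1\pmod{24}$) restricts $B$ to a single residue class, cutting the count of admissible $B$ by a factor of roughly $24$. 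The resulting tail contribution is $n^{o(1)}\log n = o(\sqrt{n})$, which is what makes the whole argument viable. You would need this, or a comparably strong replacement for the trivial bound, before the rest of your outline (including the Euler-product lower bound, which is correct, and the computational cleanup) can carry through.
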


The proof of this theorem, and thus the content of this section, was completed jointly with Dave Platt at the University of Bristol. This work has now been accepted by the \textit{LMS Journal of Computation and Mathematics} \cite{dudekplatt2}. 

The reader should note that Theorem \ref{erdos} is, in some sense, stronger than Theorem \ref{estermann}, for the sequence of squares of primes is far more sparse than the sequence of primes. We prove Theorem \ref{erdos} in a similar way, by combining modern explicit results on primes in arithmetic progressions and computation. We push both of these resources to their limits in doing so, at least, a lot further than in the last section.

Specifically, we extend the results of Ramar\'{e} and Rumely \cite{ramarerumely}, and our computational algorithm is far more involved than that of the previous section. These last two important points were significant contributions on Platt's part, and before these were brought to the table, I could only prove Theorem \ref{erdos} for all $n \geq 10^{17}$. Therefore, whilst the method of this section is my adaptation of Erd\H{o}s' original proof, it was Platt's contributions that allowed the proof to get over the line.

The proof may be roughly outlined as follows. For any integer $n$ satisfying the conditions of the above theorem, we want to show that there exists a prime $p < \sqrt{n}$ such that $n-p^2$ is square-free. That is, we require some prime $p$ such that
$$n-p^2 \not\equiv 0 \mod q^2$$
for all odd primes $q < \sqrt{n}$. The idea is to consider, for some large $n$ and each odd prime $q< \sqrt{n}$, those \textit{mischievous} primes $p$ that satisfy the congruence
$$n \equiv p^2 \mod q^2.$$
Then, for each $q$ we explicitly bound from above (with logarithmic weights) the number of such primes $p$. Summing over all moduli $q$ gives us an upper bound for the weighted count of the mischievous primes \textit{viz.}
$$\sum_{q < \sqrt{n}} \sum_{\substack{p < \sqrt{n} \\ n \equiv p^2 \text{ mod } q^2}} \log p.$$
It is then straightforward to show that for large enough $n$, the above sum is less than the weighted count of \emph{all} primes less than $\sqrt{n}$, and therefore there must exist a prime $p < \sqrt{n}$ such that $n-p^2$ is not divisible by the square of any prime. 

This method works well, and allows us first to prove Theorem \ref{erdos} for all integers $n \geq 2.5 \cdot 10^{14}$ which satisfy the congruence condition. We then eliminate the remaining cases by direct computation to complete the proof.

Note that from before, the paper of Ramar\'{e}--Rumely \cite{ramarerumely} provides us with bounds on the  error term in the Prime Number Theorem in arithmetic progressions. Specifically, these are of the form
\begin{equation*}
\bigg| \theta(x;k,l) - \frac{x}{\varphi(k)} \bigg| < \epsilon(k,x_0) \frac{x}{\varphi(k)}
\end{equation*}
and
\begin{equation*}
\bigg| \theta(x;k,l) - \frac{x}{\varphi(k)} \bigg| < \omega(k,x_1) \sqrt{x}
\end{equation*}
for various ranges of $x\geq x_0$ and $x\leq x_1$ respectively. These computations were in turn based on Rumely's numerical verification of the generalised Riemann hypothesis (GRH) \cite{Rumely1993} for various moduli and to certain heights. 

The GRH is a more general form of the Riemann hypothesis, and asserts that a broad class of Dirichlet series (namely the Dirichlet $L$-functions) have all of their nontrivial zeroes on the line $\text{Re}(s)=1/2$ (see Davenport \cite{davenport} for further discussion). Since the computations of Rumely, Platt has verified GRH for a wider range of moduli and to greater heights \cite{Platt2013}. For our purposes, we rely only on the following result.
\begin{lemma}\label{grh}
Let $q$ be a prime satisfying $17 \leq q \leq 97$. All nontrivial zeroes $\rho$ of Dirichlet \emph{L}-functions derived from characters of modulus $q^2$ with $\emph{Im}(\rho) \leq 1000$ have $\emph{Re}(\rho)=1/2$.
\end{lemma}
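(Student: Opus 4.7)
The proof will be essentially computational, extending the style of verification pioneered by Rumely \cite{Rumely1993} and refined by Platt \cite{Platt2013}. The plan is to verify the assertion rigorously for each prime modulus $q \in \{17, 19, 23, \ldots, 97\}$ (so for each conductor $q^2$) and each of the $\varphi(q^2) - 1$ primitive Dirichlet characters of that modulus, since the non-principal non-primitive case reduces to characters of smaller modulus already handled by previous work and the principal character introduces only the trivial factors. The overall strategy is a standard one: count the zeros of the completed $L$-function $\Lambda(s,\chi)$ inside the rectangle $R=\{s : 0 \le \text{Re}(s) \le 1,\ 0 \le \text{Im}(s) \le 1000\}$, locate the same number of sign changes of a suitable real-valued function on the critical segment $\{\tfrac{1}{2}+it : 0 \le t \le 1000\}$, and conclude that every zero in $R$ lies on $\text{Re}(s)=1/2$.

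First, I would set up rigorous ball/interval arithmetic so that every evaluation of $L(s,\chi)$ and its logarithmic derivative carries a certified error bound. For evaluation in and near the critical strip I would use an approximate functional equation together with truncated Gauss-type sums, controlling tail error terms explicitly; this is the technical heart of the computation but the analytic ingredients are standard. Second, I would compute the total number $N(T,\chi)$ of zeros of $\Lambda(s,\chi)$ with $0 < \text{Im}(\rho) \le T$ for $T$ slightly above $1000$ by applying the argument principle to a rectangle enclosing $R$: the real axis contribution is computed via the functional equation (the $\Gamma$-factors and conductor), and the vertical sides contribute negligibly because $L(s,\chi)$ is close to $1$ far to the right and its reflected behaviour on the left is controlled by the functional equation. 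Turing's method provides the rigorous upper bound on the number of possible zeros off the critical line in a slightly larger box, ensuring no zeros are missed just above height $T$.

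Third, I would isolate zeros on the critical line itself by examining sign changes of Hardy's $Z$-function adapted to $\chi$, i.e.\ a real-valued rotation $Z(t,\chi) = e^{i\theta(t,\chi)} L(\tfrac{1}{2}+it,\chi)$, sampled at a sufficiently fine grid of $t$-values in $[0,1000]$ with rigorous interval bounds. Each certified sign change yields at least one zero on the critical line, and counting these produces a lower bound on the number of critical-line zeros. The key verification step is showing that the lower bound from the sign-change count equals the total count $N(1000,\chi)$ obtained from the argument principle; equality forces every zero in $R$ to lie on the critical line.

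The hard part will be bookkeeping: roughly $\varphi(q^2)-1$ primitive characters for each of $20$ primes, each requiring certified evaluation at thousands of grid points up to height $1000$, with error control tight enough that no sign change is spuriously claimed or missed. This is exactly the kind of computation carried out in \cite{Platt2013}, and the lemma follows by inspecting the output of that verification for the specified range of moduli. No new analytic ideas are needed beyond what appears there; the statement is tailored so that the prior computation already covers the claim.
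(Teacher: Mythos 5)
Your proposal is correct and, in substance, matches the paper: the lemma is established simply by citing Theorem 10.1 of Platt's paper \cite{Platt2013}, and the methodology you outline (certified interval arithmetic, argument-principle zero counting with Turing's method, and sign changes of the rotated $Z$-function) is exactly what underlies that computation. The only difference is one of emphasis — the paper gives a one-line citation, whereas you spell out the verification strategy that the cited reference carries out.
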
 
\begin{proof}
See Theorem 10.1 of Platt's paper \cite{Platt2013}.
\end{proof}
We can therefore extend the results of Ramar\'{e}--Rumely with the following lemma.

\begin{lemma}
For $x>10^{10}$ we have 
\begin{equation*}
\bigg| \theta(x;q^2,l) - \frac{x}{\varphi(q^2)} \bigg| < \epsilon(q^2,10^{10}) \frac{x}{\varphi(q^2)}
\end{equation*}
for the values of $q$ and $\epsilon(q^2,10^{10})$ in Table \ref{tab:new_eq}.
\end{lemma}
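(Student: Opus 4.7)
The plan is to follow the Ramaré--Rumely framework of \cite{ramarerumely} essentially verbatim, but to substitute in Platt's sharper verification of GRH (Lemma \ref{grh}) for the square moduli $q^2$ with $17 \leq q \leq 97$ in place of Rumely's older computations. Accordingly, the starting point will be the explicit formula expressing
\[
\psi(x;q^2,l) - \frac{x}{\varphi(q^2)} = -\frac{1}{\varphi(q^2)}\sum_{\chi \bmod q^2}\bar\chi(l)\sum_\rho \frac{x^\rho}{\rho} + (\text{small terms}),
\]
where the inner sum runs over the nontrivial zeros of the Dirichlet $L$-function $L(s,\chi)$, together with a standard lower-order contribution from trivial zeros and the functional equation. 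I would then relate $\theta$ to $\psi$ in the usual way, absorbing the prime-power contribution into a term of size $O(\sqrt{x})$ which for $x > 10^{10}$ is negligible relative to the target bound $\epsilon(q^2,10^{10}) x/\varphi(q^2)$.

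The core work is to bound the double sum over characters and zeros. Following Ramaré--Rumely, I would split the zero sum at the GRH-verified height $H=1000$. For zeros with $|\mathrm{Im}(\rho)| \leq H$, Lemma \ref{grh} gives $\mathrm{Re}(\rho)=1/2$, so each term contributes at most $x^{1/2}/|\rho|$; these are then summed using Platt's explicit zero list (or, more parsimoniously, using explicit bounds on $\sum_\rho 1/|\rho|$ for zeros up to height $H$ derived from the Riemann--von Mangoldt type counting formula for $L(s,\chi)$). For zeros with $|\mathrm{Im}(\rho)| > H$, I would apply the standard unconditional machinery: a smoothed explicit formula (as in Section 4 of \cite{ramarerumely}) together with explicit zero-density estimates and a classical zero-free region, chosen so that the smoothing parameter is tuned to $x_0 = 10^{10}$ and $H=1000$. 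The final $\epsilon(q^2,10^{10})$ is then just the numerical value obtained by optimizing the smoothing parameter against these two contributions, computed separately for each of the moduli $q^2$ listed in Table~\ref{tab:new_eq}.

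The main obstacle is not the analytic structure, which is entirely classical, but rather the careful bookkeeping of constants: one must ensure that the smoothing error, the tail-of-zeros bound, the conversion $\psi \to \theta$, and the finite-height contribution are all bounded sharply enough that the resulting $\epsilon(q^2,10^{10})$ is small enough to be useful downstream (for feeding into the upper bound on $\sum_q \sum_p \log p$ arising in the proof of Theorem~\ref{erdos}). In practice, this is a computational matter: for each of the finitely many moduli $q^2 \in \{17^2,19^2,\ldots,97^2\}$, one runs Platt's verified zero list through the explicit formula and optimizes the smoothing parameter numerically, so the lemma is ultimately justified by a computation rather than a closed-form estimate. I would expect the proof in the paper to consist essentially of a pointer to \cite{Platt2013} and \cite{ramarerumely} together with a statement of the resulting table, rather than a re-derivation of the analytic identities.
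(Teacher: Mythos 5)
Your proposal is correct and takes essentially the same route as the paper: the actual proof simply invokes Theorem 5.1.1 of Ramar\'e--Rumely with $H_\chi=1000$ (justified by Platt's GRH verification in Lemma~\ref{grh}), together with specific choices of the smoothing parameter $m$, $\delta=2e/H_\chi$, and the auxiliary quantities $\widetilde{A}_\chi$, $\widetilde{E}_\chi$ from that paper's Lemmas 4.2.1 and 4.1.2, and for $q\in\{3,5,7,11,13\}$ it just reads the values off Ramar\'e--Rumely's own Table~1. You correctly anticipated in your final paragraph that the proof would be a citation plus parameter bookkeeping rather than a re-derivation of the explicit formula machinery.
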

\begin{proof}
We refer to Ramar\'{e} and Rumely \cite{ramarerumely}. The values for $q\in\{3,5,7,11,13\}$ are from Table 1 of that paper. For the other entries, we use Theorem 5.1.1 with $H_\chi=1000$ and $C_1(\chi,H_\chi)=9.14$ (see display 4.2). We set $m=10$ for $q\leq 23$, $m=12$ for $q\geq 47$ and $m=11$ otherwise. We use $\delta=2e/H_\chi$ and for $\widetilde{A}_\chi$ we use the upper bound of Lemma 4.2.1. Finally, for $\widetilde{E}_\chi$ we rely on Lemma 4.1.2 and we note that $2\cdot 9.645908801\cdot\log^2(1000/9.14)\geq \log 10^{10}$ as required.
\end{proof}

\begin{table}[h!] 
\caption{Values for $\epsilon(q^2,10^{10})$.} 
\label{tab:new_eq} 
\centering 
\begin{tabular}{| c c | c c | c c | c c |} 
 \hline 
$q$ & $\epsilon(q^2,10^{10})$ & $q$ & $\epsilon(q^2,10^{10})$ & $q$ & $\epsilon(q^2,10^{10})$ & $q$ & $\epsilon(q^2,10^{10})$  \\[0.5 ex] \hline
$3$ & $0.003228$ & $19$ & $0.17641$ & $43$ & $0.95757$ & $71$ & $2.82639$\\
$5$ & $0.012214$ & $23$ & $0.25779$ & $47$ & $1.15923$ & $73$ & $3.00162$\\
$7$ & $0.017015$ & $29$ & $0.41474$ & $53$ & $1.50179$ & $79$ & $3.56158$\\
$11$ & $0.031939$ & $31$ & $0.47695$ & $59$ & $1.89334$ & $83$ & $3.96363$\\
$13$ & $0.042497$ & $37$ & $0.69397$ & $61$ & $2.03488$ & $89$ & $4.61023$\\
$17$ & $0.14271$ & $41$ & $0.86446$ & $67$ & $2.49293$ & $97$ & $5.55434$\\
\hline 
\end{tabular} 
\end{table} 

\begin{lemma}
We have 
$$\omega\left(3^2,10^{10}\right)=1.109042,$$ 
$$\omega\left(5^2,10^{10}\right)=0.821891,$$ 
$$\omega\left(7^2,10^{10}\right)=0.744132,$$
$$\omega\left(11^2,10^{10}\right)=0.711433$$ 
and 
$$\omega\left(13^2,10^{10}\right)=0.718525.$$ 
If $q$ is a prime such that $17 \leq q \leq 97$ we have
\begin{equation*}
\omega(q^2,10^{10})=\frac{\log 7 - \frac{7}{\varphi(q^2)}}{\sqrt{7}}.
\end{equation*}
\end{lemma}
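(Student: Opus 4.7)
By the definition implicit in the Ramar\'e--Rumely framework, $\omega(q^2, 10^{10})$ is the least constant $W$ such that $|\theta(x; q^2, l) - x/\varphi(q^2)| \leq W\sqrt{x}$ holds for every $x \leq 10^{10}$ and every residue $l$ coprime to $q^2$. The plan is to exhibit this supremum for each listed $q$, splitting the analysis according to the size of $q$.

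For $q \in \{3, 5, 7, 11, 13\}$, the approach is direct computation, mirroring the small-modulus entries of Table 1 in Ramar\'e--Rumely. Since $\theta(x; q^2, l)$ is a step function jumping only at primes, one sieves the primes $p \leq 10^{10}$, bins each by its residue class modulo $q^2$, and tracks the ratio $|\theta(x; q^2, l) - x/\varphi(q^2)|/\sqrt{x}$ at every breakpoint (both immediately before and immediately after each jump, across all residue classes). The recorded maximum yields the stated constants.

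For $17 \leq q \leq 97$, the supremum should be attained at $x = 7$, $l \equiv 7 \pmod{q^2}$. Indeed, since $7 < q^2$, the prime $7$ is the only prime $\leq 7$ in that class, so $\theta(7; q^2, 7) = \log 7$ and the discrepancy is $\log 7 - 7/\varphi(q^2) > 0$. The structural reason this is extremal is that $y \mapsto \log y/\sqrt{y}$ peaks over primes at $p = 7$ (its continuous maximum is at $y = e^2$), so the single-prime positive jump just after any first appearance in a class is largest at $p = 7$. To certify the claim one must also rule out larger ratios from: (a) the remaining very small $x$, handled by direct enumeration at $x = 2, 3, 5$ and neighbourhoods; (b) a class $l$ remaining empty up to moderate $x$, where the ratio is $\sqrt{x}/\varphi(q^2)$ and stays below the target up to a computable threshold since $\varphi(q^2) \geq 272$; and (c) the bulk range up to $10^{10}$.

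The main obstacle is range (c): the bound $(\log 7 - 7/\varphi(q^2))/\sqrt{7} \approx 0.735$ is extraordinarily tight, and Platt's GRH verification to height $T = 1000$ (Lemma \ref{grh}) by itself cannot close the gap at $x = 10^{10}$, since the truncated explicit formula for $\psi(x; \chi)$ contributes an error of order $x \log^2 x/T$ which swamps $\sqrt{x}$. The remedy is to extend the same computational sweep to all nineteen moduli $q^2$ with $17 \leq q \leq 97$: a single sieve of the primes $p \leq 10^{10}$, distributed by residue class for each $q^2$, and checked at every breakpoint. Platt's GRH data then plays a supporting role in ruling out pathologies in any subranges not captured directly. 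The recorded maximum in each case is shown to equal $(\log 7 - 7/\varphi(q^2))/\sqrt{7}$, completing the proof.
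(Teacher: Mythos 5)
Your proposal is essentially in the right spirit, but it diverges from the paper's actual (very terse) proof on an important point of sourcing. The paper does \emph{not} recompute $\omega$ for the moduli $3^2,5^2,7^2,11^2,13^2$: those five values are simply read off from Table~2 of Ramar\'e--Rumely, with the paper noting that the table entry for $5^2$ requires a slight correction. You propose to redo the full sieve-and-bin computation up to $10^{10}$ for these moduli, which would certainly work (and would sidestep the $5^2$ issue), but it is a different and heavier route than what the authors actually take; the presence of a cited ``correction'' is a strong signal that the values were inherited rather than regenerated. For the larger moduli $17 \leq q \leq 97$, the paper says only that ``a short computation shows that the maximum occurs\ldots when $x=7$ and $a=7$,'' and your reading of that as a direct sieve over $p \leq 10^{10}$ distributed by residue class is a reasonable reconstruction. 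Your structural heuristic --- that $\log p/\sqrt{p}$ is maximised over primes at $p=7$, so the initial jump in the class $l\equiv 7$ is the extremal event --- is a genuinely helpful observation that the paper does not spell out, and it explains why the closed-form $(\log 7 - 7/\varphi(q^2))/\sqrt{7}$ appears for every $q$ in this range. You are also right to flag that a naive truncated explicit formula with GRH verified only to height $1000$ cannot on its own certify a constant below $1$ at $x$ near $10^{10}$; both you and the paper ultimately fall back on direct computation for this reason. In short: same endpoint, largely the same computational mechanism for $q\geq 17$, but you rebuild what the paper cites for $q\leq 13$.
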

\begin{proof}
The results for $\{3^2,5^2,7^2,11^2,13^2\}$ are from Table 2 of \cite{ramarerumely} with a slight correction to the entry for $5^2$. A short computation shows that the maximum occurs for all of the other $q$ when $x=7$ and $a=7$.
\end{proof}

\begin{lemma} \label{apbounds}
Let $T=\sqrt{2.5\cdot 10^{14}}$. Then for $x\geq T$ and $q\leq 97$ an odd prime we have
$$\bigg| \theta(x;q^2,l) - \frac{x}{\varphi(q^2)} \bigg| < \epsilon\left(q^2,T\right) \frac{x}{\varphi(q^2)},$$
where the values of $\epsilon\left(q^2,T\right)$ are given in Table \ref{table1}.
\end{lemma}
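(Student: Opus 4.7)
The plan is to splice together the two bounds already in hand: the relative bound $\epsilon(q^2,10^{10})$ valid for $x>10^{10}$, and the absolute bound $\omega(q^2,10^{10})\sqrt{x}$ which, by hypothesis of the Ramar\'{e}--Rumely framework, is valid on the full range up to $x=10^{10}$. Since $T=\sqrt{2.5\cdot10^{14}}\approx 1.58\cdot 10^{7}$ lies well below $10^{10}$, the range $x\geq T$ splits naturally into $[T,10^{10}]$ and $[10^{10},\infty)$, and it suffices to produce a relative error bound on each piece and take the maximum.

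On the upper piece $x\geq 10^{10}$ the previous lemma already delivers the desired inequality with constant $\epsilon(q^2,10^{10})$, so no further work is needed there. On the lower piece $T\leq x\leq 10^{10}$ I would start from
\begin{equation*}
\bigg|\theta(x;q^2,l)-\frac{x}{\varphi(q^2)}\bigg|<\omega(q^2,10^{10})\sqrt{x}
\end{equation*}
and rewrite the right-hand side as $\omega(q^2,10^{10})\,\varphi(q^2)x^{-1/2}\cdot(x/\varphi(q^2))$. The factor in front of $x/\varphi(q^2)$ is a decreasing function of $x$, so its maximum on $[T,10^{10}]$ is attained at $x=T$, giving the relative constant $\omega(q^2,10^{10})\varphi(q^2)/\sqrt{T}$.

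Combining the two pieces, the value to record in the table is simply
\begin{equation*}
\epsilon(q^2,T)=\max\!\left\{\epsilon(q^2,10^{10}),\ \frac{\omega(q^2,10^{10})\,\varphi(q^2)}{\sqrt{T}}\right\}
\end{equation*}
for each odd prime $q\leq 97$. The values of $\varphi(q^2)=q(q-1)$ are explicit, and both $\epsilon(q^2,10^{10})$ and $\omega(q^2,10^{10})$ were tabulated in the two preceding lemmas, so filling in Table~\ref{table1} is a short arithmetic exercise rather than a theoretical one.

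There is no genuine obstacle to this argument; the only mild subtlety is verifying that the two input lemmas really cover the entire interval $[T,\infty)$ between them, in particular that the $\omega$-bound indeed holds down to $x=T$ (it does, since the Ramar\'{e}--Rumely $\omega$-estimates are valid on $[1,10^{10}]$ and $T$ sits comfortably inside). If any individual $q$ yields an unattractively large value of $\epsilon(q^2,T)$, the lower threshold $T$ could be raised slightly to tighten the $\varphi(q^2)/\sqrt{T}$ term, but this is a tuning issue rather than a mathematical one.
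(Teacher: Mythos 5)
Your argument is correct and matches the paper's proof exactly: both split the range at $10^{10}$, apply the relative $\epsilon$-bound above and the absolute $\omega$-bound below, convert the latter to a relative bound by noting $\sqrt{x}/\!\left(x/\varphi(q^2)\right)=\varphi(q^2)/\sqrt{x}$ is decreasing (hence maximized at $x=T$), and take $\epsilon(q^2,T)=\max\!\left(\epsilon(q^2,10^{10}),\,\omega(q^2,10^{10})\varphi(q^2)/\sqrt{T}\right)$. If anything, your write-up makes the intermediate step more transparent than the paper, which states the $\omega$-bound only at $x=T$ before passing to the whole interval.
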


\begin{proof}
Using $\omega(q^2,10^{10})$ we have
\begin{equation*}
\bigg| \theta(T;q^2,l) - \frac{T}{\varphi(q^2)} \bigg| < \omega\left(q^2,10^{10}\right)\sqrt{T}
\end{equation*}
so for $x\in[T,10^{10}]$ we have
\begin{equation*}
\bigg| \theta(x;q^2,l) - \frac{x}{\varphi(q^2)} \bigg| < \frac{\omega\left(q^2,10^{10}\right)\varphi(q^2)}{\sqrt{T}}\frac{x}{\varphi(q^2)}
\end{equation*}
and so we can take 
\begin{equation*}
\epsilon\left(q^2,T\right)=\max\left(\epsilon\left(q^2,10^{10}\right),\frac{\omega\left(q^2,10^{10}\right)\varphi(q^2)}{\sqrt{T}}\right).
\end{equation*}
\end{proof}

\begin{table}[h!] 
\caption{Values for $\epsilon(q^2,T)$ for Lemma \ref{apbounds}.} 
\label{table1} 
\centering 
\begin{tabular}{c c c c c c c c } 
 \hline\hline 
$q$ & $\epsilon(q^2,T)$ & $q$ & $\epsilon(q^2,T)$ & $q$ & $\epsilon(q^2,T)$ & $q$ & $\epsilon(q^2,T)$  \\[0.5 ex] \hline
$3$ & $0.00323$ & $19$ & $0.17641$ & $43$ & $0.95757$ & $71$ & $2.82639$\\
$5$ & $0.01222$ & $23$ & $0.25779$ & $47$ & $1.15923$ & $73$ & $3.00162$\\
$7$ & $0.01702$ & $29$ & $0.41474$ & $53$ & $1.50179$ & $79$ & $3.56158$\\
$11$ & $0.03194$ & $31$ & $0.47695$ & $59$ & $1.89334$ & $83$ & $3.96363$\\
$13$ & $0.04250$ & $37$ & $0.69397$ & $61$ & $2.03488$ & $89$ & $4.61023$\\
$17$ & $0.14271$ & $41$ & $0.86446$ & $67$ & $2.49293$ & $97$ & $5.55434$\\
\hline\hline 
\end{tabular} 
\end{table} 

The reader should note that the above sharpening of Ramar\'{e} and Rumely's results could be extended so as to include all moduli in the given range and not just the squares. This would then allow one to ease the computation involved in proving Theorem \ref{estermann} in the previous section.

Now, let $n \geq 2.5\cdot 10^{14}$ be such that $n \not\equiv 1 \text{ mod } 4$ and consider the case where $q$ is an odd prime such that $q \leq 97$. We want to bound from above the number of primes $p < \sqrt{n}$ satisfying
\begin{equation} \label{congruence}
n \equiv p^2 \text{ mod } q^2.
\end{equation}
Clearly, $p$ can belong to at most two arithmetic progressions moduluo $q^2$. Therefore, by Lemma \ref{apbounds},  we can estimate the weighted count of such primes as follows.
$$\sum_{\substack{p < \sqrt{n} \\ n \equiv p^2 \text{ mod } q^2}} \log p \leq  \theta(\sqrt{n}; q^2, l) + \theta(\sqrt{n}; q^2, l')  < \frac{2 (1+\epsilon(q^2,T))}{q (q-1)} \sqrt{n}$$
where $l$ and $l'$ are the possible congruence classes for $p$ and $\epsilon(q^2,T)$ is given in Table \ref{table1}. Summing this over all $24$ values of $q$ gives us the contribution
\begin{equation} \label{cont1}
\sum_{q \in \{3,\ldots,97\}} \sum_{\substack{p < \sqrt{n} \\ n \equiv p^2 \text{ mod } q^2}} \log p <0.568 \sqrt{n}.
\end{equation}

We now consider the case where $97 < q \leq n^{c}$ and $c \in (0,1/4)$ is to be chosen later to achieve an optimal result. As in the previous section, the Brun--Titchmarsh Theorem gives us that
$$\pi(x;q,a) \leq \frac{2 x}{\varphi(q) \log(x/q)}$$
for all $x>q$. Trivially, one has that
$$\theta(\sqrt{n}; q^2,a) \leq \frac{\sqrt{n}}{q(q-1)} \frac{ \log n}{\log(\sqrt{n}/q^2)}.$$
As $q < n^{c}$, it follows that
\begin{equation} \label{cont1point5}
\sum_{97 < q \leq n^{c}} \sum_{\substack{p < \sqrt{n} \\ n \equiv p^2 \text{ mod } q^2}} \log p < \frac{\sqrt{n}}{\frac{1}{4}- c}  \sum_{97 < q \leq n^{c}} \frac{1}{q(q-1)}.
\end{equation}
We bound the sum as follows:
\begin{eqnarray*}
\sum_{97 < q \leq n^{c}} \frac{1}{q(q-1)} & < & \sum_{97 < q < 1000001} \frac{1}{q (q-1)} + \sum_{ n \geq 1000001} \frac{1}{n(n-1)} \\
& = & \sum_{97 < q <1000001} \frac{1}{q (q-1)} + \frac{1}{1000000} <0.00183.
\end{eqnarray*}
Substituting this into (\ref{cont1point5}) gives us that
\begin{equation} \label{cont2}
\sum_{97 < q \leq n^{c}} \sum_{\substack{p < \sqrt{n} \\ n \equiv p^2 \text{ mod } q^2}} \log p < \frac{0.00183 \sqrt{n}}{\frac{1}{4}-c}.
\end{equation}

Now, let $q$ be an odd prime such that $n^{c} < q < A \sqrt{n}$, where $A \in (0,1)$ is to be chosen later for optimisation. Since there are at most two possible residue classes modulo $q^2$ for $p$, the number of primes $p$ such that $n \equiv p^2 \text{ mod } q^2$ is trivially less than
$$2\bigg( \frac{\sqrt{n}}{q^2} + 1\bigg).$$
Clearly, using our logarithmic weights one has that
\begin{equation*}
\sum_{\substack{p < \sqrt{n} \\ n \equiv p^2 \text{ mod } q^2}} \log p <  \bigg( \frac{\sqrt{n}}{q^2} + 1\bigg) \log n
\end{equation*}
and so
\begin{eqnarray*}
\sum_{n^{c} < q < A \sqrt{n}} \sum_{\substack{p < \sqrt{n} \\ n \equiv p^2 \text{ mod } q^2}} \log p < \sqrt{n} \log n \sum_{m > n^{c}} \frac{1}{m^2} + \pi(A \sqrt{n}) \log(n)
\end{eqnarray*}
where $\pi(x)$ denotes the number of primes not exceeding $x$. The sum can be estimated in a straightforward way by 
$$\sum_{m > n^{c}} \frac{1}{m^2} < \frac{1}{n^{2c}} + \int_{n^{c}}^{\infty} \frac{1}{t^2} dt = \frac{1}{n^{2c}}+\frac{1}{n^c}$$
and by Theorem 6.9 of Dusart \cite{dusart}, which gives us that
$$\pi(A \sqrt{n}) < \frac{A \sqrt{n}}{\log(A \sqrt{n})}\bigg(1+\frac{1.2762}{\log (A \sqrt{n})}\bigg).$$
Note that, for the eventual choice of $A$ and our range of $n$, we will have $A \sqrt{n}>1$ and so this upper bound is valid. Therefore, putting this all together we have
\begin{equation} \label{cont3}
\sum_{n^c<q<A \sqrt{n}} \sum_{\substack{p < \sqrt{n} \\ n \equiv p^2 \text{ mod } q^2}} \log p < \sqrt{n}(n^{-2c}+n^{-c})\log n + \frac{A \sqrt{n} \log n}{\log(A \sqrt{n})}\bigg(1+\frac{1.2762}{\log (A \sqrt{n})}\bigg).
\end{equation}

Finally, we consider the range $A \sqrt{n} \leq q < \sqrt{n}$. If $n-p^2$ is divisible by $q^2$, then
\begin{equation} \label{blah}
n = p^2 + B q^2
\end{equation}
for some positive integer $B < A^{-2}$. We will need some preliminary results here. First, it is known by the theory of quadratic forms (see Davenport \cite[Ch. 6]{davenport}) that the equation
$$ax^2+by^2=n,$$
where $a, b$ and $n$ are given positive integers, has at most $s 2^{\omega(n)}$ proper solutions, that is, solutions with $\gcd(x,y)=1$. Note that $s$ denotes the number of automorphs of the above form and $\omega(n)$ denotes the number of different prime factors of $n$. The number of automorphs is directly related to the discriminant of the form; specifically, $s=4$ for the case $B=1$ and $s=2$ for $B>1$. Moreover, we are only interested in the case where $x$ and $y$ are both positive, and so it follows that (\ref{blah}) has at most $s 2^{\omega(n)-2}$ proper solutions. Finally, noting that there will be at most 1 improper solution to (\ref{blah}), namely $p=q$, we can bound the overall number of solutions to (\ref{blah}) by $s 2^{\omega(n)-2}+1$.

Furthermore, Theorem 11 of Robin \cite{robin}  gives us the explicit bound
\begin{equation} \label{robinbound}
\omega(n) \leq 1.3841 \frac{\log n}{\log \log n} 
\end{equation}
for all $n \geq 3$. Robin also gives (Theorem 12) the stronger bound
$$\omega(n) \leq  \frac{\log n}{\log \log n} + 1.45743 \frac{\log n}{(\log \log n)^2} $$
for all $n \geq 3$, which allows one to replace the value $1.3841$ in (\ref{robinbound}) with 1.1189 for all $n \geq 2.5 \cdot 10^{14}$. It turns out that $1.3841$ will be good enough for our proof, however one should note that using the constant $1.1189$ would ease the computation slightly by allowing us to prove Theorem \ref{erdos} for a slightly larger range. This improvement, however, was suggested to the author by Dr. Trudgian after the later computation had been done, and so we do not need to implement this. It would be a useful gambit to keep in mind for similar problems.

Thus, for fixed $n$ and $B$, it is easy to bound explicitly from above the number of solutions to (\ref{blah}). It remains to sum this bound over all valid values of $B$. However, we should note that given an integer $n$, there are not too many good choices of $B$, and this will allow us to make a further saving.

This comes from the observation that every prime $p >3$ satisfies $p^2 \equiv 1 \text{ mod } 24$. For with $p>3$ and $q>3$, it follows that (\ref{blah}) becomes
$$B \equiv n - 1 \text{ mod } 24,$$
and this confines $B$ to the integers in a single residue class modulo 24. 

Formally and explicitly, we argue as follows. Consider first the case where $B$ is an integer in the range
$$\frac{n-9}{A^2 n} \leq B < \frac{1}{A^2}.$$
The leftmost inequality above keeps $p \leq 3$. Here, there are clearly at most 
$$\frac{9}{A^2 n} + 1$$
integer values for $B$. We now consider the case where $p>3$, and it follows that $B \equiv n - 1 \text{ mod } 24$. Clearly, then, there are at most 
$$\frac{1}{24 A^2} + 1$$ values for $B$ in this range. Therefore, in total, there are at most 
$$2 + \frac{1}{24 A^2} + \frac{9}{A^2 n}$$
values of $B$ for which we need to sum the solution counts to (\ref{blah}). Also, we must also consider that $w=4$ for $B=1$. Therefore, we have that the number of solutions to (\ref{blah}) summed over $B$ is bounded above by
$$2^{\omega(n)-1}\bigg(3 + \frac{1}{24 A^2} + \frac{9}{A^2 n}\bigg).$$
Therefore, the number of primes $p$, each counted with weight $\log p$, which satisfy (\ref{blah}) is at most
\begin{equation} \label{cont4}
\sum_{A \sqrt{n} \leq q < \sqrt{n}} \sum_{\substack{p < \sqrt{n} \\ n \equiv p^2 \text{ mod } q^2}} \log p <  2^{1.3841 \log n / \log \log n}\bigg(\frac{3}{2} + \frac{1}{48 A^2} + \frac{9}{2A^2 n}\bigg) \log n.
\end{equation}

Now, collecting together (\ref{cont1}), (\ref{cont2}), (\ref{cont3}) and (\ref{cont4}), we have that the weighted count over all the so-called mischevious primes can be bounded hideously thus
\begin{eqnarray*}
\sum_{q < \sqrt{n}} \sum_{\substack{p < \sqrt{n} \\ n \equiv p^2 \text{ mod } q^2}} \log p & < & \bigg(0.568+\frac{0.00183}{\frac{1}{4}-c}+ (n^{-2c}+n^{-c})\log n \bigg) \sqrt{n} \\
& &+ \;\frac{A \sqrt{n} \log n}{\log(A \sqrt{n})}\bigg(1+\frac{1.2762}{\log (A \sqrt{n})}\bigg) \\
& &+ \;2^{1.3841 \log n / \log \log n}\bigg(\frac{3}{2} + \frac{1}{48 A^2} + \frac{9}{2A^2 n}\bigg) \log n.
\end{eqnarray*}
As expected, however, the weighted count over all primes exceeds this for large enough $n$ and good choices of $c$ and $A$. Dusart \cite{dusart} gives us that 
$$\theta(x) \geq x - 0.2 \frac{x}{\log^2 x}$$
for all $x \geq 3 594 641$, and thus it follows that
$$\theta(\sqrt{n}) \geq \sqrt{n} - 0.8 \frac{\sqrt{n}}{\log^2 n}$$
for all $n \geq 10^{14}$. There are sharper bounds that one could use here (see Trudgian \cite{trudgianpomerance} and Faber--Kadiri \cite{faberkadiri} for modern results), however Dusart's result is both easily stated and suitable for the application.

 Therefore, if we denote by $R(n)$ the (weighted) count of primes $p$ such that $n-p^2$ is square-free, it follows that
\begin{eqnarray*}
R(n) & > & \bigg(1-0.568-\frac{0.00183}{\frac{1}{4}-c}-\frac{0.8}{\log^2 n} -(n^{-2c}+n^{-c})\log n\bigg) \sqrt{n}\\
& - &\frac{A \sqrt{n} \log n}{\log(A \sqrt{n})}\bigg(1+\frac{1.2762}{\log (A \sqrt{n})}\bigg) \\
& - & 2^{1.3841 \log n / \log \log n}\bigg(\frac{3}{2} + \frac{1}{48 A^2} + \frac{9}{2A^2 n}\bigg) \log n.
\end{eqnarray*}
It is now straightforward to check that choosing $c=0.209$ and $A=0.0685$ gives $R(n)>0$ for all $n \geq 2.5 \times 10^{14}$, that is, Theorem \ref{erdos} is true for this range of integers.

We will now describe Platt's computation to confirm that all integers $n$ satisfying $10\leq n \leq 4\,000\,023\,301\,851\,135$ and $n\not\equiv 1 \mod 4$ can be written as the sum of the square of a prime and a square-free number. This is actually much further than we needed to check, but we did not expect our analytic approach to fare as well as it did. We will first describe the algorithm used, and then say a few words about its implementation.

We aim to test $3\cdot 10^{15}$ different values of $n$. We quickly conclude that we cannot afford to individually test candidate numbers of the type $n-p^2$ to see if they are square-free. We proceed instead by chosing a largest prime $P$ and a sieve width $W$. To check all the integers in $[N,N+W)$ we first sieve all the integers in $[N-P^2,N+W-4)$ by crossing out any that are divisible by a prime square $p^2$ with $p<\sqrt{(N+W-5)/2}$. Now, for each $n\in[N,N+W)$ such that $n\not\equiv 1 \mod 4$, we look in our sieve to see if $n-4$ is square-free\footnote{Unless $n\equiv 0 \mod 4$.}. If not, we try $n-9$, then $n-25$, and so on until $n-p^2$ is square-free. If it fails all these tests up to and including $n-P^2$, we output $n$ for later checking.

Numbers of this size fit comfortably in the $64$ bit native word size of modern CPUs and we implemented the algorithm in C++. We use a character array for the sieve and chose a sieve width $W=2^{31}$ as this allows us to run $16$ such sieves in parallel in the memory available. We set the prime limit $P=43$ as this was found to reduce the number of failures to a manageable level (see below). To generate the primes used to sieve the character array, we used Kim Walisch's \textit{primesieve} \cite{Walisch2012}.

We were able to run $16$ threads on a node of the University of Bristol's Bluecrystal cluster \cite{ACRC2014} and in total we required $5,400$ core hours of CPU time to check all $n\in[2048,4\,000\,023\,301\,851\,135]$. Here, $4\,915$ values of $n$ were rejected as none of $n-p^2$ with $p\leq 43$ were square-free. We checked these $4\,915$ cases in seconds using PARI \cite{Batut2000} and found that $p=47$ eliminated $4\,290$ of them, $p=53$ does this for a further $538$, $p=59$ for $14$ more, $p=61$ for $61$ more values of these $n$, $p=67$ does not help, $p=71$ knocks off $11$ more and the last one standing, $n=1\,623\,364\,493\,706\,484$ falls away with $p=73$. Finally, we use PARI again to check $n\in[10,2047]$ with $n\not\equiv 1 \mod 4$ and we are done.

It is interesting to consider the efficiency of the main part of this algorithm. The CPUs on the compute nodes of Phase III are $2.6$GHz Intel\textsuperscript{\textregistered} Xeon\textsuperscript{\textregistered} processors and we checked $3\cdot 10^{15}$ individual $n$ in $5\,400$ hours. This averages less than $17$ clock ticks per value of $n$ which suggests that the implementation must have made good use of cache. 

This computation completes the proof of Theorem \ref{erdos}. We note again that, for the interested reader, there are plenty more open problems of this flavour that may yield to similar techniques. We provide a collection of these in Chapter~6.


\chapter{Solving a Curious Inequality of Ramanujan}
\label{chapter5}

\begin{quote}
\textit{``Curiouser and curiouser!'' cried Alice (she was so much surprised that for the moment she quite forgot how to speak good English).}
\end{quote}

In this chapter, we detail another joint investigation, made by the author and Platt, on $\pi(x)$, the number of primes which are less than or equal to $x$. The work of this chapter has now been published \cite{dudekplattem} in \textit{Experimental Mathematics}.

In one of his notebooks, Ramanujan (see the preservations by Berndt \cite[Ch. 24]{berndt}) proved that the inequality
\begin{equation} \label{inequality}
\pi(x)^2 < \frac{e x}{\log x} \pi\Big(\frac{x}{e}\Big)
\end{equation}
holds for all sufficiently large values of $x$. Berndt \cite{berndt} states that Wheeler, Keiper and Galway used \textsc{Mathematica} in an attempt to determine an $x_0$ such that (\ref{inequality}) holds for all $x \geq x_0$. They were unsuccessful, but independently Galway was able to establish that the largest prime counterexample below $10^{11}$ occurs at $x = 38, 358, 837, 677$. 

Hassani \cite{Hassani} looked at the problem in $2012$ and established \textit{inter alia} the following theorem.
\begin{theorem}[Hassani]\label{theorem:hassani}
Assuming the Riemann Hypothesis, the inequality
$$\pi(x)^2 < \frac{e x}{\log x} \pi\Big(\frac{x}{e}\Big)$$
holds for all $x\geq 138,766,146,692,471,228$.
\end{theorem}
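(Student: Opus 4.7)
The plan is to use Schoenfeld's conditional explicit bound under the Riemann hypothesis,
$$|\pi(y) - \text{Li}(y)| < \frac{\sqrt{y}\,\log y}{8\pi}\qquad (y \geq 2657),$$
to reduce the inequality of interest to a comparison of the smooth quantities $\text{Li}(x)^2$ and $(ex/\log x)\,\text{Li}(x/e)$, modulo a manageable error. First I would write $\pi(y) = \text{Li}(y) + E(y)$ with $|E(y)|$ bounded by the Schoenfeld estimate, and then expand
$$\pi(x)^2 = \text{Li}(x)^2 + 2\,\text{Li}(x)\,E(x) + E(x)^2, \qquad \frac{ex}{\log x}\pi(x/e) = \frac{ex}{\log x}\text{Li}(x/e) + \frac{ex}{\log x}E(x/e).$$
It then suffices to show that the \emph{main-term gap}
$$G(x) := \frac{ex}{\log x}\text{Li}(x/e) - \text{Li}(x)^2$$
exceeds the combined error
$$\mathcal{E}(x) := 2\,\text{Li}(x)\,|E(x)| + E(x)^2 + \frac{ex}{\log x}|E(x/e)|,$$
whose explicit size is $\bigl(\tfrac{1}{4\pi} + \tfrac{\sqrt{e}}{8\pi}\bigr) x^{3/2}$ to leading order.

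Next I would estimate $G(x)$ by expanding the logarithmic integral via the standard integration-by-parts identity
$$\text{Li}(x) = \sum_{k=0}^{N} \frac{k!\,x}{\log^{k+1} x} + (N+1)! \int_{2}^{x}\frac{dt}{\log^{N+2} t}.$$
Inserting this into both $\text{Li}(x)^2$ and $(ex/\log x)\,\text{Li}(x/e)$ and matching coefficients, a short bookkeeping exercise shows that the coefficient of $x^2/\log^{m} x$ in the two sides agrees for all $m \leq 5$, and the first surviving term in $G(x)$ arises at $m = 6$:
$$G(x) = \frac{x^2}{\log^6 x} + \frac{14\,x^2}{\log^7 x} + \cdots,$$
where the higher-order terms can be made rigorous by truncating the asymptotic series at an optimal depth ($N \asymp \log x$) and controlling the tail with the explicit remainder above. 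In this way I would produce a genuinely explicit two-sided bound of the form $G(x) \geq (1 - \delta(x))\,x^2/\log^6 x$ on a suitable range.

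Finally, the theorem reduces to an explicit comparison
$$\frac{x^2}{\log^6 x}\bigl(1 + O(1/\log x)\bigr) > \Bigl(\tfrac{1}{4\pi} + \tfrac{\sqrt{e}}{8\pi}\Bigr)\,x^{3/2}\bigl(1 + O(1/\log x)\bigr),$$
that is, to $\sqrt{x} > C\,\log^6 x$ for an explicit $C$, refined by several subleading corrections on both sides. A numerical optimisation, balancing the first two terms of $G(x)$ against a sharpened form of $\mathcal{E}(x)$, yields the stated threshold $x_0 = 138\,766\,146\,692\,471\,228$.

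The main obstacle is precisely the narrow margin between $G(x)$ and $\mathcal{E}(x)$: the gap is only of order $x^2/\log^6 x$ while the conditional error is of order $x^{3/2}$, and at the crossover $x \approx 1.4 \times 10^{17}$ we have $\sqrt{x} \approx 3.7 \times 10^{8}$ and $\log^6 x \approx 3.8 \times 10^{9}$, so the two quantities are genuinely comparable in magnitude. There is essentially no slack, so any crude inequality used along the way will spoil the numerical threshold; the analysis must retain enough terms from the $\text{Li}$ expansion (at least through $x^2/\log^7 x$) and must use Schoenfeld's explicit constant $1/(8\pi)$ rather than any weaker absolute bound. The delicacy of this balance, rather than any conceptual difficulty, is what forces the specific value $138\,766\,146\,692\,471\,228$.
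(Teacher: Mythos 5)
The paper does not prove this theorem; it is quoted verbatim from Hassani \cite{Hassani}, so there is no ``paper's own proof'' to compare against. What I can say is that your reconstruction is sound and, as far as it can be checked against what the paper does prove nearby, consistent with the method. Your decomposition of $\pi(x)^2 - (ex/\log x)\pi(x/e)$ into the main-term gap $G(x)$ and the error budget $\mathcal{E}(x)$ is the natural one, and your coefficient bookkeeping is correct: writing $a_k = (k-1)!$, the squared series has $c_6 = 64$, $c_7 = 312$, while the shifted series yields $d_6 = 65$, $d_7 = 326$, so $G(x) = x^2/\log^6 x + 14\,x^2/\log^7 x + O(x^2/\log^8 x)$ as you claim (the values $c_6=64$, $d_6=65$ also appear explicitly in the paper's derivation of (\ref{makemenegative})). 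Your leading error constant $\tfrac{1}{4\pi} + \tfrac{\sqrt{e}}{8\pi} \approx 0.145$ is also right, and your observation that at $x\approx 1.4\times 10^{17}$ one has $\sqrt{x} \approx 3.7\times 10^8$ against $0.145\log^6 x \approx 5.5\times 10^8$ shows precisely how tight the balance is: the leading terms alone do not quite close the gap at the threshold, so you genuinely need the $14/\log x$ and higher corrections to $G$, together with the $(1 - 1/\log x)$ correction in the $E(x/e)$ piece, to push the crossover down to $1.388\times 10^{17}$. That is a real warning you have correctly flagged, not an incidental one.

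One structural remark that may be useful. For the \emph{conditional completion} of the problem, the paper (Lemma \ref{lem:lowlim}) does something technically easier than a full coefficient-by-coefficient comparison: it defines an explicit majorant $g(x) = \text{li}^2(x) - \tfrac{ex}{\log x}\bigl(\text{li}(x/e) - \tfrac{1}{8\pi}\sqrt{x/e}(\log x - 1)\bigr)$ --- using the Platt--Trudgian verification that $\pi(x) < \text{li}(x)$ on the relevant range so that only the $E(x/e)$ error survives --- shows $g$ is monotonically decreasing, and evaluates $g$ at a single point. This ``monotonicity plus one evaluation'' device sidesteps the delicate truncated asymptotic expansion of $\text{li}$ at depth $N\asymp\log x$ that your plan would need to make rigorous. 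It is worth noting as an alternative route for the final step of your argument: rather than bounding $G(x) - \mathcal{E}(x)$ term by term near the crossover, package it as a single explicit function, establish monotonicity by differentiating, and then check a single value numerically. That keeps the slack you are worried about from being eaten by a chain of truncation inequalities.
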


The first objective of this chapter is to provide an estimate for the inequality (\ref{inequality}) without the condition of the Riemann hypothesis. Using standard estimates on the error term in the Prime Number Theorem, we will prove the following theorem.
\begin{theorem}\label{theorem:uncon}
Without any condition, the inequality
$$\pi(x)^2 < \frac{e x}{\log x} \pi\Big(\frac{x}{e}\Big)$$
holds for all $x\geq\exp(9394)$.
\end{theorem}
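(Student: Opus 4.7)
The strategy is to replace both $\pi(x)$ and $\pi(x/e)$ by the logarithmic integral plus an explicit Prime Number Theorem error term, to extract a positive main term from the difference $\frac{ex}{\log x}\text{Li}(x/e) - \text{Li}(x)^2$, and then to verify that this main term dominates the residual error for $x \geq \exp(9394)$.

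I would first record a Cipolla-type expansion of $\text{Li}$ with an explicit remainder. By iterated integration by parts, for every $K \geq 1$ and $y$ sufficiently large,
$$\text{Li}(y) = \frac{y}{\log y}\sum_{k=0}^{K} \frac{k!}{(\log y)^k} + R_K(y),$$
where $|R_K(y)| \leq C_K \, y / (\log y)^{K+2}$ for an explicit $C_K$. Taking $K=4$, evaluating at $y=x$ and $y=x/e$ (so $\log(x/e) = L-1$ with $L:=\log x$), and expanding each $1/(L-1)^k$ as a geometric series in $1/L$, one obtains after routine algebra
$$\frac{ex}{L}\text{Li}(x/e) - \text{Li}(x)^2 = \frac{x^2}{L^6}\bigl(1 + O(1/L)\bigr).$$
The leading constant $1$ appears because the coefficient of $1/L^4$ in the expansion of $\text{Li}(x)^2/(x^2/L^2)$ equals $\sum_{k=0}^{4} k!(4-k)! = 64$, while the corresponding coefficient in $(ex/L)\text{Li}(x/e)/(x^2/L^2)$ works out to $65$; the lower-order terms agree. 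This positive gap of size $x^2/L^6$ is the engine of the proof.

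Next I would invoke an explicit Korobov--Vinogradov type bound of the shape
$$\bigl|\pi(y) - \text{Li}(y)\bigr| \leq E(y), \qquad E(y) = c_1 \frac{y}{(\log y)^{3/4}} \exp\!\bigl(-c_3\sqrt{\log y}\bigr),$$
valid for $y$ above a modest cutoff (such as the Dusart-type constants $c_1 = 0.2795$, $c_3 = 1/\sqrt{6.455}$). Writing $\pi(x) = \text{Li}(x) + E_1(x)$ and $\pi(x/e) = \text{Li}(x/e) + E_2(x)$ with $|E_i|\leq E(\cdot)$, the inequality to be proved rearranges to
$$\frac{ex}{L}\text{Li}(x/e) - \text{Li}(x)^2 \; > \; 2\,\text{Li}(x)\,E(x) + E(x)^2 + \frac{ex}{L}\,E(x/e).$$
By the computation above, the left side exceeds, say, $\tfrac{1}{2} x^2/L^6$ once $L$ is large. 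The dominant summand on the right is $2\,\text{Li}(x)\,E(x) \ll x^2 L^{-7/4}\exp(-c_3\sqrt{L})$, and the other two are strictly smaller. Hence the whole problem reduces to checking an inequality of the form
$$c'\, L^{17/4}\, \exp\!\bigl(-c_3\sqrt{L}\bigr) \; < \; 1,$$
with $c'$ explicit.

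Finally, since the left-hand side of this last inequality is eventually decreasing in $L$, it suffices to verify it at $L=9394$: taking logarithms turns it into the concrete numerical check $\log c' + \tfrac{17}{4}\log L < c_3\sqrt{L}$, which is straightforward. The main obstacle, apart from keeping $R_K$ and the cross-terms under explicit control, is that the numerical margin at $L=9394$ is thin, so the constants in every auxiliary estimate (the Cipolla remainder, the upper bound on $\text{Li}(x)$ in terms of $x/L$, and the chosen $(c_1,c_3)$ in the PNT error) have to be tracked sharply; a monotonicity argument then extends the conclusion from the threshold to all $x\geq \exp(9394)$.
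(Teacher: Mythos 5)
Your proposal follows essentially the same path as the paper's proof: it identifies the same key cancellation (the five-term asymptotic expansions of $\pi(x)^2$ and $\frac{ex}{\log x}\pi(x/e)$ agree up to the $1/\log^5$ term, leaving a negative gap of exact size $-x^2/\log^6 x$), and it deploys an explicit $\exp(-c\sqrt{\log x})$-type Prime Number Theorem error term to show the gap dominates the residual errors beyond an explicit threshold, concluding with a numerical check at $\log x = 9394$. The paper packages the same idea slightly differently: it expands $\pi$ directly via de la Vall\'ee Poussin's five-term series (which is your Cipolla expansion of $\mathrm{Li}$ at $K=4$), and it routes the error control through the Mossinghoff--Trudgian bound on $|\theta(x)-x| \leq x\sqrt{8/(17\pi)}\,(\log x/R)^{1/4}\exp(-\sqrt{\log x /R})$ with $R=6.315$, converting to $\pi$ via explicit partial summation rather than invoking a $\pi - \mathrm{Li}$ bound off the shelf; it then encapsulates the bookkeeping in a lemma parameterised by $(m_a, M_a, x_a)$, optimising over the free parameter $a=3130$. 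One small correction to your framing: the error shape $\exp(-c_3\sqrt{\log y})$ comes from the classical de la Vall\'ee Poussin zero-free region, not Korobov--Vinogradov (which would give the stronger $\exp(-c(\log y)^{3/5}(\log\log y)^{-1/5})$ and a smaller threshold); the threshold $\exp(9394)$ is calibrated against the de la Vall\'ee Poussin form, so you should use that.
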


We then solve the problem completely on the assumption of the Riemann hypothesis, showing that the counterexample found by Wheeler, Keiper and Galway is the largest.

\begin{theorem}\label{theorem:con}
Assuming the Riemann Hypothesis, the largest integer counterexample to
$$\pi(x)^2 < \frac{e x}{\log x} \pi\Big(\frac{x}{e}\Big)$$
is that at $x=38, 358, 837, 682$.
\end{theorem}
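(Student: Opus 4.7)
The plan is to split the argument into an analytic phase, which establishes the inequality for every $x$ above an explicit threshold $X_0$ under RH, and a computational phase, which verifies that no integer counterexamples exist in $(38\,358\,837\,682,\, X_0]$. A direct numerical check then confirms that $x = 38\,358\,837\,682$ is itself a counterexample, pinning it down as the largest.

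For the analytic phase, I would sharpen the conditional threshold of Theorem~\ref{theorem:hassani}. Under RH, Schoenfeld's bound gives $|\pi(x) - \mathrm{Li}(x)| < \sqrt{x}\,\log x / (8\pi)$ for $x \geq 2657$. Expanding $\mathrm{Li}(x)^2 - (ex/\log x)\,\mathrm{Li}(x/e)$ as a formal series in $u = 1/\log x$, using $\mathrm{Li}(x) \sim (x/\log x)\sum_{k \geq 0} k!\,u^k$, a direct calculation shows that the first several coefficients in the two sides agree and the leading surviving term is asymptotic to $-x^2/\log^6 x$. Writing $\pi = \mathrm{Li} + R$ and controlling the resulting cross terms via Schoenfeld's bound, I would arrive at an inequality of the shape
\[
\pi(x)^2 - \frac{ex}{\log x}\pi(x/e) < -\frac{x^2}{\log^6 x}\bigl(1 + o(1)\bigr) + C x^{3/2},
\]
with an explicit constant $C$. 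The inequality is then guaranteed as soon as $\sqrt{x}$ dominates $C\log^6 x$, producing an effective $X_0$.

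For the computational phase, I would exploit the piecewise constancy of $\pi$. On any maximal interval where both $\pi(x)$ and $\pi(x/e)$ are constant, the right-hand side $(ex/\log x)\pi(x/e)$ is strictly increasing in $x$, so the inequality need only be verified at integer $x$ where one of the two counting functions jumps --- that is, at primes $p$ and at integers of the form $\lceil ep \rceil$. A sieve-based sweep over the range, seeded with Platt's existing rigorous tables for $\pi(x)$, would then handle the verification.

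The principal obstacle is the size of $X_0$. Schoenfeld's bound alone is unlikely to bring $X_0$ anywhere near the reach of direct computation, since the analytic margin $x^2/\log^6 x$ is dwarfed by the $O(x^{3/2})$ error until $x$ is very large. Closing the gap will likely require retaining further terms in the asymptotic expansion of $\mathrm{Li}$, using a sharper RH-conditional estimate for $|\pi(x) - \mathrm{Li}(x)|$ derived from partial sums over low-lying nontrivial zeros via the explicit formula, or else pairing the analytic estimate with additional direct computation of $\pi(x)$ across an intermediate stretch. Balancing these contributions so that $X_0$ is simultaneously provable and computationally tractable is where the real work of the proof lies.
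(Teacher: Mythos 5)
Your high-level decomposition (conditional analytic bound for large $x$, then direct verification down to the counterexample) matches the paper's, and your observation that $f(x)=\pi^2(x)-\frac{ex}{\log x}\pi(x/e)$ is decreasing between jumps of $\pi$, so only the primes need checking, is correct. But the two ideas that actually make the plan work are precisely the ones you flag as unresolved, and neither of your suggested remedies is the one the paper uses.

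First, the analytic threshold. Redoing Hassani's Schoenfeld-based argument will not pull $X_0$ materially below Hassani's $\approx 1.39\cdot 10^{17}$, which is still out of reach. The paper's key move is one-sided: Platt and Trudgian have verified that $\pi(x)<\mathrm{li}(x)$ for all $x\leq 1.39\cdot 10^{17}$, and inside that window this replaces the $O(\sqrt{x}\log x)$ upper fluctuation from Schoenfeld with zero. Bounding $\pi(x)^2$ above by $\mathrm{li}(x)^2$ and $\pi(x/e)$ below via Schoenfeld gives a majorant $g(x)$ that is monotone decreasing for $x\geq 10^{16}$ and already negative at $1.15\cdot 10^{16}$; combined with Hassani's theorem above $1.39\cdot 10^{17}$, this lowers the analytic threshold to $1.15\cdot 10^{16}$. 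This is a different mechanism from sharpening the symmetric RH error term or pushing further along the $\mathrm{Li}$ expansion, which is what you propose.

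Second, the computation. Even with the threshold at $1.15\cdot 10^{16}$, there are roughly $3.2\cdot 10^{14}$ primes in the remaining range $[10^{11},1.15\cdot 10^{16}]$, so evaluating $f$ at every prime is infeasible. The paper instead proves a stepping lemma: if $f(x_0)<0$, then $f(x)<0$ for all $x\in[x_0,x_0+\epsilon]$ where $\epsilon=\sqrt{\pi^2(x_0)-f(x_0)}-\pi(x_0)$. This comes from the trivial bound $\pi(x_0+\epsilon)\leq \pi(x_0)+\epsilon$ and the monotonicity of $x/\log x$, and it lets the verification leap forward by amounts comparable to $\sqrt{-f(x_0)}$ rather than advancing prime by prime. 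Coupled with finer-grained exact tables of $\pi(x_i)$ (re-sieved for this purpose) to bracket $\pi(x)$ and $\pi(x/e)$ at each step, the whole range is covered in about $5.3\cdot 10^8$ steps. Your sketch of a sieve sweep seeded with Platt's tables does not contain this leap argument, and without it the check does not close.

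So: right skeleton, honest recognition of the bottleneck, but the two load-bearing lemmas --- the Platt--Trudgian one-sided bound on $\pi(x)-\mathrm{li}(x)$ and the quadratic stepping lemma --- are both missing, and the alternatives you offer (further terms of the $\mathrm{Li}$ expansion, sharper RH error via low zeros) would not by themselves bring $X_0$ into computable range.
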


We will look at the unconditional result first.

\section{The unconditional result}

We start by giving Ramanujan's original and, we think, rather fetching proof, which is based on de la Vall\'ee Poussin's rendition of the Prime Number Theorem, or more specifically that
\begin{equation} \label{piexpansion}
\pi(x) = x \sum_{k=0}^{4} \frac{k!}{\log^{k+1}x}+O\Big(\frac{x}{\log^6 x}\Big)
\end{equation}
as $x \rightarrow \infty$. As such we have the two estimates
\begin{equation*} \label{pisquared}
\pi^2 (x) = x^2 \Big\{ \frac{1}{\log^2 x} + \frac{2}{\log^3 x} + \frac{5}{\log^4 x} + \frac{16}{\log^5 x} + \frac{64}{\log^6 x} \Big\} + O\Big( \frac{x^2}{\log^7 x}\Big)
\end{equation*}
and
\begin{eqnarray*} \label{pimult}
\frac{e x}{\log x} \pi \Big(\frac{x}{e}\Big) & = & \frac{x^2}{\log x} \Big\{ \sum_{k=0}^{4} \frac{k!}{(\log x-1)^{k+1}} \Big\} + O\Big( \frac{x^2}{\log^7 x} \Big)  \\
& = & x^2 \Big\{ \frac{1}{\log^2 x} + \frac{2}{\log^3 x} + \frac{5}{\log^4 x} + \frac{16}{\log^5 x} + \frac{65}{\log^6 x} \Big\} + O\Big( \frac{x^2}{\log^7 x}\Big).
\end{eqnarray*}
Subtracting the above two expressions gives
\begin{equation} \label{makemenegative}
\pi^2 (x) - \frac{e x}{\log x} \pi \Big(\frac{x}{e}\Big) = - \frac{x^2}{\log^6 x} + O \Big(\frac{x^2}{\log^7 x} \Big)
\end{equation}
which is negative for sufficiently large values of $x$. This completes the proof.

The proof itself should serve as a tribute to the workings of Ramanujan's mind, for surely one would not calculate the asymptotic expansions of such functions without the inkling that doing so would be fruitful. 

Note that if one were to work through the above proof using explicit estimates on the asymptotic expansion of the prime-counting function, then one would be able to make precise what is meant by ``sufficiently large''. The following lemma shows how we do this.

\begin{lemma} \label{lem1}
Let $m_a, M_a \in \mathbb{R}$  and suppose that for $x>x_a$ we have
$$ x \sum_{k=0}^{4} \frac{k!}{\log^{k+1}x}+ \frac{m_a x}{\log^6 x} < \pi(x) < x \sum_{k=0}^{4} \frac{k!}{\log^{k+1}x}+\frac{M_a x}{\log^6 x}.$$
Then Ramanujan's inequality is true if 
$$x > \max( e x_{a},x_{a}' )$$
where a value for $x_{a}'$ can be obtained in the proof and is completely determined by $m_a, M_a$ and $x_{a}$.

\end{lemma}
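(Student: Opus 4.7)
The plan is to execute Ramanujan's asymptotic calculation as a fully explicit inequality. The hypothesis supplies two-sided bounds on $\pi(x)$ valid for $x > x_a$, so substituting $x/e$ into the lower bound forces the precondition $x/e > x_a$, which accounts for the factor $e x_a$ in the conclusion. Everything else is a matter of tracking remainders through the subtraction that already appears as equation (\ref{makemenegative}).

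First I would square the supplied upper bound on $\pi(x)$ to obtain an explicit upper estimate for $\pi^2(x)$. Collecting coefficients of $x^2/\log^j x$ for $j=2,\ldots,6$ reproduces Ramanujan's series with leading coefficients $1,2,5,16,64$, while the $M_a$-cross-terms and the tail contributions of the product can be folded into a single explicit remainder $R_1(x;M_a)$ of size $O(x^2/\log^7 x)$. Next, substituting $x/e$ into the lower bound (valid thanks to $x > e x_a$), multiplying by $ex/\log x$, and expanding each $(\log x - 1)^{-(k+1)}$ as a finite Taylor sum in $1/\log x$ through order $\log^{-6}x$ with an explicit tail estimate, produces a lower bound for $(ex/\log x)\pi(x/e)$ whose first five coefficients are $1,2,5,16,65$. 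The Taylor tails and the $m_a$-term fold into a remainder $R_2(x;m_a)$, again of explicit size $O(x^2/\log^7 x)$.

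Forming the difference $(ex/\log x)\pi(x/e) - \pi^2(x)$, the coefficients at $x^2/\log^j x$ for $j=2,3,4,5$ cancel identically, the coefficient at $x^2/\log^6 x$ is $65-64 = +1$, and the only remaining contribution is bounded below by $-|R_1| - |R_2|$. Ramanujan's inequality therefore follows once
$$\frac{x^2}{\log^6 x} > |R_1(x;M_a)| + |R_2(x;m_a)|,$$
and since every summand on the right carries at least one extra factor of $1/\log x$, this reduces to an explicit one-variable inequality in $\log x$ whose largest solution defines $x_a'$. For $x > \max(e x_a, x_a')$ both preconditions are met and Ramanujan's inequality holds.

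The main obstacle is purely bookkeeping: to avoid contaminating the critical leading coefficient of $+1$ at $x^2/\log^6 x$, the Taylor expansions of $(\log x-1)^{-(k+1)}$ and the square of the truncated series must be carried out with remainders that are honestly of order $\log^{-7} x$ and uniformly controlled in terms of $m_a$, $M_a$ and $x_a$. Once the coefficients and remainders are tabulated cleanly, solving for $x_a'$ from any given triple $(x_a, m_a, M_a)$ is a routine numerical calculation, and this is exactly the template that will be fed the explicit prime-counting estimates in the remainder of the section to produce Theorem \ref{theorem:uncon}.
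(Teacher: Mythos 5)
Your proposal follows the paper's proof essentially step for step: square the upper bound on $\pi(x)$ to get an explicit overestimate of $\pi^2(x)$, substitute $x/e$ into the lower bound (noting this requires $x > e x_a$), expand $(\log x - 1)^{-(k+1)}$ as a truncated geometric series in $1/\log x$ to get an explicit underestimate of $(ex/\log x)\pi(x/e)$, cancel the coefficients at $\log^{-j}x$ for $j=2,\ldots,5$, extract the leading $-x^2/\log^6 x$ from the $65-64$ discrepancy, and absorb all remaining terms into a remainder of order $\log^{-7}x$ whose smallness for $\log x$ large enough defines $x_a'$. The only cosmetic difference is that you carry absolute-value bounds $|R_1|+|R_2|$ where the paper keeps the signed quantity $\epsilon_{M_a}(x) - \epsilon_{m_a}(x)$; this costs nothing and is otherwise the same computation.
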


\begin{proof}
Following along the lines of Ramanujan's proof we have for $x > x_{a}$
\begin{equation} \label{pipi}
\pi^2(x)  <  x^2 \Big\{ \frac{1}{\log^2 x}+ \frac{2}{\log^3 x}+ \frac{5}{\log^4 x}+ \frac{16}{\log^5 x}+ \frac{64}{\log^6 x} + \frac{\epsilon_{M_a}(x)}{\log^7 x} \Big\}, 
\end{equation}
where 
$$\epsilon_{M_a} (x) = 72 + 2 M_a + \frac{2M_a+132}{\log x} + \frac{4M_a+288}{\log^2 x} + \frac{12 M_a+576}{\log^3 x}+\frac{48M_a}{\log^4 x} + \frac{M_a^2}{\log^5 x}.$$

The other term requires slightly more trickery; we have for $x > e x_{a}$
$$\frac{ex}{\log x} \pi \Big(\frac{x}{e} \Big) > \frac{x^2}{\log x} \Big( \sum_{k=0}^{4} \frac{k!}{(\log x - 1)^{k+1}}\Big) + \frac{m_a x}{(\log x-1)^{6}}. $$
We make use of the inequality
\begin{eqnarray*} 
\frac{1}{(\log x - 1)^{k+1}} & = & \frac{1}{\log^{k+1} x} \Big(1+ \frac{1}{\log x} + \frac{1}{\log^2 x} + \frac{1}{\log^3 x} + \cdots \Big)^{k+1} \\ \\
& > & \frac{1}{\log^{k+1} x} \Big(1+ \frac{1}{\log x}+ \cdots + \frac{1}{\log^{5-k} x} \Big)^{k+1}
\end{eqnarray*}
to get
\begin{equation} \label{epi}
\frac{ex}{\log x} \pi \Big(\frac{x}{e} \Big) > x^2 \Big\{ \frac{1}{\log^2 x}+ \frac{2}{\log^3 x}+ \frac{5}{\log^4 x}+ \frac{16}{\log^5 x}+ \frac{64}{\log^6 x} + \frac{\epsilon_{m_a}(x)}{\log^7 x} \Big\}, 
\end{equation}
where
$$\epsilon_{m_a}(x) = 206+m_a+\frac{364}{\log x} + \frac{381}{\log^2 x}+\frac{238}{\log^3 x} + \frac{97}{\log^4 x} + \frac{30}{\log^5 x} + \frac{8}{\log^6 x}.$$
Now, subtracting $(\ref{epi})$ from $(\ref{pipi})$ we have
$$\pi^2(x) - \frac{ex}{\log x} \pi \Big( \frac{x}{e} \Big) < \frac{x^2}{\log^6 x} \Big(-1 + \frac{\epsilon_{M_a} (x) - \epsilon_{m_a} (x)}{\log x} \Big).$$
The right hand side is negative if
$$\log x > \epsilon_{M_a} (x) - \epsilon_{m_a} (x),$$
and so we can then choose $x_a'$ to be some value $x$ which satisfies this.
\end{proof}

The aim is to reduce $\max(ex_{a},x_{a}' )$ so as to get the sharpest bound available using this method and modern estimates involving the prime counting function.  We thus look at developing the explicit bounds on $\pi(x)$ that are required to invoke Lemma \ref{lem1}. We need to call on Corollary 1 of Mossinghoff and Trudgian \cite{trudgianmossinghoff}, which bounds the error in approximating the Chebyshev $\theta$-function with $x$.
\begin{lemma} \label{explicitPNT}
Let
$$\epsilon_0 (x) = \sqrt{\frac{8}{17 \pi}} X^{1/2} e^{-X}, \hspace{0.2in} X= \sqrt{(\log x)/R}, \hspace{0.2in}  R = 6.315.$$
Then
$$|\theta(x) - x | \leq x \epsilon_0(x), \hspace{0.2in} x \geq 149$$
\end{lemma}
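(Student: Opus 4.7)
This lemma is cited verbatim as Corollary 1 of Mossinghoff and Trudgian, so in the actual paper the proof reduces to a single pointer to \cite{trudgianmossinghoff}. However, to describe how one would \emph{derive} a bound of this shape, my plan is as follows. The starting point is an explicit version of the Riemann--von Mangoldt formula, exactly of the kind proved in Chapter 2 as Theorem \ref{explicitformula}:
$$\psi(x) = x - \sum_{|\gamma| \leq T} \frac{x^{\rho}}{\rho} + O^{*}\!\bigg(\frac{2x\log^{2}x}{T}\bigg).$$
One then feeds into this an explicit classical zero-free region of de la Vall\'ee Poussin type, namely $\beta \leq 1 - 1/(R_{0}\log|\gamma|)$ for some explicit $R_{0}$ close to the target value $R = 6.315$. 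This immediately gives $|x^{\rho}| \leq x\exp(-\log x/(R_{0}\log T))$ for every zero in the truncated sum.

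Combining this pointwise bound on $|x^{\rho}|$ with an explicit zero-counting estimate such as Trudgian's $N(T) \leq (T\log T)/(2\pi) + \cdots$ (which also appeared in Chapter 2), and summing $|\rho|^{-1}$ using partial summation, produces an overall bound of the shape
$$|\psi(x) - x| \ll x\,(\log T)\exp\!\bigl(-\log x/(R_{0}\log T)\bigr) + \frac{x\log^{2}x}{T}.$$
The next move is a saddle-point-style optimization: choose $\log T$ so that the two terms balance, which happens near $\log T \asymp \sqrt{\log x / R_{0}}$. This is precisely the choice that produces the composite factor $X^{1/2}e^{-X}$ with $X = \sqrt{(\log x)/R}$ in $\epsilon_{0}(x)$; the leading constant $\sqrt{8/(17\pi)}$ then falls out from a careful numerical optimization of the pieces (rather than any closed-form identity).

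Finally, to pass from $\psi(x)$ to $\theta(x)$, I would invoke an explicit bound of the form $0 \leq \psi(x) - \theta(x) \leq (1 + \varepsilon)\sqrt{x} + O(x^{1/3})$ (compare Lemma \ref{psitheta}). The square-root contribution is strictly smaller than the tail one has already extracted, so it is absorbed into the main error term for $x \geq 149$. The hard part of this program is not conceptual but numerical: it is the simultaneous tuning of $R_{0}$, the truncation $T$, and the explicit constants in the zero-free region and in $N(T)$ so that the final pre-factor comes out as small as $\sqrt{8/(17\pi)}$ with $R = 6.315$. Because that optimization is already carried out in \cite{trudgianmossinghoff}, the cleanest route in the present thesis is simply to quote the result and use it as stated.
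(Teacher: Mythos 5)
Your proposal correctly identifies what the paper actually does: the thesis offers no proof of this lemma at all, only the citation to Corollary~1 of Mossinghoff and Trudgian \cite{trudgianmossinghoff}, so a single pointer is exactly what is expected here. To that extent you and the paper agree.

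The explanatory sketch you append is in the right spirit but is not quite the route Mossinghoff and Trudgian take. They do not truncate the explicit formula at a height $T$ and then optimize $T$ against the $x\log^{2}x/T$ tail; instead they follow the Rosser--Schoenfeld methodology of convolving $\psi$ against a smooth kernel (integrating the explicit formula against a weight of compact support), which removes the $\log^{2}x$ loss entirely and yields the compact $X^{1/2}e^{-X}$ shape and the pre-factor $\sqrt{8/(17\pi)}$ in a fairly structured way, rather than by post-hoc numerical tuning of $T$. The passage from $\psi$ to $\theta$ is also built into their statement rather than patched on afterwards. Since the thesis's proof is literally the citation, none of this affects correctness, but if you intend the sketch as an account of how the cited result is obtained, the smoothing step is the essential ingredient you have left out.
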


This is another form of the Prime Number Theorem, and is able to give us the estimates required to use Lemma \ref{lem1}. For any choice of $a>0$, it is possible to use the Lemma \ref{explicitPNT} to find some $x_a >0$ such that
\begin{equation} \label{chebyshevbound}
| \theta(x) - x | < a \frac{x}{\log^5 x}
\end{equation}
for all $x>x_a$; we simply need to find the range of $x$ for which 
$$\sqrt{\frac{8}{17 \pi}} \bigg( \frac{\log x}{R} \bigg)^{1/4} e^{-\sqrt{(\log x)/R}} <  \frac{a}{\log^5 x}. $$
As this may yield large values of $x_a$, we write $x = e^y$ (also $x_a = e^{y_a}$) and take logarithms to get the equivalent inequality
\begin{equation} \label{solve}
\log \bigg( \frac{1}{R^{1/4} a} \sqrt{\frac{8}{17 \pi}}  \bigg) + \frac{21}{4} \log y \leq \sqrt{\frac{y}{R}}.
\end{equation}

Now, suppose that, for any $a>0$ and some corresponding $x_a>0$ we have
$$\theta(x) < x + a \frac{x}{\log^5 x}$$
for all $x > x_a$. The technique of partial summation gives us that
\begin{eqnarray*}
\pi(x) & < & \frac{x}{\log x} + \int_2^x \frac{dt}{\log^2 t} + a \frac{x}{\log^6 x} + a \int_2^x \frac{dt}{\log^7 t} \\ \\
& < & x \Big( \sum_{k=0}^{4} \frac{k!}{\log^{k+1} x} \Big) + (120+a) \frac{x}{\log^6 x} + (720 +a) \int_2^x \frac{dt}{\log^7 t}.
\end{eqnarray*}
We can estimate the remaining integral here by
\begin{eqnarray*}
\int_2^x \frac{dt}{\log^7 t} & < & \frac{x}{\log^7 x} + 7 \int_2^x \frac{dt}{\log^8 t} \\ \\
& < & \frac{x}{\log^7 x} + 7 \bigg( \int_2^{\sqrt{x}} \frac{dt}{\log^8 t} +  \int_{\sqrt{x}}^{x} \frac{dt}{\log^8 t}\bigg) \\ \\
& < &  \frac{x}{\log^7 x} + 7 \Big( \frac{\sqrt{x}}{\log^8 2} + \frac{2^8 x}{\log^8 x} \Big).
\end{eqnarray*}
Putting it all together we have that
$$\pi(x) < x \Big( \sum_{k=0}^{4} \frac{k!}{\log^{k+1} x} \Big) + M_a \frac{x}{\log^6 x}$$
for all $x>x_a$, where 
\begin{equation} \label{M}
M_a = 120 + a +\frac{a+720}{\log x_a} + \frac{1792 a + 1290240}{\log^2 x_a} + \Big( \frac{5040+7a}{\log^8 2} \Big) \frac{\log^6 x_a}{\sqrt{x_a}}.
\end{equation}

In an almost identical way, we can obtain for $x>x_a$ that
$$\pi(x) > x \Big( \sum_{k=0}^{4} \frac{k!}{\log^{k+1} x} \Big) + m_a \frac{x}{\log^6 x}$$
where
\begin{equation} \label{m}
m_a = 120 -a - \frac{a}{\log x_a} - \frac{1792}{\log^2 x_a} - 2 A \frac{\log^6 x_a}{x_a} - \frac{7 a \log^6 x_a}{\log^8 2 \sqrt{x_a}}
\end{equation}
and
$$A = \sum_{k=1}^{5} \frac{k!}{\log^{k+1} 2} < 1266.1.$$

Now that we have our estimates, we can launch directly into the proof of Theorem \ref{theorem:uncon}. Our method is as follows. We choose some $a>0$ such that we wish for
$$|\theta(x) - x| < a \frac{x}{\log^5 x} $$
to hold for $x>x_a = e^{y_a}$. We simply plug our desired value of $a$ into (\ref{solve}) and use \textsc{Mathematica} to search for some value of $y_a$, such that the inequality holds for all $x > e^{y_a}$.  We then use (\ref{M}) and (\ref{m}) to calculate two values $m_a$ and $M_a$ such that
$$ x \sum_{k=0}^{4} \frac{k!}{\log^{k+1}x}+ \frac{m_a x}{\log^6 x} < \pi(x) < x \sum_{k=0}^{4} \frac{k!}{\log^{k+1}x}+\frac{M_a x}{\log^6 x}$$
holds for $x>e^{y_a}$. Then, by Lemma \ref{lem1}, we find some value $x_{a}' = e^{y_a'}$ (dependent on $a$, $m_a$ and $M_a$, and thus really only on $a$) such that Ramanujan's inequality is true for $x > \max(ex_a, x_a')$. 

One finds that small values of $a$, give rise to large values of $x_a$, yet small values of $x_a'$. Similiarly, large values of $a$ will yield small $x_a$ yet large values of $x_a '$. Of course, we want $x_a$ and $x_a'$ to be comparable, so that we might lower their maximum as much as possible. Thus, the idea is to select $a$ so that $ex_a$ and $x_a'$ are as close as possible.

It follows immediately from the above and Lemma \ref{lem1}, upon choosing $a=3130$, that $x_a = \exp(9393)$ and $x_a' = \exp(9394)$ are suitable values. This gives us Theorem \ref{theorem:uncon}.

\section{Estimates on the Riemann hypothesis}

In this section, we prove Theorem \ref{theorem:con}. We assume the Riemann Hypothesis and can therefore rely on Schoenfeld's \cite{schoenfeld} conditional bound for the prime counting function.
\begin{theorem}\label{theorem:schoen}
Assume the Riemann hypothesis. For $x\geq 2657$ we have
\begin{equation*} \label{schoen}
|\pi(x) - \emph{li}(x) | < \frac{1}{8 \pi} \sqrt{x} \log x.
\end{equation*}
\end{theorem}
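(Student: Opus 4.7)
The plan is to reproduce Schoenfeld's argument, which cascades through the chain $\psi \to \theta \to \pi$ while keeping every numerical constant sharp. I would start from the (untruncated) Riemann--von Mangoldt explicit formula (equation (\ref{RVM}) of Chapter 2), which under the Riemann hypothesis collapses to
$$\psi(x) - x = -\sum_{\rho = 1/2 + i\gamma} \frac{x^{1/2 + i\gamma}}{\rho} - \log 2\pi - \tfrac{1}{2}\log(1 - x^{-2}),$$
so that $|\psi(x) - x|$ is controlled entirely by a weighted sum over ordinates $\gamma$ of the nontrivial zeros, each contributing a term of modulus $x^{1/2}/\sqrt{1/4 + \gamma^2}$. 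The whole task is then to extract the constant $1/(8\pi)$ and the $\log^2 x$ dependence from this sum.

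Next, I would truncate the zero sum at a height $T = T(x)$ chosen to balance the tail contribution against an error term of the shape $O(x \log^2 x / T)$ coming from the truncated version, Theorem \ref{explicitformula}. Using the Riemann--von Mangoldt zero-counting asymptotic $N(T) \sim (T/2\pi)\log T$ together with high-precision numerical verification of RH up to some large height (which pins down the dominant contribution of the low-lying zeros), a careful estimate of $\sum_{|\gamma| \leq T} x^{1/2}/|\rho|$ yields the target inequality $|\psi(x) - x| \leq \sqrt{x}\log^2 x / (8\pi)$ valid down to some explicit threshold (Schoenfeld's $x \geq 73.2$). An elementary bound in the spirit of Lemma \ref{psitheta}, controlling $\psi(x) - \theta(x)$ by $O(\sqrt{x})$, then transplants this inequality to $\theta(x)$, at the cost of raising the threshold modestly (to $x \geq 599$).

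The final step is Abel summation: writing $\pi(x) = \theta(x)/\log x + \int_2^x \theta(t)/(t \log^2 t)\, dt + c$ and matching against the analogous integration-by-parts expansion of $\mathrm{li}(x) = x/\log x + \int_2^x dt/\log^2 t + c'$, one sees that the defects $\theta(x) - x$ and $\theta(t) - t$ feed into $\pi(x) - \mathrm{li}(x)$ smoothed out by an extra division by $\log t$ inside the integral. This gains back one logarithmic factor and converts the $\log^2$ bound on $\theta$ into the $\log$ bound on $\pi$ appearing in the statement, namely $|\pi(x) - \mathrm{li}(x)| < \sqrt{x}\log x / (8\pi)$.

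The real obstacle — and indeed the entire content of Schoenfeld's paper — is keeping the leading constant exactly $1/(8\pi)$ while simultaneously pushing the range of validity down to the relatively small value $x \geq 2657$. Purely asymptotic arguments easily produce a bound of the shape $O(\sqrt{x}\log x)$, but with larger constants, or with thresholds astronomically larger than $2657$. Obtaining the advertised constants requires a delicate numerical input on the low zeros of $\zeta(s)$, an optimal choice of the truncation parameter $T$ as a function of $x$, and a short numerical verification to bridge the gap between the natural analytic threshold and the tight value $x \geq 2657$ stated in Theorem \ref{theorem:schoen}.
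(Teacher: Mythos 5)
The paper does not prove this statement: it is quoted verbatim from Schoenfeld's 1976 paper (\emph{Math.\ Comp.}\ 30, 337--360) and used as an external input, so there is no ``paper's own proof'' to match yours against. With that caveat, your reconstruction correctly captures the broad shape of Schoenfeld's argument --- truncated explicit formula under RH, zero-counting estimates plus numerical data on low zeros to control $\sum_{|\gamma|\le T}|\rho|^{-1}$, a bound on $\psi-\theta$, and partial summation to reach $\pi$ --- and the thresholds $73.2$, $599$, $2657$ you quote are indeed the ones Schoenfeld obtains.

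However, the final partial-summation step as you describe it does not close. Writing
$$\pi(x)-\mathrm{li}(x)=\frac{\theta(x)-x}{\log x}+\int_2^x\frac{\theta(t)-t}{t\log^2 t}\,dt+\frac{2}{\log 2},$$
and feeding in only the crude bound $|\theta(t)-t|<\tfrac{1}{8\pi}\sqrt{t}\log^2 t$, the boundary term already exhausts the entire budget $\tfrac{1}{8\pi}\sqrt{x}\log x$, while the integral contributes an additional $\tfrac{1}{8\pi}\int^x t^{-1/2}\,dt\approx\tfrac{1}{4\pi}\sqrt{x}$. The right-hand side therefore exceeds $\tfrac{1}{8\pi}\sqrt{x}\log x$ by a positive term of order $\sqrt{x}$, so the naive chain $\theta\to\pi$ does \emph{not} preserve the constant $1/(8\pi)$. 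Schoenfeld avoids this either by establishing a strictly sharper intermediate bound on $\theta(x)-x$ (of the shape $\tfrac{1}{8\pi}\sqrt{x}\log x\,(\log x - c)$ rather than $\tfrac{1}{8\pi}\sqrt{x}\log^2 x$) or by working directly with the explicit formula for $\Pi(x)-\mathrm{li}(x)$ in terms of $\sum_\rho\mathrm{li}(x^\rho)$, where the extra factor $1/\log x$ appears inside each term from the start; either way there is a genuine additional idea needed beyond the Abel-summation transfer you sketch, and this is exactly where the stated constant would otherwise be lost.
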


We now aim to improve on Theorem \ref{theorem:hassani} to the extent that a numerical computation to check the remaining cases become feasible. The following result will bridge the gap by showing that Ramanujan's inequality (\ref{inequality}) also holds in the range $1.15 \cdot 10^{16} \leq x \leq 1.39 \cdot 10^{17}$.
\begin{lemma}\label{lem:lowlim}
Assuming the Riemann Hypothesis, we have
$$\pi^2(x)<\frac{ex}{\log x}\pi\left(\frac{x}{e}\right)$$
for all $x\geq 1.15\cdot 10^{16}$.
\end{lemma}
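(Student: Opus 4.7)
The strategy is to leverage Schoenfeld's conditional bound (Theorem \ref{theorem:schoen}) to replace $\pi$ by the smooth logarithmic integral $\mathrm{li}$, reducing the claim to an analytic inequality that can be checked explicitly. Writing $\pi(y) = \mathrm{li}(y) + \delta(y)$ with $|\delta(y)| \leq \sqrt{y}\log y/(8\pi)$ for $y \geq 2657$, a direct expansion of both sides of Ramanujan's inequality shows that $\pi(x)^2 < \frac{ex}{\log x}\pi(x/e)$ follows from the sharper analytic inequality
\[
G(x) := \frac{ex}{\log x}\mathrm{li}(x/e) - \mathrm{li}(x)^2 \;>\; 2\,\mathrm{li}(x)\epsilon(x) + \epsilon(x)^2 + \frac{ex}{\log x}\epsilon(x/e),
\]
where $\epsilon(y) = \sqrt{y}\log y/(8\pi)$. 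Each term on the right is bounded explicitly in closed form, and collectively the error is of size $O(x^{3/2}\log x)$.

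Next, I would compute the asymptotic expansion of $G(x)$ in negative powers of $L = \log x$, extending Ramanujan's calculation that produced equation (\ref{makemenegative}). The combinatorial identity for the coefficient of $u^n = L^{-n}$ in $G(x)/x^2$ is
\[
(n-2)!\sum_{i=0}^{n-2}\frac{1}{i!} - \sum_{j=1}^{n-1}(j-1)!(n-j-1)!,
\]
yielding $G(x)/x^2 = L^{-6} + 14L^{-7} + 145L^{-8} + 1412L^{-9} + \cdots$, plus an explicit tail coming from the standard remainder estimate for the asymptotic series of $\mathrm{li}$. Every coefficient is positive, so the higher-order terms only strengthen the bound.

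The hard part is that the margin is exceedingly tight. At $x \approx 1.39\cdot 10^{17}$ the leading term $x^2/L^6$ and the Schoenfeld error are already of the same order of magnitude; this is essentially what Hassani's Theorem \ref{theorem:hassani} exploits. To push the threshold down by a factor of twelve to $1.15\cdot 10^{16}$, I would retain a generous number of the $c_n x^2/L^n$ contributions to $G$, use the sharper form $\epsilon(x/e) = \sqrt{x/e}(\log x - 1)/(8\pi)$ (the factor $(\log x - 1)/\log x$ is nontrivial in this range), and repackage the resulting inequality as a single explicit inequality $h(L) > 0$ in the one real variable $L$.

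Finally, I would verify $h(L) > 0$ for $L \in [\log(1.15\cdot 10^{16}),\,\log(1.39\cdot 10^{17})]$ by a combination of monotonicity of $h$ (which can be read off from its explicit form once all the remainders are absorbed) and numerical evaluation at the left endpoint, where $\mathrm{li}$ and its powers are known to very high precision. Above the upper endpoint the lemma is exactly Hassani's Theorem \ref{theorem:hassani}, so the two ranges together cover all $x \geq 1.15\cdot 10^{16}$.
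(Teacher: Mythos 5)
Your decomposition $\pi(y)=\mathrm{li}(y)+\delta(y)$ with the symmetric Schoenfeld bound $|\delta(y)|\le\epsilon(y)$ applied to \emph{both} occurrences of $\pi$ is where the argument breaks, and the breakdown is quantitative, not just an issue of needing more terms. From $\pi(x)^2 \le (\mathrm{li}(x)+\epsilon(x))^2$ you pick up the error contribution $2\,\mathrm{li}(x)\epsilon(x)+\epsilon(x)^2$, whose leading piece is
\[
2\,\mathrm{li}(x)\epsilon(x) \sim 2\cdot\frac{x}{\log x}\cdot\frac{\sqrt{x}\log x}{8\pi} = \frac{x^{3/2}}{4\pi}.
\]
At $x=1.15\cdot 10^{16}$ this is roughly $1.0\cdot 10^{23}$, while the main term $G(x)=\frac{ex}{\log x}\mathrm{li}(x/e)-\mathrm{li}(x)^2$ is only about $7.9\cdot 10^{22}$ (one can back this out from the paper's reported value $g(1.15\cdot10^{16})<-3.2\cdot10^{19}$). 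So this single error term already exceeds the entire main term, before you even add the inevitable $\frac{ex}{\log x}\epsilon(x/e)$ from the other side. No amount of retaining higher-order positive coefficients of $G(x)$ or sharpening $(\log x - 1)/\log x$ can recover a factor of two here, because the deficit sits in the leading orders.

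The ingredient you are missing is the Platt--Trudgian verification that $\pi(x)<\mathrm{li}(x)$ unconditionally for all $x\le 1.39\cdot 10^{17}$. Since Hassani's Theorem~\ref{theorem:hassani} already covers $x\ge 1.39\cdot 10^{17}$, the lemma only needs to be established on $[1.15\cdot 10^{16},\,1.39\cdot 10^{17}]$, and on that range one has the \emph{one-sided} inequality $\pi(x)^2<\mathrm{li}(x)^2$ with no error term at all. Schoenfeld is then used only once, as a lower bound on $\pi(x/e)$, contributing the single error $\frac{ex}{\log x}\cdot\frac{1}{8\pi}\sqrt{x/e}(\log x - 1)$. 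It is precisely the elimination of $2\,\mathrm{li}(x)\epsilon(x)$ via Platt--Trudgian that makes $g(x)<0$ squeak through at the endpoint, and the paper then closes the interval by a monotonicity argument on $g$ rather than by an asymptotic expansion in powers of $1/\log x$.
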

\begin{proof}
Due to the work of Hassani, we really only need to prove this for the range $1.15 \cdot 10^{16} \leq x \leq 1.39 \cdot 10^{17}$. Platt and Trudgian \cite{platttrudgian} have recently confirmed that $\pi(x)<\text{li}(x)$ holds for $x \leq 1.39 \cdot10^{17}$. Together with Theorem \ref{theorem:schoen} we see that
$$f(x)=\pi^2(x) - \frac{ex}{\log x} \pi \Big( \frac{x}{e} \Big)$$
is bounded above by
$$g(x) =  \text{li}^2(x) - \frac{ex}{\log x} \bigg( \text{li}\Big(\frac{x}{e} \Big) - \frac{1}{8 \pi} \sqrt{\frac{x}{e}} (\log x - 1) \bigg) $$
for all $x \geq 2657 e$. Berndt \cite[Pg.\ 114]{berndt} uses some elementary calculus to show that a similar function to the above is monotonically increasing over some range. One can use that same technique here to show that $g(x)$ is monotonically decreasing for all $x \geq 10^{16}$, as the derivative of $g(x)$ is straightforward to compute. Then, \textsc{Mathematica} can be used to show that
$$g(1.15 \cdot 10^{16}) < -3.2 \cdot 10^{19} <0$$
and thus $g(x)$ is negative for all $x \geq 1.15 \cdot 10^{16}$. 
\end{proof}

Finally, we wish to show by computation that there are no counterexamples to (\ref{inequality}) in the interval $[10^{11}, 1.15 \cdot 10^{16}]$. As before, we write
$$f(x) = \pi^2(x) - \frac{ex}{\log x} \pi\bigg( \frac{x}{e} \bigg).$$
Note that $f$ is strictly decreasing between primes, so we could simply check that $f(p)<0$ for all primes $p$ in the required range. However, there are roughly $3.2\cdot 10^{14}$ primes to consider\footnote{Or precisely $319,870,505,122,591$.} and this many evaluations of $f$ would be computationally too expensive. Instead, we employ a simple stepping argument.
\begin{lemma}\label{lem:step}
Let $x_0$ be in the interval $[10^{11},1.15 \cdot 10^{16}]$ with $f(x_0) < 0$. Set $\epsilon=\sqrt{\pi^2(x_0)-f(x_0)}-\pi(x_0)$. Then $f(x) < 0$ for all $x\in[x_0, x_0 + \epsilon]$.
\end{lemma}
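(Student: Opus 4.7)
The plan is to exploit the monotonicity of $\tfrac{ex}{\log x}\pi(x/e)$ and to show that the definition of $\epsilon$ is precisely calibrated so that the worst-case growth of $\pi^2(x)$ on the interval $[x_0,x_0+\epsilon]$ cannot overtake the slack $|f(x_0)|$ that we start with.

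First I would bound $\pi(x)$ from above. For $x\in[x_0,x_0+\epsilon]$, any prime counted by $\pi(x)-\pi(x_0)$ is a distinct positive integer lying in $(x_0,x]$, so the count is at most $x-x_0\le\epsilon$. (In the computational application one takes $x_0$ at integer values where this is clean; for general $x_0$ the bound costs at most $+1$, which is negligible relative to the size of $\epsilon$ in the stated range.) Squaring yields
\begin{equation*}
\pi^2(x)\le(\pi(x_0)+\epsilon)^2=\pi^2(x_0)-f(x_0),
\end{equation*}
where the last equality is exactly the definition of $\epsilon$.

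Next I would bound $\tfrac{ex}{\log x}\pi(x/e)$ from below. Both factors are non-decreasing for $x>e$ (in fact $\tfrac{ex}{\log x}$ is strictly increasing there), so
\begin{equation*}
\frac{ex}{\log x}\pi\!\left(\tfrac{x}{e}\right)\ \ge\ \frac{ex_0}{\log x_0}\pi\!\left(\tfrac{x_0}{e}\right)\ =\ \pi^2(x_0)-f(x_0).
\end{equation*}
Subtracting the two inequalities gives $f(x)\le 0$. To promote this to the strict inequality claimed, split into cases: at $x=x_0$ the conclusion is the hypothesis; for $x>x_0$, the monotonicity inequality for $\tfrac{ex}{\log x}\pi(x/e)$ is strict (the factor $\tfrac{ex}{\log x}$ is strictly increasing, and $\pi(x_0/e)\ge 1$ since $x_0\ge 10^{11}$), which forces $f(x)<0$.

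The only mild obstacle is the counting step: strictly speaking, $\pi(x)-\pi(x_0)$ is at most the number of integers in $(x_0,x]$, which can exceed $x-x_0$ by at most $1$ when $x_0\notin\mathbb{Z}$. This is harmless for the intended use, where $x_0$ is an integer and $\epsilon$ is many orders of magnitude larger than $1$; one can either state the lemma for integer $x_0$ or absorb the $+1$ into an obvious rounding. Everything else is just algebraic rearrangement, so once the bound $\pi(x)\le\pi(x_0)+\epsilon$ is in hand the lemma follows immediately from the defining relation $(\pi(x_0)+\epsilon)^2=\pi^2(x_0)-f(x_0)$ and monotonicity.
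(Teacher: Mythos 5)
Your proposal is correct and uses the same argument as the paper: bound $\pi(x)$ above by $\pi(x_0)+(x-x_0)$, bound $\tfrac{ex}{\log x}\pi(x/e)$ below by its value at $x_0$ via monotonicity, and observe that $\epsilon$ is defined precisely as the positive root of the resulting quadratic $\epsilon^2+2\pi(x_0)\epsilon+f(x_0)=0$. You are slightly more careful than the paper about the strict inequality at the endpoint and about requiring $x_0$ to be an integer for the counting step, but these are cosmetic refinements of the identical underlying proof.
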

\begin{proof}
We have for $x_0\geq e$ and $\epsilon>0$
\begin{eqnarray*}
f(x_0+\epsilon) & = & \pi^2(x_0+\epsilon)-\frac{e(x_0+\epsilon)}{\log(x_0+\epsilon)}\pi\left(\frac{x_0+\epsilon}{e}\right)\\
& \leq & \pi^2(x_0)+2\pi(x_0)\epsilon+\epsilon^2-\frac{ex_0}{\log x_0}\pi\left(\frac{x_0}{e}\right)\\
& = & f(x_0)+2\pi(x_0)\epsilon+\epsilon^2.
\end{eqnarray*}
Setting $f(x_0+\epsilon)=0$ and solving the resulting quadratic in $\epsilon$ gives us our lemma.
\end{proof}

Suppose we have access to a table containing values of $\pi(x_i)$ with $x_{i+1}>x_i$ for all $i$. Then we can compute an interval containing $\pi(x)$ simply by looking up $\pi(x_i)$ and $\pi(x_{i+1})$ where $x_i\leq x \leq x_{i+1}$. Repeating this for $x/e$ we can determine an interval $[a,b]$ for which $f(x)$ must lie. Assuming $b$ is negative, we can use Lemma \ref{lem:step} to step to the next value of $x$ and repeat.

Oliviera e Silva has produced extensive tables of $\pi(x)$ \cite{Oliviera2012}. Unfortunately, these are not of a sufficiently fine granularity to support the algorithm outlined above. In other words the estimates on $\pi(x)$ and $\pi(x/e)$ we can derive from these tables alone are too imprecise and do not determine the sign of $f(x)$ uniquely. We looked at the possibility of refining the coarse intervals provided by these tables using Montgomery and Vaughan's explicit version of the Brun--Titchmarsh \cite{MV} theorem but to no avail. Instead, we (or, more specifically, Dave Platt) re-sieved the range $[1\cdot 10^{10},1.15\cdot 10^{16}]$ to produce exact values for $\pi(x_i)$ where the $x_i$ were more closely spaced. Table \ref{tab:pixs} provides the details.\footnote{At the same time we double checked Oliviera e Silva's computations and, as expected, we found no discrepancies.}

\begin{table}[ht] 
\caption{The Prime Sieving Parameters} 
\label{tab:pixs} 
\centering 
\begin{tabular}{c c c} 
\hline\hline 
From & To & Spacing\\[0.5 ex] \hline
$10^{10}$ & $10^{11}$ & $10^3$\\  
$10^{11}$ & $10^{12}$ & $10^4$\\  
$10^{12}$ & $10^{13}$ & $10^5$\\  
$10^{13}$ & $10^{14}$ & $10^6$\\  
$10^{14}$ & $10^{15}$ & $10^7$\\  
$10^{15}$ & $10^{16}$ & $10^8$\\  
$10^{16}$ & $1.15\cdot 10^{16}$ & $10^9$\\  
\hline\hline 
\end{tabular} 
\end{table} 

We used Kim Walisch's \textit{primesieve} package \cite{Walisch2012} to perform the sieving and it required a total of about $300$ hours running on nodes of the University of Bristol's Bluecrystal cluster \cite{ACRC2014}.\footnote{Each node comprises two $8$ core Intel\textsuperscript{\circledR} Xeon\textsuperscript{\circledR} E5-2670 CPUs running at $2.6$ GHz and we ran with one thread per core.}

Using Lemma \ref{lem:step} with these tables, actually confirming that $f(x)<0$ for all $x\in[1\cdot 10^{11},1.15\cdot 10^{16}]$ took less than $5$ minutes on a single core. We had to step (and therefore compute $f(x)$) about $5.3\cdot 10^8$ times to span this range. We sampled at every $100,000$th step and Figure \ref{fig:plot} shows a log/log plot of $x$ against $-f(x)$ for these samples.\footnote{Actually, we use the midpoint of the interval computed for $-f(x)$.}

\begin{figure}[tbp]
\centering
\fbox{\includegraphics[width=0.45\linewidth,angle=270]{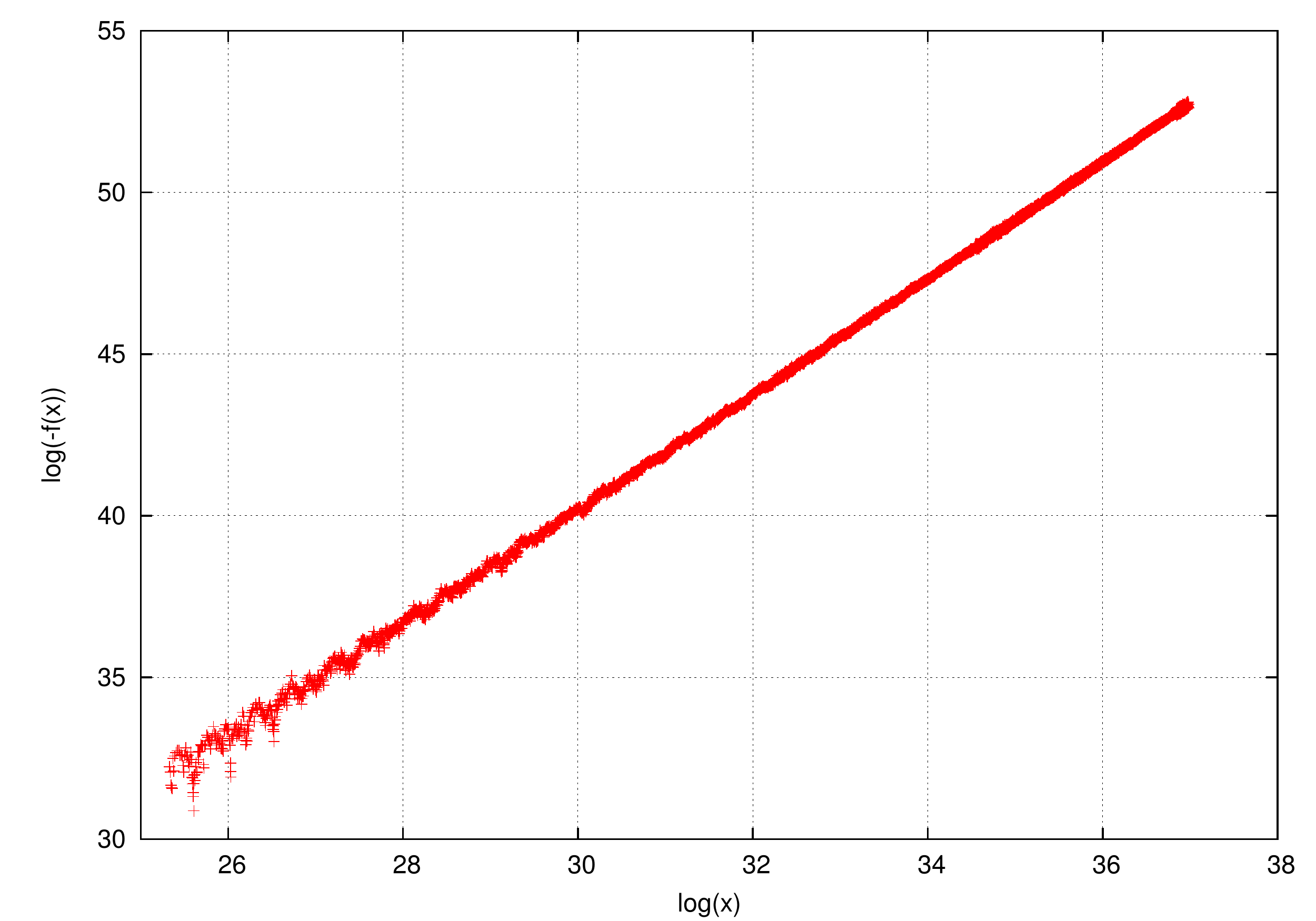}}
\caption{$\log(x)$ vs. $\log(-f(x))$.}
\label{fig:plot}
\end{figure}

No counterexamples to (\ref{inequality}) where uncovered by this computation and so we can now state that there are no counterexamples in the interval $[10^{11}, 1.15 \cdot 10^{16}]$. This concludes the proof of Theorem \ref{theorem:con}, and so ends this chapter of work. The next chapter contains a good assortment of open problems for the reader to investigate, including a suggested generalisation of the work contained in this chapter.


\chapter{An Offering of Open Problems}
\label{chapter6}

\begin{quote}
\textit{``I almost wish I hadn't gone down that rabbit-hole -- and yet -- and yet -- it's rather curious, you know, this sort of life!''}
\end{quote}

Whilst solving the problems that made this thesis, the author came across a suite of related problems. Within this chapter, we offer these to the avid reader in the hope that interest in the area of explicit methods in number theory burgeons.

Certainly, one should adopt the practice of searching for new problems every once in a while. Besides the standard citation searches offered by the likes of MathSciNet and Google Scholar, there is a lot to be gained by sifting through the information on Wikipedia, Wolfram MathWorld, StackExchange and MathOverflow. Of course, these latter sources offer not peer-reviewed research, but a community dialogue for one to look through, and it is surprising how much can be gained from this.

For example, the inequality of Ramanujan studied in Chapter 5 was first found on the Wolfram MathWorld page \cite{mathworld} for the prime counting function. And similarly, it would be unwise for one working on the Riemann hypothesis to avoid the Wikipedia page \cite{wiki} on this topic for reasons of scholarly lewdness. 

The author also found Guy's book of unsolved problems \cite{guy}, Murty's book of exercises \cite{murty} and the Mitrinovi\'{c}--S\'{a}ndor--Crstici handbook \cite{sandor} to be excellent companions to the working analytic number theorist.

\begin{problem}
Consider the explicit version of the truncated Riemann--von Mangoldt explicit formula developed in Chapter 2. Wolke \cite{wolke} has established a (not explicit) version of this formula with an error term that is
$$O\bigg(  \frac{x}{T} \frac{ \log x}{\log (x/T)} \bigg).$$
It would be interesting to make this explicit. 
\end{problem}

\begin{problem}
Using an explicit version of Wolke's formula, one could estimate the sum over the zeroes using Ramar\'{e}'s explicit zero-density estimate \cite{ramare} and the Mossinghoff--Trudgian zero-free region \cite{trudgianmossinghoff}. This would give an explicit bound for $|\psi(x)-x|$ that might improve that of Mossinghoff and Trudgian.
\end{problem}

\begin{problem} \label{B}
In his paper, Wolke \cite{wolke} shows how his explicit formula can be used to prove Cram\'{e}r's theorem that there is a prime in the interval $(x-c \sqrt{x} \log x, x)$ for some $c>0$ and sufficiently large $x$. It would be interesting to see if Wolke's method, made explicit, would yield better estimates for $c$ than those given in Chapter 3 of this thesis.
\end{problem}

\begin{problem}
It would be interesting to see the explicit version of the truncated Riemann--von Mangoldt explicit formula proven in a faster way. Specifically, can one write the sum over the zeroes as
$$\sum_{\rho} \frac{x^{\rho}}{\rho} = \sum_{|\gamma| \leq T} \frac{x^{\rho}}{\rho} + \sum_{|\gamma| > T} \frac{x^{\rho}}{\rho} $$
and directly bound the rightmost sum?
\end{problem}

\begin{problem}
The Riemann--von Mangoldt explicit formula has been used in various applications. It would be interesting to see which of these applications can now be reworked explicitly using Theorem \ref{explicitformula}.
\end{problem}

\begin{problem}
It would be interesting to see if other explicit formulas can produce better bounds on primes between cubes. The author attempted an investigation of this using $\psi_1(x)$, but was not able to get primes between cubes using Ford's zero-free region and Ramar\'{e}'s zero-density estimate (one can, however, still get short interval results).
\end{problem}

\begin{problem}
Although proving that there is a prime between any two consecutive cubes seems out of reach (at least for this author), one could try to prove the existence of almost-primes between cubes. There are some details on this in Ivi\'{c}'s book \cite[Ch.\ 12.7]{ivicbook}.
\end{problem}

\begin{problem} \label{additive}
In Chapter 4, we make two theorems in additive number theory completely explicit. There are many similar problems that one could look at. For example, fix $a, q \in \mathbb{N}$ such that $(a,q)=1$. One could attempt to show that every large enough positive integer $n$ could be written as
$$n = p + m$$
where $p$ is a prime such that $p \equiv a \mod q$ and $m$ is square-free.
\end{problem}

\begin{problem}
One could prove that every positive integer is the \textit{difference} of a prime and a square-free number. 
\end{problem}

\begin{problem}
Given some positive integer $K$, explicitly determine a constant $C(k)$ such that every even integer $n>C(k)$ can be written in the form
$$n = p_1 + p_2 + 2^{a_1} + 2^{a_2} + \cdots + 2^{a_r}$$ 
where $p_1$ and $p_2$ are primes, $a_i$ is a positive integer for all $i$ and $r \leq K$.
\end{problem}

\begin{problem}
In Erd\H{o}s' paper \cite{erdos}, he also shows that, in the case where $n\equiv 1 \mod 4$ and is sufficiently large, $n$ may be written in the form $n = 4p^2+f$ where $p$ is a prime and $f$ is square-free. I am not sure why Dave Platt and I walked around this problem, but one could surely resolve it similarly to the case where $n \not \equiv 1 \mod 4$.
\end{problem}

\begin{problem}
Prove that every integer (in the appropriate congruence classes and from some number onwards) can be written as the sum of a $k$-th power of a prime and an $l$-free number (see Rao \cite{rao} for a proof that every sufficiently large integer can be so written).
\end{problem}

\begin{problem} 
In Chapter 4, we showed how Platt's \cite{Platt2013} work on verifying GRH up to certain heights could be used to improve the bounds on the Prime Number Theorem in arithmetic progressions. It would be very useful if one were to rework the entire paper of Ramar\'{e} and Rumely \cite{ramarerumely} using this verification.
\end{problem}

\begin{problem} 
It would be interesting to see if the inequality of Ramanujan that we studied in Chapter 5 could be suitably generalised. For example, Ramanujan considered two expressions involving $\pi(x)$ that agreed for the first four terms of their series expansions but not thereafter. Given some integer $m$, would it be possible to contrive two expressions involving $\pi(x)$ that agree for the first $m$ terms but not thereafter? 

Note that Hassani \cite{hassanigen} has generalised the problem in a different way, which might also be of interest.
\end{problem}

The above problems are few in a field of many, but these are certainly the ones of most interest to the author.


\newpage
  \appendix



  \addcontentsline{toc}{chapter}{Bibliography}


\nocite{*}

\bibliographystyle{plain}

\bibliography{biblio}

\end{document}